\documentclass[11pt,oneside,reqno]{amsart}
\usepackage[a4paper,left=24mm,right=24mm,marginparwidth=18mm,top=28mm,bottom=34mm]{geometry}
\usepackage[all]{xy}
\usepackage[x11names]{xcolor}
\usepackage{amsmath}
\usepackage{amssymb}
\usepackage{amsthm}
\usepackage{extarrows}
\usepackage[backend=biber, style=alphabetic, maxnames=5, sorting=nyt,backref=true,doi=false,isbn=false]{biblatex}
 \AtEveryBibitem{\clearfield{url}}
 \AtEveryBibitem{\clearlist{language}}

\AtBeginBibliography{\setlength{\emergencystretch}{1em}}
\usepackage[colorlinks,linkcolor=Brown4, citecolor=Brown4]{hyperref}
\usepackage{url}
\usepackage{xurl}

\usepackage{stmaryrd,dsfont,mathrsfs}
\usepackage{MnSymbol}
\usepackage{mathtools}
\usepackage{bbm}
\usepackage[shortlabels]{enumitem}
\setlist[enumerate]{leftmargin=*}
\setlist[itemize]{labelsep=\parindent, leftmargin=2\parindent}
\usepackage{tikz-cd}
\usepackage{aliascnt}
\usepackage[nameinlink]{cleveref}
\usepackage[indent]{parskip}
\DeclarePairedDelimiterX\Set[1]\lbrace\rbrace{\def\given{\;\delimsize\vert\;}#1}

\usepackage{marginnote}

\newtheorem{thm}{Theorem}[section]
\crefname{thm}{theorem}{theorems}
\Crefname{thm}{Theorem}{Theorems}

\newtheorem{mainthm}{Theorem}

\crefname{mainthm}{theorem}{theorems}
\Crefname{mainthm}{Theorem}{Theorems}

\newaliascnt{lem}{thm}
\newtheorem{lem}[lem]{Lemma}
\aliascntresetthe{lem}
\crefname{lem}{lemma}{lemmas}
\Crefname{lem}{Lemma}{Lemmas}

\newaliascnt{cor}{thm}
\newtheorem{cor}[cor]{Corollary}
\aliascntresetthe{cor}
\crefname{cor}{corollary}{corollaries}
\Crefname{cor}{Corollary}{Corollaries}

\newaliascnt{prop}{thm}
\newtheorem{prop}[prop]{Proposition}
\aliascntresetthe{prop}
\crefname{prop}{proposition}{propositions}
\Crefname{prop}{Proposition}{Propositions}

\newaliascnt{conj}{thm}
\newtheorem{conj}[conj]{Conjecture}
\aliascntresetthe{conj}
\crefname{conj}{conjecture}{conjectures}
\Crefname{conj}{Conjecture}{Conjectures}

\newaliascnt{claim}{thm}

\aliascntresetthe{claim}
\crefname{claim}{claim}{claims}
\Crefname{claim}{Claim}{Claims}

\theoremstyle{definition}

\newaliascnt{ex}{thm}
\newtheorem{ex}[ex]{Example}
\aliascntresetthe{ex}
\crefname{ex}{example}{examples}
\Crefname{ex}{Example}{Examples}

\newaliascnt{ass}{thm}

\aliascntresetthe{ass}
\crefname{ass}{assumption}{assumptions}
\Crefname{ass}{Assumption}{Assumptions}

\newaliascnt{setup}{thm}

\aliascntresetthe{setup}
\crefname{setup}{set-up}{set-ups}
\Crefname{setup}{Set-up}{Set-ups}

\newaliascnt{notation}{thm}

\aliascntresetthe{notation}
\crefname{notation}{notation}{notations}
\Crefname{notation}{Notation}{Notations}

\newaliascnt{rem}{thm}
\newtheorem{rem}[rem]{Remark}
\aliascntresetthe{rem}
\crefname{rem}{remark}{remarks}
\Crefname{rem}{Remark}{Remarks}

\newaliascnt{defn}{thm}
\newtheorem{defn}[defn]{Definition}
\aliascntresetthe{defn}
\crefname{defn}{definition}{definitions}
\Crefname{defn}{Definition}{Definitions}

\newaliascnt{prob}{thm}

\aliascntresetthe{prob}
\crefname{prob}{problem}{problems}
\Crefname{prob}{Problem}{Problems}

\newcommand{\A}{{\mathds A}}
\newcommand{\F}{{\mathds F}}

\newcommand{\Q}{{\mathds Q}}

\newcommand{\Z}{{\mathds Z}}
\newcommand{\C}{{\mathds C}}

\newcommand\End{\mathop{\mathrm{End}}\nolimits}

\newcommand\gr{\text{\rm gr}}

\newcommand\Aut{\mathop{\mathrm{Aut}}\nolimits}

\newcommand\Cl{\mathop{\mathrm{Cl}}\nolimits}

\newcommand\Frob{\mathop{\mathrm{Frob}}\nolimits}

\newcommand\Gal{\mathop{\mathrm{Gal}}\nolimits}
\newcommand\Lie{\mathop{\mathrm{Lie}}\nolimits}
\newcommand\Hom{\mathop{\mathrm{Hom}}\nolimits}

\newcommand\GL{\mathop{\mathrm{GL}}\nolimits}

\renewcommand\ge{\mathop{\mathfrak{g}}\nolimits}

\newcommand{\fsl}{\mathop{\mathfrak{sl}}\nolimits}

\newcommand\Spin{\mathop{\mathrm{Spin}}\nolimits}
\newcommand\GSpin{\mathop{\mathrm{GSpin}}\nolimits}
\newcommand\SO{\mathop{\mathrm{SO}}\nolimits}

\newcommand\Sym{\mathop{\mathrm{Sym}}\nolimits}

\newcommand\Pic{\mathop{\mathrm{Pic}}\nolimits}

\newcommand\Rep{\mathrm{Rep}}

\newcommand\Spec{\mathop{\rm Spec}}

\newcommand\KS{\mathop{\mathrm{KS}}\nolimits}

\newcommand\K{\mathop{\mathrm{K}}\nolimits}

\newcommand{\et}{\mathrm{\acute{e}t}}

\DeclareMathOperator{\ad}{ad}

\newcommand\Vect{\mathop{\mathrm{Vect}}\nolimits}

\newcommand{\hk}{hyper-Kähler~}

\DeclareMathOperator{\Ad}{Ad}
\newcommand{\isomto}{\stackrel{\textstyle\sim}{\smash{\longrightarrow}\rule{0pt}{0.4ex}}}

\newcommand{\rR}{\mathrm{R}}

\newcommand{\rG}{\mathrm{G}}

\newcommand{\bG}{\mathbf{G}}

\newcommand{\GG}{\mathbb{G}}

\newcommand{\cO}{\mathcal{O}}

\newcommand{\scrL}{\mathscr{L}}

\DeclareMathOperator{\pst}{pst}
\DeclareMathOperator{\st}{st}

\DeclareMathOperator{\MT}{MT}
\DeclareMathOperator{\ur}{ur}

\newcommand{\OGsix}{\mathrm{OG6}}
\newcommand{\OGten}{\mathrm{OG10}}

\newcommand{\integ}{\mathrm{int}}

\newcommand{\cG}{\mathcal{G}}

\newcommand{\tisoto}{%
  \mathrel{\overset{\raisebox{-0.35ex}{$\scriptstyle\sim$}}{\rightarrow}}%
}

\newcommand{\hidden}[1]{}

\numberwithin{equation}{section}

\makeatletter
\@namedef{subjclassname@2020}{\textup{2020} Mathematics Subject Classification}
\makeatother

\title[Semisimplicity, Purity and Mumford--Tate Conjecture]{Semisimplicity, Purity and Mumford--Tate Conjecture for hyper-K\"ahler Varieties}
\date{October 6, 2026}
\author[K.~Ito]{Kazuhiro Ito}
\address{Mathematical Institute, Tohoku University, 6-3, Aoba, Aramaki, Aoba-Ku, Sendai, 980-8578, Japan}
\email{kazuhiro.ito.c3@tohoku.ac.jp}

\author[H.~Zou]{Haitao Zou}
\address{Universität Bielefeld, Universitätsstraße 25, 33615 Bielefeld, Germany}
\email{hzou@math.uni-bielefeld.de}

\subjclass[2020]{Primary 14J42; Secondary 14G20, 14F20, 14F30}
\keywords{hyper-K\"ahler varieties, semisimplicity, Mumford--Tate conjecture, weight--monodromy conjecture, Weil--Deligne representations}

\begin{document}

\begin{abstract}
We prove the Mumford--Tate conjecture in every degree for hyper-K\"ahler varieties over fields finitely generated over $\Q$.
The proof establishes semisimplicity of $\ell$-adic cohomology by eliminating the unipotent radical of the algebraic monodromy group of total cohomology.
We also prove the weight--monodromy conjecture for hyper-K\"ahler varieties over $p$-adic fields.
For hyper-K\"ahler varieties over number fields with $b_2\geq4$, we establish, after suitable finite extensions, strong compatibility of the associated Weil--Deligne representations valued in Mumford--Tate groups and its integral refinement away from finitely many primes.
\end{abstract}

\maketitle

\section{Introduction}

\subsection{The Mumford--Tate conjecture and semisimplicity}
The Mumford--Tate conjecture relates the $\ell$-adic Galois
representations attached to a smooth projective variety in characteristic zero
to the Hodge structures on its Betti cohomology.
More precisely, let $X$ be a smooth projective variety over a field $F$ finitely generated over $\Q$.
Let $\overline{F}$ be an algebraic closure of $F$ and fix an embedding $\overline F\hookrightarrow\C$.
For a prime $\ell$, we write $\rG_{\ell,i}(X)$ for the algebraic monodromy group, that is, the Zariski closure of the image of the representation
\[
G_F=\Gal(\overline F/F) \longrightarrow \GL(H^i_{\et}(X_{\overline F},\Q_\ell)),
\]
and $\MT_i(X_\C)$ for the Mumford--Tate group of $H^i_B(X_\C,\Q)$.
The Mumford--Tate conjecture predicts that, under the \'etale--Betti comparison,
\[
\rG_{\ell,i}(X)^{\circ} = \MT_i(X_{\C})_{\Q_\ell}
\]
for every $i$ and $\ell$.
This conjecture was originally formulated for abelian varieties and remains open in general, even in that case; see \cite{Moonen17} and the references therein for an overview.

In this paper, we prove the Mumford--Tate conjecture in every degree for hyper-K\"ahler varieties.
Here a \emph{hyper-K\"ahler variety} over a field $K$ of characteristic zero means a smooth projective geometrically simply connected variety $X$ with $H^0(X,\Omega^2_{X/K})=K\sigma$ for a nowhere-degenerate $2$-form $\sigma$.
Hyper-K\"ahler varieties are higher-dimensional analogues of K3 surfaces and occur among the factors in the Beauville--Bogomolov decomposition \cite{Bogomolov,Beauville83}. For K3 surfaces, the Mumford--Tate conjecture was proved by Tankeev \cite{TankeevK3MTC}.
Andr\'e \cite{Andre96b} proved it in degree two for hyper-K\"ahler varieties with $b_2(X)\geq4$.
In all degrees, the conjecture was known for the four standard deformation types, $K3^{[n]}$, generalized Kummer, $\OGsix$, and $\OGten$, by work of Floccari, Soldatenkov, and Floccari--Fu--Zhang \cite{FloccariMumfordTate,Soldatenkov2022a,FFZ}. 

Tang and the second author proved the Mumford--Tate conjecture for arbitrary hyper-K\"ahler varieties after semisimplifying the Galois representations \cite{TZ26}.
The remaining problem is therefore to establish semisimplicity, or equivalently to prove that the unipotent radical of the algebraic monodromy group is trivial, which is predicted by the \emph{semisimplicity conjecture} of Grothendieck and Serre \cite{Grothendieck,SerreMotivic1994}.

We prove a general restriction on the possible unipotent radicals of algebraic monodromy groups arising from geometry: the connected algebraic monodromy group of a smooth proper variety over $F$ has no nontrivial unipotent algebraic quotient (\Cref{thm:no-unipotent quotient for geometric representation}).
For a hyper-K\"ahler variety $X$ over $F$, write $\rG_\ell(X)$ and $\MT(X_\C)$ for the algebraic monodromy and Mumford--Tate groups of total cohomology, respectively, and $\rR_{u,\ell}$ for the unipotent radical of the identity component $\rG_\ell(X)^\circ$.
The structural results of \cite{TZ26} identify $\MT(X_\C)_{\Q_\ell}$ with a Levi subgroup centralizing $\rR_{u,\ell}$, giving
\[
\rG_\ell(X)^\circ
=\MT(X_\C)_{\Q_\ell}\times\rR_{u,\ell}.
\]
The unipotent radical is therefore a quotient of $\rG_\ell(X)^\circ$ and must be trivial by the preceding result.
This proves semisimplicity of the $\ell$-adic cohomology, thereby completing the proof of the Mumford--Tate conjecture for hyper-K\"ahler varieties.
More precisely, we prove the following theorem.

\begin{mainthm}[\Cref{thm: MT conjecture}]
\label{mainthm:Mumford-Tate conjecture for HK}
\label{mainthm:semisimplicity conjecture}
Let $X$ be a hyper-K\"ahler variety over a field $F$ finitely generated over $\Q$.
Then $H^i_{\et}(X_{\overline F},\Q_\ell)$ is a semisimple $G_F$-representation for every $i$ and every prime $\ell$, and the Mumford--Tate conjecture holds for $X$ in every degree.
\end{mainthm}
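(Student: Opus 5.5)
The plan follows the outline sketched just before the statement. We reduce to total cohomology and show that the unipotent radical $\rR_{u,\ell}$ of $\rG_\ell(X)^\circ$ is trivial. Then we combine this with the results of \cite{TZ26}, which give the Mumford--Tate conjecture after semisimplification. Since $H^*=\bigoplus_i H^i$, a representation of $G_F$ on $H^*_{\et}(X_{\overline F},\Q_\ell)$ is semisimple if and only if each summand is. Likewise, $\rG_{\ell,i}(X)^\circ$ and $\MT_i(X_\C)$ are the images of $\rG_\ell(X)^\circ$ and $\MT(X_\C)$ under the projection to $\GL(H^i)$. So it suffices to prove $\rG_\ell(X)^\circ=\MT(X_\C)_{\Q_\ell}$ compatibly with the comparison isomorphism, and that $H^*$ is semisimple. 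We may also replace $F$ by a finite extension so that $\rG_\ell(X)$ is connected; semisimplicity is insensitive to finite-index subgroups in characteristic zero.

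\textbf{Step 1: the product decomposition.} Invoke the structural results of \cite{TZ26}. They give a Levi decomposition
\[
\rG_\ell(X)^\circ=L\ltimes\rR_{u,\ell},
\]
in which $L$ is identified under the comparison with $\MT(X_\C)_{\Q_\ell}$; this is the semisimplified Mumford--Tate conjecture. They moreover show that $L$ centralizes $\rR_{u,\ell}$, so the semidirect product is direct:
\[
\rG_\ell(X)^\circ=\MT(X_\C)_{\Q_\ell}\times\rR_{u,\ell}.
\]
I expect the centralizing statement to come from the Looijenga--Lunts--Verbitsky Lie algebra acting on $H^*$. That algebra is contained in the Lie algebra of $\MT$ up to the grading and Lefschetz parts, and the idea is that any unipotent part commuting with the relevant $\mathfrak{so}$-action must be central modulo $L$. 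Here we take it as given from \cite{TZ26}. It follows that the projection
\[
\rG_\ell(X)^\circ\twoheadrightarrow\rR_{u,\ell}
\]
is a unipotent algebraic quotient of $\rG_\ell(X)^\circ$.

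\textbf{Step 2: no unipotent quotients.} Apply \Cref{thm:no-unipotent quotient for geometric representation} to the smooth proper variety $X$. It says that the connected monodromy group of its cohomology admits no nontrivial unipotent quotient, so $\rR_{u,\ell}=1$. This step is the main obstacle; if I had to prove that theorem myself, here is what I would try. A nontrivial unipotent quotient has a $\mathbb{G}_a$ quotient, which gives a nonzero homomorphism $G_{F'}\to\Q_\ell$ factoring through the monodromy group of $H^*$. Such a homomorphism defines an extension of $\Q_\ell$ by $\Q_\ell$ built from a subquotient of a tensor construction on $H^*$. I would then specialize to a number field and use Frobenius weights. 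The semisimple parts of the Frobenius elements lie in $L$, and by Deligne their eigenvalues on tensor constructions are pure Weil numbers. A continuous character to $\Q_\ell$ that is trivial on all Frobenius semisimple parts must vanish, by Chebotarev density together with the $\ell$-adic logarithm on unipotent Frobenius parts. The point to be handled carefully is that the image of each Frobenius element in $\mathbb{G}_a$ is unipotent, hence is not obviously zero; this is where the work would lie.

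\textbf{Step 3: conclusion.} With $\rR_{u,\ell}=1$, the group $\rG_\ell(X)^\circ=\MT(X_\C)_{\Q_\ell}$ is reductive. A representation of a group whose Zariski-closed image has reductive identity component is semisimple in characteristic zero, so $H^*$, and hence each $H^i$, is semisimple. Projecting the equality to each degree $i$ gives the Mumford--Tate conjecture in every degree.
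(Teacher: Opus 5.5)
Your argument is correct and is essentially the paper's proof: the decomposition $\rG_\ell(X)^\circ=\MT(X_\C)_{\Q_\ell}\times\rR_{u,\ell}$ from \cite{TZ26} makes $\rR_{u,\ell}$ a unipotent quotient, \Cref{thm:no-unipotent quotient for geometric representation} forces it to be trivial, and projecting to each degree together with reductivity gives $\textup{(MTC)}_i$ and semisimplicity. Your side sketch of how you would prove that theorem would not work as stated, because Frobenius weights and Chebotarev cannot rule out $c=\log\circ\chi_\ell$: its $V_c$ is unramified outside $\ell$ with Frobenius eigenvalues $1,1$, yet $c(\Frob_p)\neq 0$. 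The paper's proof instead rests on the Hodge--Tate property at places above $\ell$ (via Sen's theorem), combined with local and global class field theory.
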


The weight--monodromy conjecture in degree $i$ asserts that
the associated Weil--Deligne representation is pure of weight $i$ (see \Cref{Subsection:purity of WD representation}).
Using \Cref{mainthm:Mumford-Tate conjecture for HK}, we show that,
after a finite extension of $K$,
the tensor square
$H^i_\et(X_{\overline K},\Q_\ell)^{\otimes2}$
is a direct summand of a finite sum of tensor products
of $H^2_\et(X_{\overline K},\Q_\ell)$ and its dual.
The known degree-two case (see \Cref{lem:WMC in degree 2})
then implies that the Weil--Deligne representation associated
with this tensor square is pure of weight $2i$.
Since purity can be detected on tensor squares,
the conjecture follows in every degree.
\begin{mainthm}[\Cref{Theorem:purity of Weil-Deligne representations}]
\label{Corollary:introduction WMC for HK}
Let $K$ be a finite extension of $\Q_p$, and let $X$ be a hyper-K\"ahler variety over $K$.
The weight--monodromy conjecture holds for $H^i_\et(X_{\overline K},\Q_\ell)$ for every $i$ and every prime $\ell$, including $\ell=p$.
\end{mainthm}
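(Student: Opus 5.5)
We follow the strategy sketched in the introduction: reduce the weight--monodromy conjecture in arbitrary degree to the known degree-two case by tensor constructions. First, we may replace $K$ by a finite extension $K'$. The weight--monodromy conjecture is insensitive to this. Restricting to an open subgroup of the Weil group preserves the monodromy operator $N$ and the Frobenius semisimplification up to replacing Frobenius by a power, so purity of weight $i$ is unchanged. We also choose an embedding $\overline K\hookrightarrow\C$ (or pass to a finitely generated subfield $F\subset K$ over which $X$ is defined, with $K$ the completion at a place), so that \Cref{mainthm:Mumford-Tate conjecture for HK} applies to a model $X_F$. By a spreading-out argument, we arrange that $X$ descends to such an $F$ and that the local Galois group at the chosen place maps into $\rG_\ell(X_F)$.

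Second, we use \Cref{mainthm:Mumford-Tate conjecture for HK} together with the structural results of \cite{TZ26}. The group $\rG_\ell(X)^\circ=\MT(X_\C)_{\Q_\ell}$ is reductive. Moreover, the Mumford--Tate group of total cohomology acts on $H^*$ through a group closely tied to $\SO(H^2)$: via the Looijenga--Lunts--Verbitsky Lie algebra, the representation of $\MT(X_\C)$ factors through (a central extension of) $\MT_2(X_\C)$. Consequently $H^i\otimes H^i$, which kills the possible spin/central ambiguity, is a representation of $\MT_2(X_\C)_{\Q_\ell}$, possibly times a Tate twist. Every representation of the reductive group $\MT_2\subset \GL(H^2)$ is a direct summand of a finite sum of $(H^2)^{\otimes a}\otimes(H^{2\vee})^{\otimes b}$, twisted appropriately. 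Passing to a finite extension so that the Galois image lands in the identity component, we get an inclusion of $H^i_\et(X_{\overline K},\Q_\ell)^{\otimes2}$ as a Galois-stable direct summand of such a tensor construction, with weights matching ($2i=2a-2b$ after Tate twists).

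Third, we apply \Cref{lem:WMC in degree 2}: the Weil--Deligne representation of $H^2$ is pure of weight $2$. Purity is stable under tensor products, duals, Tate twists and direct summands, since the monodromy filtration is compatible with these operations. Hence $\mathrm{WD}(H^i)^{\otimes 2}$ is pure of weight $2i$. The final step is the standard lemma that $V$ is pure of weight $i$ if $V\otimes V$ is pure of weight $2i$. To prove it, decompose $V$ into $\bigoplus \mathrm{Sp}(n_j)\otimes \rho_j$ with Frobenius weights $w_j$. The summand $\mathrm{Sp}(n_j)\otimes\mathrm{Sp}(n_j)\otimes\rho_j\otimes\rho_j$ contains $\mathrm{Sp}(2n_j-1)$, and its purity forces $2w_j+(2n_j-2)=2i+(2n_j-2)$, that is, $w_j=i$ for the appropriate normalization.

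For $\ell=p$ we use the $p$-adic Weil--Deligne representation attached by Fontaine to the potentially semistable representation $H^i$. The argument is identical, provided the tensor-summand statement is transported via the $\otimes$-functor $D_{\pst}$ and the degree-two case is known for $\ell=p$, which \Cref{lem:WMC in degree 2} covers. The main obstacle is the second step. We must make precise that $H^i$ itself, rather than only $(H^i)^{\otimes 2}$, fails to be a tensor construction of $H^2$ because of the spin and central parts. We must also check that the Galois action (not just the group) respects the decomposition, including the weight and Tate-twist bookkeeping. For this we would rely on the explicit description of $\MT(X_\C)$ from \cite{TZ26}, that is, the Kuga--Satake-type isogeny between $\MT$ and $\MT_2$ with kernel in $\mu_2$.
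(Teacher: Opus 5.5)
Your proposal is correct and follows essentially the paper's route. After a finite extension the Galois action factors through $\MT(X_\C)(\Q_\ell)$, and the degree-two projection $\MT(X_\C)\to\MT_2(X_\C)$ is a central isogeny whose kernel is generated by the parity operator, acting by $(-1)^i$ on $H^i$; this is \cite[Lemma~3.2.5]{TZ26}, and it is exactly why $(H^i)^{\otimes 2}$ is a Galois-stable summand of tensor constructions of $H^2$ of weight $2i$, after which you conclude from the degree-two case and a tensor-square lemma. The only difference is cosmetic: you prove the tensor-square lemma via an $\mathrm{Sp}(n)\otimes\rho$ decomposition (after Frobenius semisimplification), whereas the paper just uses $\gr^M_jV\otimes\gr^M_jV\subset\gr^M_{2j}(V\otimes V)$. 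The Tate twists you mention are unnecessary, since the scalar subgroup of $\MT_2$ already forces $a-b=i$.
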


\subsection{Strong compatibility}\label{intro:applications}
\label{mainthm:semisimplicity and purity}
Our next aim is to compare the local Weil--Deligne representations valued in Mumford--Tate groups as $\ell$ varies.
Let $X$ be a hyper-K\"ahler variety over a number field $F$ with $b_2(X)\geq4$.
Fix an embedding $\overline F\hookrightarrow\C$, and put $\bG=\MT(X_\C)$.
After replacing $F$ by a finite extension, the Mumford--Tate conjecture implies that the Galois actions on total cohomology factor through representations
\[
\phi^{\MT}_\ell\colon G_F\longrightarrow \bG(\Q_\ell)
\]
for every $\ell$.
For a finite place $v\mid p$ of $F$, the restriction of $\phi^{\MT}_\ell$ to $G_{F_v}$ gives rise to a $\bG$-valued Weil--Deligne representation $\rho^{\MT}_{\ell,v}$ of $F_v$ over $\overline{\Q}_\ell$ for every prime $\ell$, including $\ell=p$.
We refer to \Cref{Section: G-valued Weil-Deligne representations} for details on the notion of $\bG$-valued Weil--Deligne representations.
We fix an embedding $\overline{\Q} \hookrightarrow \overline{\Q}_\ell$ for each $\ell$.
We prove the following strong compatibility result.

\begin{mainthm}[\Cref{Theorem: strong compatibility of MT representation}]
\label{Theorem:introduction rational compatibility for HK}
After replacing $F$ by a further finite extension, the family $\{\rho^{\MT}_{\ell,v}\}_\ell$ is rationally strongly compatible for every finite place $v$ of $F$.
More precisely, there exists a $\bG$-valued Weil--Deligne representation $\rho_v$ of $F_v$ over $\overline\Q$ whose $\bG(\overline\Q)$-conjugacy class is invariant under $\Gal(\overline\Q/\Q)$ and whose base change to $\overline{\Q}_\ell$ is equivalent to $\rho^{\MT}_{\ell,v}$ for every $\ell$, including $\ell=p$.
\end{mainthm}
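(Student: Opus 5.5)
The plan is to reduce the statement to a strong compatibility result for $\GSpin$- or $\SO$-valued Weil--Deligne representations attached to $H^2$. Then I would transport it to $\bG=\MT(X_\C)$ using the structure of the Mumford--Tate group from \cite{TZ26}. The key structural input is that, for $b_2\geq 4$, the Kuga--Satake construction relates $\MT(X_\C)$ to $\MT_2(X_\C)$. Concretely, after a finite extension of $F$, $\bG$ should be a quotient of, or isogenous to, a group built from $\MT_2(X_\C)\subset\SO(H^2)$ lifted to $\GSpin$. This is because the total cohomology of $X$ is, as a $\bG$-representation, cut out by tensor constructions on $H^2$ together with a spin-type representation, as in the description used above for the weight--monodromy conjecture.

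Step 1 handles degree two via the Kuga--Satake abelian variety. After a finite extension of $F$, the Kuga--Satake abelian variety $A$ is defined over $F$, and the $G_F$-action on $H^2$ lifts to $\GSpin$ via $H^1(A)$. For abelian varieties with the Mumford--Tate conjecture known (Andr\'e's degree two result gives it for $\MT_2$, hence for the relevant $A$), I would use the known strong compatibility of $\MT(A)$-valued Weil--Deligne representations. For $\ell\neq p$ this comes from Grothendieck's semistable reduction, Noot's work and Kisin--Zhou type results for Shimura varieties of Hodge type. For $\ell=p$ it comes from $p$-adic Hodge theory and $D_{\pst}$. This provides a $\GSpin$-valued Weil--Deligne representation over $\overline\Q$ whose conjugacy class is Galois-stable.

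Step 2 transfers this to $\bG$. Since the Mumford--Tate conjecture (\Cref{mainthm:Mumford-Tate conjecture for HK}) identifies $\rG_\ell(X)^\circ$ with $\bG_{\Q_\ell}$, after enlarging $F$ all $\phi_\ell^{\MT}$ factor through $\bG(\Q_\ell)$ compatibly with a central isogeny or projection $\widetilde\bG\to\bG$, where $\widetilde\bG\subset\GSpin$ is the Mumford--Tate group of $A$. I would then show two things. First, $\phi^{\MT}_\ell$ lifts to $\widetilde\bG(\Q_\ell)$ after a finite extension of $F$ that is uniform in $\ell$; the obstruction is torsion, and the kernel is a finite central group of order bounded independently of $\ell$. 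Second, the lift agrees with the Kuga--Satake representation up to a character of finite order. Pushing forward $\rho_v$ along $\widetilde\bG\to\bG$ then gives the desired $\rho_v$ over $\overline\Q$. Its conjugacy class remains $\Gal(\overline\Q/\Q)$-invariant because the map is defined over $\Q$.

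The main obstacle is Step 2: controlling the finite-order twist and the lifting uniformly in $\ell$, including $\ell=p$. Here a single finite extension of $F$ must kill the discrepancy for all $\ell$ simultaneously. I would first try to realize the discrepancy as a character $G_F\to Z(\widetilde\bG)$ that is valued in a finite group independent of $\ell$ and is defined motivically (via Andr\'e motives), so that it is the same for all $\ell$ and is trivialized by one finite extension. A secondary difficulty is that equivalence of $\bG$-valued Weil--Deligne representations is finer than equivalence of their composites with faithful representations. I would handle this by working with $\widetilde\bG$, where Kuga--Satake gives compatibility at the level of the group itself, rather than deducing it from character identities.
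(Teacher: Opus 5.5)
Your architecture matches the paper's: a Kuga--Satake abelian variety $A$ with semistable reduction at $v$, Kisin--Zhou's strong compatibility for the $\MT(A_\C)$-valued representations, the isogeny $R\colon \MT(A_\C)\to\bG$ coming from the LLV representation, and a finite-order central discrepancy. The gap is the step you yourself flag as the main obstacle. You leave it unproved, and the route you suggest for it is not available.

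Since $p_2\circ R\circ\phi^{\KS}_\ell=p_2\circ\phi^{\MT}_\ell$ and $p_2\colon\bG\to\bG_2$ is a central isogeny with kernel $\langle s\rangle$, the two $\bG(\Q_\ell)$-valued representations differ by a character $\epsilon_\ell\colon G_E\to\langle s\rangle$. Because $R(-1)=s$, twisting $\phi^{\KS}_\ell$ by the corresponding $\{\pm1\}$-valued character already gives your lift, so the lifting step is not where the difficulty lies. Everything hinges on $\epsilon_\ell$ being controlled uniformly in $\ell$. If the $\epsilon_\ell$ were unrelated quadratic characters, you could neither twist a single $\overline{\Q}$-representation by them nor kill them all by one finite extension. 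Defining $\epsilon$ ``motivically via Andr\'e motives'' would require a motivated relation between $H^{\mathrm{odd}}(X)$ and $H^1(A)$. That is essentially abelianity of the Andr\'e motive of $X$, which is not known for an arbitrary hyper-K\"ahler variety; if it were, semisimplicity would follow directly from Faltings' theorem.

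The missing input is Serre's theorem on the $\ell$-independence of the connected-component subgroup (\cite{RibetLetter}, \cite[Proposition~(6.14)]{LarsenPink92}). Apply it to the \emph{joint} system $\tau_\ell=(\phi^{\KS}_\ell,\phi^{\MT}_\ell)$, i.e.\ to $A\times X$.

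Let $J_\ell$ be the Zariski closure of $\tau_\ell(G_E)$. It lies in the fibre product of $\MT(A_\C)$ and $\bG$ over $\bG_2$. That fibre product is $\Gamma_R\cup\Gamma_R\cdot(1,s)$, where $\Gamma_R$ is the connected graph of $R$. Hence $J_\ell^\circ\subset(\Gamma_R)_{\Q_\ell}$, and the Mumford--Tate conjecture for $X$ gives equality by a dimension count. Therefore $\ker\epsilon_\ell=\tau_\ell^{-1}(J_\ell^\circ(\Q_\ell))$, which is independent of $\ell$ by Serre's theorem. Since $\epsilon_\ell$ takes values in a group of order at most two, $\epsilon_\ell=\epsilon$ for all $\ell$.

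In your ``kill it by a finite extension'' framing it is even simpler. After one finite extension making every $J_\ell$ connected, the inclusion $J_\ell\subset\Gamma_R$ forces $\epsilon_\ell=1$.

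The paper twists instead of killing, setting $\rho_v=(\epsilon_v\cdot(R\circ r^A_v),\,dR(N^A_v))$. Its $\bG(\overline{\Q})$-class is $G_\Q$-invariant because $\epsilon_v$ takes values in $Z(\bG)(\Q)$. Some such argument is unavoidable: the paper's generalized Kummer example shows $\epsilon$ can be nontrivial, even ramified at $v$, when the monodromy of $X$ alone is connected.
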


There has recently been substantial progress on the corresponding statement for abelian varieties.
Noot obtained partial results at places of semistable reduction \cite{Noot13}.
Kisin--Zhou proved that Frobenius conjugacy classes in the Mumford--Tate group are independent of $\ell$ at places of good reduction with odd residue characteristic \cite{KZa}.
They subsequently established strong compatibility for the Weil--Deligne representations valued in Mumford--Tate groups at places of semistable reduction, including $\ell=p$ \cite[Theorem~1.2]{KZb}.

Using a Kuga--Satake abelian variety and Looijenga--Lunts--Verbitsky (LLV) theory, we deduce our theorem from their result.
We refer to \Cref{Strongly compatible systems with integral coefficients} for the proof and a more precise description of the required finite extension of $F$.

Strong compatibility refines the classical independence of $\ell$ of Frobenius characteristic polynomials at places of good reduction \cite{WeilI,SerreMotivic1994}.
The theorem implies that, after replacing $F$ by a finite extension, the characteristic polynomials of Frobenius on the associated Weil--Deligne representations in every degree are independent of $\ell$ at every finite place, including places of bad reduction; see \Cref{Corollary: independence of Frobenius eigenvalues} for a purely local result.

We further prove an integral refinement away from finitely many primes $\ell$.
Keeping the notation above, we summarize it as follows; the precise formulation is given in \Cref{subsection:An integral refinement of strong compatibility}.
Let $\overline{\Z} \subset \overline{\Q}$ and $\overline{\Z}_\ell \subset \overline{\Q}_\ell$ be the integral closures of $\Z$ and $\Z_\ell$, respectively.

\begin{mainthm}[\Cref{Theorem:integral compatibility for HK}]
\label{Theorem:introduction integral compatibility for HK}
We keep the notation of \Cref{Theorem:introduction rational compatibility for HK} and fix a finite place $v\mid p$ of $F$.
There exist a positive integer $D$ divisible by $p$ and a reductive model $\cG$ of $\bG$ over $\Z[1/D]$ such that $\rho_v$ extends to a $\cG$-valued Weil--Deligne representation $\rho_v^{\integ}$ of $F_v$ over $\overline{\Z}[1/D]$ and, for every prime $\ell\nmid D$, its base change $(\rho_v^{\integ})_{\overline{\Z}_\ell}$ to $\overline{\Z}_\ell$ is equivalent to the $\cG$-valued Weil--Deligne representation $\rho^{\MT,\integ}_{\ell,v}$ of $F_v$ over $\overline{\Z}_\ell$ associated with the Galois action on $H^\bullet_\et(X_{\overline{F}_v},\Z_\ell)$.
\end{mainthm}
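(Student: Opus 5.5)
We deduce the integral statement from the rational one (\Cref{Theorem:introduction rational compatibility for HK}) together with the integral refinement of Kisin--Zhou for abelian varieties, transported through the Kuga--Satake construction and LLV theory. The plan proceeds in four steps.

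First, we spread out the group-theoretic data. Choose a $\Z$-lattice in $H^\bullet_B(X_\C,\Q)$, namely the image of integral Betti cohomology modulo torsion. Let $\cG$ be the Zariski closure of $\bG$ in $\GL$ of this lattice. Since $\bG$ is reductive, $\cG$ is a reductive model over $\Z[1/D_0]$ for suitable $D_0$. After enlarging $D_0$, we may also assume the following:
\begin{itemize}
\item the torsion in $H^\bullet(X,\Z)$ is supported at primes dividing $D_0$;
\item the étale--Betti comparison identifies $H^\bullet_\et(X_{\overline F},\Z_\ell)$ with the lattice tensored with $\Z_\ell$;
\item by the Mumford--Tate conjecture (\Cref{mainthm:Mumford-Tate conjecture for HK}) and a Larsen--Pink/Serre type argument, the image of $G_F$ lands in $\cG(\Z_\ell)$ for all $\ell\nmid D_0$.
\end{itemize}
The last point is automatic once $\phi^{\MT}_\ell$ preserves the lattice, because $\cG_{\Z_\ell}$ is the stabilizer closure. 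This defines $\rho^{\MT,\integ}_{\ell,v}$ for $\ell\nmid pD_0$.

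Second, we get the integral statement on the Kuga--Satake side. Let $A$ be the Kuga--Satake abelian variety attached to $H^2$, defined over $F$ after a finite extension. By \cite{KZb} (their integral refinement at semistable places, after further extension to reach semistable reduction), the $\MT(A)$-valued Weil--Deligne representations admit a common model over $\overline{\Z}[1/D_1]$ with values in a reductive model of $\MT(A)$. Concretely, the Frobenius conjugacy class and the monodromy operator $N$ are defined integrally and are compatible for $\ell\nmid D_1$.

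Third, we transfer this to $\cG$. LLV theory gives a homomorphism $\MT(A)\to\bG$, factoring through $\GSpin\to\SO(H^2)\times\ldots$ (up to isogeny and centre), that relates the two sets of representations. Using the Galois equivariance established in the rational proof, we extend this homomorphism to group schemes over $\Z[1/D]$ after enlarging $D$. The pushforward of the Kisin--Zhou integral Weil--Deligne representation then gives $\rho_v^{\integ}$. Its generic fibre recovers $\rho_v$ by the rational theorem, possibly up to twisting by a central character coming from the cyclotomic part, which is integral away from $p$.

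Fourth, we compare with $\rho^{\MT,\integ}_{\ell,v}$ over $\overline{\Z}_\ell$. The two representations are equivalent over $\overline{\Q}_\ell$, both are $\cG(\overline{\Z}_\ell)$-valued, and both have Frobenius and $N$ that are integral. We must upgrade $\cG(\overline{\Q}_\ell)$-conjugacy to $\cG(\overline{\Z}_\ell)$-conjugacy; this is the main obstacle. We would first try the following. Since $\ell\nmid D$ and $\ell\neq p$, inertia acts through a finite quotient of order prime to $\ell$ times a pro-$\ell$ unipotent part. Hence $\rho$ is determined by a tame datum, which is a semisimple element and a nilpotent in $\Lie\cG$. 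For such data, rational equivalence implies integral equivalence, by smoothness of centralizers over $\Z[1/D]$ once $D$ is large enough to make the finitely many relevant conjugacy-class schemes smooth (a Bruhat--Tits/Lang-type argument over the complete dvr). Alternatively, we can invoke the corresponding step in \cite{KZb} directly on the abelian side, and then transport only the conjugating element through the morphism of group schemes, which avoids redoing the conjugacy argument on $\cG$.
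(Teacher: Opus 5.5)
Your proposal has a genuine gap at exactly the point that makes the statement integral.

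\textbf{The cited input is not available.} In Step~2 you assume an integral version of Kisin--Zhou for the Kuga--Satake abelian variety. The Kisin--Zhou result used here (\Cref{Theorem:strongly compatiblity for abelian variety}) is purely rational. The integral refinement for abelian varieties with semistable reduction is proved in this paper, as a special case of \Cref{Theorem:integral compatibility for HK}, and it needs the same new input as the hyper-K\"ahler case. So Step~2, and the second alternative in Step~4, assume what is to be proved.

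\textbf{Your first alternative in Step~4 is false.} Rational equivalence plus integrally conjugate Weil parts does not give integral equivalence of the monodromy operators, however large $D$ is. The reason is that $\rho^{\MT,\integ}_{\ell,v}$ is a separate object for each $\ell$, not the specialization of one object over $\overline{\Z}[1/D]$. Smoothness of finitely many orbit or centralizer schemes therefore says nothing about where its reduction mod $\ell$ lies. Concretely, take $r$ unramified with $r(\Frob_q)=\mathrm{diag}(1,q)$ in $\GL_2$, and $N=\left(\begin{smallmatrix}0&1\\0&0\end{smallmatrix}\right)$. Then $(r,N)$ and $(r,\ell N)$ are Weil--Deligne representations over $\overline{\Z}_\ell$. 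They are conjugate over $\overline{\Q}_\ell$ by $\mathrm{diag}(\ell,1)$, which centralizes $r$. They are not conjugate over $\overline{\Z}_\ell$, since $\ell N$ reduces to $0$ and $N$ does not. In general, the $Z(r)$-orbit of $N\neq 0$ in $\mathcal V$ is open but not closed: $\lambda$ centralizes $r$ and contracts $N$ to $0$. So an integral point of $\mathcal V$ whose generic fibre lies in the orbit can reduce into the boundary. This happens geometrically, for example for $H^1$ of a Tate curve whose parameter has valuation divisible by $\ell$.

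\textbf{What is missing} is control of $N_\ell \bmod \ell$ that is uniform in $\ell$. The paper proves (\Cref{Proposition: torsion freeness of cohomological monodromy}) that, for all but finitely many $\ell$, the cokernels of all powers of $N_\ell$ on $H^i_\et(X_{\overline{F}_v},\Z_\ell)$ are torsion-free. The degree-two and abelian cases come from \cite{Ito21}. All other degrees follow because $H^i\otimes H^i$ is a direct summand of tensor constructions on $H^2$ (\Cref{lem:weighted tensor generation}), and this property is detected on tensor squares for large $\ell$. A $\bG$-equivariant splitting of $\ge\hookrightarrow\bigoplus_i\End(H^i_B(X_\C,\Q))$ over $\Z[1/D]$ then makes the cokernel of $\ad(N)$ on $\ge_{\overline{\Z}_\ell}$ torsion-free. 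This gives integral purity (\Cref{Lemma: sufficient criterion for integral purity}): the map $\Lie(Z_{\cG}(r))\to\mathcal V$, $Y\mapsto[Y,N]$, is surjective. Hence the orbit maps are smooth, the two reductions mod $\ell$ meet in the dense open orbit, and Hensel's lemma lifts a conjugating element (\Cref{Proposition:uniqueness of monodromy operator integral case}). Note that the paper never needs an integral statement on the Kuga--Satake side. It uses $A$ only to produce the rational $\rho_v$, and then compares directly inside $\cG$.

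\textbf{The Weil parts.} The paper uses that distinct Frobenius eigenvalues differ by $\ell$-adic units. This gives smooth centralizers containing maximal tori and lets one lift Weyl group elements (\Cref{Proposition: ell independence of Weil part of integral MT representation}). Your appeal to smooth conjugacy-class schemes can be made to work here, since these eigenvalues are fixed algebraic numbers. Some such separation is necessary, though: compare $\mathrm{diag}(1,1+\ell)$ with $\left(\begin{smallmatrix}1&1\\0&1+\ell\end{smallmatrix}\right)$, which are conjugate over $\Q_\ell$ but not over $\Z_\ell$.

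\textbf{Smaller points.}
\begin{enumerate}
\item $\rho^{\MT,\integ}_{\ell,v}$ is only well defined up to $\cG(\overline{\Z}_\ell)$-equivalence after you prove independence of the choices of $\Z_\ell(1)\simeq\Z_\ell$ and of the Frobenius lift. The paper does this in \Cref{Proposition: independence of integral WD construction}, using $\ell\nmid q-1$ and the eigenvalue condition.
\item The twist relating the two sides is by the character $\epsilon$ with values in $\langle s\rangle$, not by a cyclotomic character.
\end{enumerate}
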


We also prove an analogous integral refinement for abelian varieties with semistable reduction at $v$.
The key input for the integral refinement is torsion-freeness of cokernels of all powers of the monodromy operator on $H^\bullet_\et(X_{\overline{F}_v},\Z_\ell)$ for all but finitely many $\ell \neq p$.
This property was studied in \cite{Ito21} in connection with a torsion analogue of the weight--monodromy conjecture.
As in the proof of the weight--monodromy conjecture, we reduce to the degree-two case to establish this property for hyper-K\"ahler varieties in every degree (\Cref{Proposition: torsion freeness of cohomological monodromy}).

\subsection*{Outline}
\Cref{Section:semisimplicity hyperkahler} shows that the connected algebraic monodromy group of a smooth proper variety over a field finitely generated over $\Q$ has no nontrivial unipotent algebraic quotient (\Cref{thm:no-unipotent quotient for geometric representation}) and deduces the Mumford--Tate conjecture and semisimplicity (\Cref{thm: MT conjecture}) for hyper-K\"ahler varieties from this result.
In \Cref{Section: G-valued Weil-Deligne representations}, we collect what we need about $\bG$-valued Weil--Deligne representations.
We also introduce the notion of purity for $\bG$-valued Weil--Deligne representations with integral coefficients, which plays an important role in the proof of \Cref{Theorem:introduction integral compatibility for HK}.
In \Cref{Section:Mumford-Tate representation and weight-monodromy}, we prove the weight--monodromy conjecture (\Cref{Theorem:purity of Weil-Deligne representations}).
Finally, in \Cref{Strongly compatible systems with integral coefficients}, we prove rational strong compatibility (\Cref{Theorem: strong compatibility of MT representation}) and its integral refinement (\Cref{Theorem:integral compatibility for HK}).

\subsection*{Notation and conventions}
For a field $F$, we write $\overline F$ for a separable closure and $G_F=\Gal(\overline F/F)$ for the absolute Galois group.
We denote base change by a subscript.
For example, for a homomorphism of rings $R\to S$, we write $X_S\coloneqq X\times_{\Spec R}\Spec S$ for an $R$-scheme $X$ and $M_S\coloneqq M\otimes_R S$ for an $R$-module $M$.
We use the same convention for group schemes, representations, etc.

\section{Mumford--Tate conjecture for hyper-K\"ahler varieties}
\label{Section:semisimplicity hyperkahler}

\subsection{Unipotent quotients of algebraic monodromy groups}
\label{Subsection:elementary semisimplicity criterion}

Let $F$ be a number field.
For a finite place $v$ of $F$, let $F_v$ be the completion of $F$ at $v$.
For $v\mid\ell$, let $\C_\ell$ be the completion of ${\overline F}_v$.
Let $\chi_\ell\colon G_{F_v}\to\Z_\ell^\times$ be the cyclotomic character, describing the action on $\ell$-power roots of unity, and write $\C_\ell(r)$ for $\C_\ell$ with action $g\cdot z=\chi_\ell(g)^r g(z)$.
A finite-dimensional continuous $G_{F_v}$-representation $W$ over $\Q_\ell$ is \emph{Hodge--Tate} if there is a $G_{F_v}$-equivariant $\C_\ell$-linear isomorphism
\[
\C_\ell\otimes_{\Q_\ell}W
\simeq\bigoplus_{j\in\Z}\C_\ell(-j)^{\oplus m_j},
\]
with the diagonal action on the left, where the $m_j$ are nonnegative integers, almost all zero.
This is an $\ell$-adic analogue of the Hodge decomposition.
The integers $j$ with $m_j>0$, counted with multiplicity $m_j$, are the \emph{Hodge--Tate weights} of $W$.
In this convention, the cyclotomic representation $\Q_\ell(1)$ has Hodge--Tate weight $-1$.
The multiplicity of each Hodge--Tate weight is additive in short exact sequences of Hodge--Tate representations.
A finite-dimensional $G_F$-representation over $\Q_\ell$ is Hodge--Tate at $v$ if its restriction to $G_{F_v}$ has this property.

Let $c \colon G_F \to \Q_{\ell}$ be a continuous additive character of $G_F$.
Denote by $V_c$ the two-dimensional representation $ g\mapsto
\begin{pmatrix}1&c(g)\\0&1\end{pmatrix}.
$ We write $I_v$ for the inertia subgroup of $G_F$ at $v$.

\begin{lem}
\label{lem:global additive rigidity}
If $V_c$ is Hodge--Tate at every place $v\mid\ell$, then $c=0$.
\end{lem}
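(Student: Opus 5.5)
Since $c$ is a continuous homomorphism to $\Q_\ell$, it factors through $G_F^{\mathrm{ab}}$. The plan is to show that $c$ is unramified at every $v\mid\ell$ and then use class field theory. We then also need to control the places $v\nmid\ell$.

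\emph{Places $v\mid\ell$.} If $V_c$ is Hodge--Tate at $v$, it sits in an extension $0\to\Q_\ell\to V_c\to\Q_\ell\to0$, so by additivity of multiplicities its Hodge--Tate weights are $0,0$. A Hodge--Tate representation with all weights zero is $\C_\ell$-admissible. By Sen's theory its Sen operator vanishes, so the inertia $I_v$ acts through a finite quotient on $V_c$. Since the image of $I_v$ is a unipotent subgroup of $\GL_2(\Q_\ell)$, which is torsion free, $I_v$ acts trivially, i.e.\ $c|_{I_v}=0$. Concretely, one can also use Tate's vanishing $H^0(G_{F_v},\C_\ell)=\Q_\ell$ together with $H^1$ considerations: the extension class $c|_{G_{F_v}}\in H^1(G_{F_v},\Q_\ell)$ maps to zero in $H^1(G_{F_v},\C_\ell)$ exactly when $V_c$ is Hodge--Tate. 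By Tate, the kernel of $H^1(G_{F_v},\Q_\ell)\to H^1(G_{F_v},\C_\ell)$ is the unramified line. Either way, $c$ is unramified at $v\mid\ell$.

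\emph{Places $v\nmid\ell$.} Local class field theory identifies $c|_{I_v}$ with a continuous homomorphism $\cO_{F_v}^\times\to\Q_\ell$. The group $\cO_{F_v}^\times$ is finite times pro-$p$ with $p\neq\ell$, so any such map is zero. Hence $c$ is unramified everywhere except possibly at archimedean places, which are finite and killed since $\Q_\ell$ is torsion free.

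\emph{Global step.} Thus $c$ factors through the Galois group of the maximal abelian unramified extension of $F$, whose group is the class group, which is finite. Since $\Q_\ell$ is torsion free, $c=0$.

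The main obstacle is the local input at $v\mid\ell$: that Hodge--Tate with all weights $0$ forces the extension class to be unramified. I would cite Sen/Tate, namely $H^1(G_{F_v},\C_\ell)\cong \C_\ell^{G_{F_v}}$ via the cup product with $\log\chi_\ell$, and the identification of the kernel with $\Hom(G_{F_v}/I_v,\Q_\ell)$.

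Note that the theorem is stated for $F$ a number field. If it is finitely generated, I would restrict to a number field by a specialization argument; but the lemma as set up uses a number field $F$.
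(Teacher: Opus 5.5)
Your proposal is correct and follows the paper's proof essentially step for step. Sen's theorem gives finite image of inertia at $v\mid\ell$, which is then trivial in a torsion-free unipotent group; local class field theory and the pro-$p$ versus pro-$\ell$ mismatch kill inertia at $v\nmid\ell$; and finiteness of the class group finishes, where your use of torsion-freeness of $\Q_\ell$ at the archimedean places is an equally valid substitute for the paper's appeal to the narrow class group.
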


\begin{proof}
Let $v\mid\ell$.
The representation $V_c|_{G_{F_v}}$ is an extension of the trivial representation $\Q_\ell$ by itself.
Since it is Hodge--Tate by hypothesis, additivity of the multiplicities of Hodge--Tate weights in short exact sequences shows that all its weights are zero.
Applying Sen's theorem \cite[Corollary~1]{Sen1973a} over the completion of the maximal unramified extension of $F_v$ shows that $I_v$ has finite image on $V_c$.
Thus $c(I_v)$ is finite and hence zero, since the additive group of $\Q_{\ell}$ is torsion-free.

Now let $v\nmid\ell$, and let $p$ be the residue characteristic of $F_v$.
Since $c$ is additive, its restriction to $G_{F_v}$ factors through the abelianization.
Local class field theory therefore identifies $c(I_v)$ with the image of a continuous homomorphism $\cO_{F_v}^{\times}\to \Q_{\ell}$.
The group $\cO_{F_v}^{\times}$ has an open pro-$p$ subgroup, whereas every compact subgroup of $\Q_{\ell}$ is pro-$\ell$.
Since $p\ne\ell$, this open subgroup has trivial image, so $c(I_v)$ is again finite and hence zero.

We have shown that $c$ is unramified at every finite place of $F$.
Since the target of $c$ is abelian, global class field theory makes $c$ factor through the narrow class group of $F$, which is finite.
As $\Q_{\ell}$ has no nonzero finite subgroup, we conclude that $c=0$.
\end{proof}

\begin{prop}
\label{prop:no-unipotent quotient for global representation}
Let $ \rho \colon G_F\to \GL(V) $ be a continuous finite-dimensional representation over $\Q_\ell$.
Suppose that $V$ is Hodge--Tate at every place $v \mid \ell$.
If $G$ is the algebraic monodromy group of $V$, then $G^\circ$ has no nontrivial unipotent algebraic quotient.
\end{prop}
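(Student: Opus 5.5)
Suppose $G^\circ$ has a nontrivial unipotent algebraic quotient $G^\circ\twoheadrightarrow U$. The plan is to shrink $U$ to $\GG_a$ and then produce from $\rho$ an additive character $c$ of an open subgroup of $G_F$ to which \Cref{lem:global additive rigidity} applies.

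First, reduce to the connected case. Let $F'/F$ be the finite extension cut out by $\rho^{-1}(G^\circ)$. Then $\rho|_{G_{F'}}$ has monodromy group $G^\circ$. It is still Hodge--Tate at all places above $\ell$, since the Hodge--Tate property is preserved under restriction to open subgroups of $G_{F_v}$. As $F'$ is again a number field, we may replace $F$ by $F'$ and assume $G=G^\circ$ is connected.

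Second, reduce to $U=\GG_a$. A nontrivial unipotent group in characteristic zero has a surjection onto $\GG_a$: its abelianization $U/[U,U]$ is a nontrivial vector group, and any linear functional on it gives such a map. Composing, we obtain a surjective homomorphism $\pi\colon G\to\GG_a$ over $\Q_\ell$. Set $c=\pi\circ\rho\colon G_F\to\GG_a(\Q_\ell)=\Q_\ell$. This is a continuous additive character. It is nonzero, indeed has Zariski dense image, because $\rho(G_F)$ is Zariski dense in $G$ and $\pi$ is surjective.

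Third, check that $V_c$ is Hodge--Tate at each $v\mid\ell$. The composite $G\to\GG_a\to\GL_2$, $t\mapsto\begin{pmatrix}1&t\\0&1\end{pmatrix}$, is an algebraic representation $W$ of $G$, and $V_c=W\circ\rho$. By Tannakian generation, $W$ is a subquotient of a direct sum of representations $V^{\otimes a}\otimes (V^\vee)^{\otimes b}$, since every algebraic representation of the monodromy group $G\subset\GL(V)$ arises this way. Hodge--Tate representations are stable under tensor products, duals, direct sums and subquotients: for subquotients, use that $\C_\ell$-semilinear representations of Hodge--Tate type form a category closed under subobjects and quotients, e.g.\ via the Sen operator being semisimple with integer eigenvalues. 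Hence $V_c$ is Hodge--Tate at every $v\mid\ell$, and \Cref{lem:global additive rigidity} gives $c=0$. This contradicts the Zariski density of the image of $c$ in $\GG_a$, so no nontrivial unipotent quotient exists.

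The main technical point is the closure of the Hodge--Tate property under subquotients in the third step. I would handle it with Sen theory: a $\C_\ell$-representation is Hodge--Tate iff its Sen operator is semisimple with integer eigenvalues, and both properties pass to invariant subspaces and quotients of the Sen module.
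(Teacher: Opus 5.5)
Your proof is correct and follows the paper's argument essentially step for step. You pass to a finite extension so that $G$ is connected, realize $V_c$ as a subquotient of tensor constructions on $V$ and $V^\vee$ (so it is Hodge--Tate), apply \Cref{lem:global additive rigidity}, and conclude because any nontrivial unipotent quotient surjects onto $\GG_a$. Your added justifications, namely that the abelianization of $U$ is a nontrivial vector group and that the Hodge--Tate property passes to subquotients via the Sen operator, fill in points the paper handles by citing Milne or leaves implicit.
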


\begin{proof}
By replacing $F$ by a finite extension, we may assume that $G=G^\circ$.
Let $c\colon G\to\GG_a$ be an algebraic homomorphism and consider the two-dimensional representation $V_c$ of $G$ as above.
By \cite[Theorem~4.14]{Milne17}, $V_c$ is a subquotient of a finite direct sum of tensor products of $V$ and its dual $V^\vee$, and hence $V_c$ is Hodge--Tate at every place $v\mid\ell$.
It then follows from \Cref{lem:global additive rigidity} that $c$ is trivial.
A nontrivial connected unipotent quotient of $G$ would admit $\GG_a$ as a quotient \cite[Propositions~14.21 and~14.22]{Milne17}, contradicting what we have just proved.
\end{proof}

\begin{thm}
\label{thm:no-unipotent quotient for geometric representation}
Let $F$ be a finitely generated field over $\Q$ and let $X$ be a smooth proper variety over $F$.
Let $\rG_\ell(X)$ be the algebraic monodromy group of the representation
\[
 \rho_\ell \colon G_F\longrightarrow \GL(H^\bullet_{\et}(X_{\overline F},\Q_\ell)).
\]
Then $\rG_\ell(X)^\circ$ has no nontrivial unipotent algebraic quotient.
\end{thm}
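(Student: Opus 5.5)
The plan is to reduce to the case of a number field and then apply \Cref{prop:no-unipotent quotient for global representation}, using that étale cohomology of smooth proper varieties is Hodge--Tate at places above $\ell$ (Faltings' Hodge--Tate decomposition, or $p$-adic Hodge theory in general).

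\emph{Step 1: spread out and specialize.} Write $F$ as the function field of a smooth geometrically irreducible variety $S$ over a number field $k$ (enlarging $k$ if needed), and spread $X$ out to a smooth proper morphism $\mathcal X\to S$ after shrinking $S$. The sheaves $R^i f_*\Q_\ell$ are lisse, so $\rho_\ell$ factors through $\pi_1^{\et}(S,\bar\eta)$. For a closed point $s\in S$ with residue field $k(s)$ a number field, the decomposition group gives $G_{k(s)}\to\pi_1^{\et}(S)$, and the restriction of the lisse sheaf is $H^\bullet_\et(\mathcal X_{\bar s},\Q_\ell)$ with its Galois action. By a Hilbert irreducibility type result for $\ell$-adic representations (Serre's theorem, or Terasoma's/Cadoret--Tamagawa's refinement for images of arithmetic fundamental groups), there is a closed point $s$ such that the image of $G_{k(s)}$ is open in the image of $\pi_1^{\et}(S)$, which equals the image of $G_F$ since $\pi_1(\eta)\to\pi_1(S)$ is surjective. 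Then the Zariski closures of the two images have the same identity component, namely $\rG_\ell(X)^\circ=\rG_\ell(\mathcal X_s)^\circ$.

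\emph{Step 2: apply the proposition.} The representation $H^\bullet_\et(\mathcal X_{\bar s},\Q_\ell)$ of $G_{k(s)}$ comes from a smooth proper variety over a number field. By Faltings, its restriction to $G_{k(s)_v}$ is Hodge--Tate for every $v\mid\ell$. Applying \Cref{prop:no-unipotent quotient for global representation} to it, we find that $\rG_\ell(\mathcal X_s)^\circ$ has no nontrivial unipotent quotient, and hence neither does $\rG_\ell(X)^\circ$. If $F$ is itself a number field, Step 1 is unnecessary.

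\emph{Main obstacle.} The difficult step is Step 1, namely producing a specialization whose Galois image is open in the generic image. A cleaner alternative avoids open-image results. It suffices to show that every algebraic homomorphism $c\colon\rG_\ell(X)^\circ\to\GG_a$ vanishes. As in the proof of the proposition, $V_c$ is a subquotient of tensor constructions on $H^\bullet$, so it extends to a lisse sheaf on $S$ after passing to a finite étale cover of $S$. The argument then has two parts. First, the restriction of $c$ to the geometric fundamental group $\pi_1(S_{\bar k})$ is trivial, because $H^\bullet$ restricted to $\pi_1(S_{\bar k})$ is semisimple by Deligne (Weil II) and unipotent quotients of its monodromy group vanish. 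Second, $c$ then factors through $G_{k'}$ for a finite extension $k'$ of $k$. Via any rational point section $G_{k'}\to\pi_1(S)$ (which exists after a finite extension, using a closed point), it is a character of the cohomology of a specialization and is Hodge--Tate, so \Cref{lem:global additive rigidity} kills it. I would try this second route first, since it needs only Deligne's semisimplicity and the lemma.
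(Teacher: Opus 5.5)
Your Steps 1--2 are exactly the paper's proof. The alternative you say you would try first is also correct, and it is a genuinely different route.

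Instead of choosing a specialization whose Galois image is open in the generic image, you first kill $c$ on the geometric part. The Zariski closure of $\rho_\ell(\pi_1(S'_{\bar k}))$ acts faithfully and semisimply on $H^\bullet$, so its identity component is reductive. Hence $c$ has finite, and therefore trivial, image on it, and $c\circ\rho_\ell$ descends along $\pi_1(S')\to G_{k'}$. After that, any closed point $s$ will do. By Faltings, $V_c$ restricted to $G_{k(s)}$ is Hodge--Tate above $\ell$, so \Cref{lem:global additive rigidity} kills $c\circ\rho_\ell$ on $G_{k(s)}$. Since $G_{k(s)}$ has finite index in $G_{k'}$ and $\Q_\ell$ is torsion-free, the descended character then vanishes on all of $G_{k'}$. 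You should state this last step explicitly, or base change $S'$ to $k(s)$ so that $s$ becomes a rational point.

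The trade-off is as follows.
\begin{itemize}
\item Your route needs no control of the image at the specialized point. It therefore avoids thin sets and open-image statements entirely, including the technical issue of arranging that the relevant rational points are not thin.
\item The price is a much deeper input: the characteristic-zero geometric semisimplicity of $R^if_*\Q_\ell$ for a smooth proper family. Cite it in that form, e.g.\ Deligne's Hodge~II for projective families, or BBD~\S6.2 in general, rather than the finite-field statement of Weil~II.
\item Your route still needs the reduction to $\GG_a$ and the Tannakian subquotient fact, so it effectively reruns the proof of \Cref{prop:no-unipotent quotient for global representation} in the relative setting.
\item The paper's route uses only Serre's comparatively elementary Hilbert irreducibility for $\ell$-adic images and then applies \Cref{prop:no-unipotent quotient for global representation} verbatim.
\end{itemize}
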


\begin{proof}
By the standard specialization argument using Hilbert irreducibility \cite[Section~10.6]{SerreMW1997} (see also \cite[Section~5]{CadoretMotivated2013}), it suffices to treat the case where $F$ is a number field.
In this case, $H^\bullet_{\et}(X_{\overline F},\Q_\ell)$ is Hodge--Tate at every place above $\ell$ by \cite{Faltings88,Tsuji}.
The assertion therefore follows from \Cref{prop:no-unipotent quotient for global representation}.
\end{proof}

\subsection{The Mumford--Tate conjecture and semisimplicity}
\label{Semisimple MT conjecture and its consequences}

Let $X$ be a hyper-K\"ahler variety over a field $F$ finitely generated over $\Q$, and fix an embedding $\overline F\hookrightarrow\C$.

Write $\MT(X_\C)$ for the Mumford--Tate group of total Betti cohomology and $\MT_i(X_\C)$ for that of $H^i_B(X_\C,\Q)$.
For any algebraic group acting on total Betti cohomology and preserving the grading, write $p_i$ for its degree-$i$ projection; we use the same notation on \'etale cohomology via the \'etale--Betti comparison.
In particular, $p_i(\MT(X_\C))=\MT_i(X_\C)$.

For each prime $\ell$, write $\rG_\ell(X)$ for the algebraic monodromy group of $H^\bullet_{\et}(X_{\overline F},\Q_\ell)$ and $\rR_{u,\ell}$ for the unipotent radical of $\rG_\ell(X)^\circ$.
Let $\rG_{\ell,i}(X)=p_i(\rG_\ell(X))$ be the algebraic monodromy group in degree $i$.
The Mumford--Tate conjecture $\textup{(MTC)}_i$ asserts that
\[
\rG_{\ell,i}(X)^\circ=\MT_i(X_\C)_{\Q_\ell}
\]
for every prime $\ell$.
We recall the main result of \cite{TZ26} in the following form.

\begin{thm}[\cite{TZ26}]
\label{thm: semisimple MT conjecture}
Under the comparison isomorphism
\[
H^\bullet_{\et}(X_{\overline F},\Q_\ell)
\simeq H^\bullet_B(X_\C,\Q)\otimes_\Q\Q_\ell,
\]
we have $\MT(X_\C)_{\Q_\ell}\subset\rG_\ell(X)^\circ$.
Moreover, $\rR_{u,\ell}$ commutes with $\MT(X_\C)_{\Q_\ell}$, and multiplication induces a canonical decomposition
\begin{equation}\label{eq:semisimple MTC}
\rG_\ell(X)^\circ
=
\MT(X_\C)_{\Q_\ell}\times\rR_{u,\ell}.
\end{equation}
\end{thm}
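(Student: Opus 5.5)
The statement is the main result of \cite{TZ26}, recalled here in a form suited to our purposes. The plan is therefore not to reprove it from scratch. Instead, I would extract the three assertions from the semisimplified Mumford--Tate conjecture proved there, together with the structure of the LLV decomposition. Throughout, write $\mathfrak g_{\mathrm{tot}}$ for the LLV Lie algebra of $X_\C$. Then $\MT(X_\C)$ is determined by $\MT_2(X_\C)$: its derived group is the image of $\Spin(H^2_B)$-type factors commuting with the LLV action, and its centre comes from the grading and weight cocharacter. The key points are, first, that every tensor (Hodge class) fixed by $\MT(X_\C)$ is fixed by $\rG_\ell(X)^\circ$ up to semisimplification. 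Second, the Levi quotient $\rG_\ell(X)^\circ/\rR_{u,\ell}$ is identified with $\MT(X_\C)_{\Q_\ell}$ by \cite{TZ26}.

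\textbf{Step 1: inclusion $\MT(X_\C)_{\Q_\ell}\subset\rG_\ell(X)^\circ$.} I would use André's theorem in degree two (for $b_2\ge 4$, and Tankeev-type arguments or deformation to $b_2\ge4$ otherwise) to get $\MT_2(X_\C)_{\Q_\ell}\subset \rG_{\ell,2}(X)^\circ$. Next, I would lift this to total cohomology. Motivic LLV theory (Kuga--Satake and motivated cycles, as in \cite{TZ26}) shows that $\rG_\ell(X)^\circ$ contains the image of the degree-two group acting through the LLV-integrated $\Spin$ action on $H^\bullet$. The image of this action is precisely $\MT(X_\C)_{\Q_\ell}$, up to the central torus coming from the weight grading. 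That torus lies in $\rG_\ell(X)^\circ$ by Bogomolov's theorem on homotheties applied to the cyclotomic part.

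\textbf{Step 2: commutation and the product decomposition.} The unipotent radical $\rR_{u,\ell}$ is normalized by $\MT(X_\C)_{\Q_\ell}$, so $\MT(X_\C)_{\Q_\ell}$ acts on $\Lie\rR_{u,\ell}$. This Lie algebra is a subquotient of $\End(H^\bullet)$ that kills the graded pieces of the socle filtration. Every such piece is a sum of $\MT$-representations realized inside the $\mathfrak g_{\mathrm{tot}}$-isotypic structure. Here I would argue through weights of the central cocharacter: $\rR_{u,\ell}$ preserves cohomological degree and Galois weight. Together with the LLV semisimplicity, this forces $\Lie\rR_{u,\ell}$ to lie in the $\MT$-invariant part of $\End(H^\bullet)$. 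Equivalently, I would show that $\MT$ acts trivially on $\Lie\rR_{u,\ell}$ because nontrivial isotypic components would produce Hodge classes contradicting the semisimplified conjecture. Commutation then gives $\MT\cap\rR_{u,\ell}=1$, since $\MT$ is reductive. Finally, the semisimplified Mumford--Tate conjecture $\rG_\ell(X)^\circ/\rR_{u,\ell}\simeq\MT(X_\C)_{\Q_\ell}$ yields that multiplication is an isomorphism.

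\textbf{Main obstacle.} I expect the hardest step to be the commutation claim. Ruling out nontrivial $\MT$-isotypic components in $\Lie\rR_{u,\ell}$ is exactly where the specific hyper-Kähler input (LLV theory and its compatibility with Galois actions) from \cite{TZ26} is indispensable. There I would simply invoke their argument rather than reconstruct it.
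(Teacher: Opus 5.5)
Deferring to \cite{TZ26} is exactly what the paper does: its proof of this statement is nothing more than the citation \cite[Lemma~3.3.2 and Proposition~5.2.2]{TZ26}. The trouble is the mechanism you sketch around the citation, above all for the commutation, which you rightly single out as the crux.

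\textbf{Step 2 does not work as sketched.} Weights of the central cocharacter give nothing. Since $\rR_{u,\ell}$ preserves degrees, the weight torus acts trivially on $\Lie\rR_{u,\ell}$ automatically, and this says nothing about the rest of $\MT(X_\C)$. Nor can the semisimplified statement $\rG_\ell(X)^\circ/\rR_{u,\ell}\simeq\MT(X_\C)_{\Q_\ell}$ force commutation, because a Levi subgroup need not centralize the unipotent radical. An example is the algebraic monodromy group $\GG_m\ltimes\GG_a$ of a nonsplit Kummer extension of $\Q_\ell$ by $\Q_\ell(1)$. That group also has no nontrivial unipotent quotient, so without commutation \Cref{thm:no-unipotent quotient for geometric representation} would be useless. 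Vectors in a nontrivial $\MT$-isotypic part of $\Lie\rR_{u,\ell}$ are by definition not $\MT$-invariant, so they yield no Hodge classes to contradict anything. Your description also has the roles reversed: $\MT(X_\C)$ lies \emph{inside} the LLV group $R(\GSpin)$ (Verbitsky's theorem, recalled before \Cref{Lemma: LLV and Kuga-Satake hodge structure}), and it is $\rR_{u,\ell}$ that lies in the centralizer of the LLV algebra.

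\textbf{A commutation argument that works.} The image $p_2(\rR_{u,\ell})$ is a connected unipotent normal subgroup of $\rG_{\ell,2}(X)^\circ$. That group is reductive: by Andr\'e for $b_2\geq4$, and for $b_2=3$ because it lies in the torus generated by homotheties and $\SO$ of the rank-two complement $c_1(\scrL)^\perp$. Hence $p_2(\rR_{u,\ell})=1$. Galois-equivariance of cup product gives $gL_\eta g^{-1}=L_{p_2(g)\eta}$ for $g\in\rG_\ell(X)$. So $\rR_{u,\ell}$ commutes with every $L_\eta$ and with the grading operator, and therefore with the dual operators $\Lambda_\eta$ by uniqueness of $\fsl_2$-triples. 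It follows that $\rR_{u,\ell}$ commutes with the whole LLV algebra and with the connected group $R(\GSpin)\supset\MT(X_\C)_{\Q_\ell}$ that it integrates.

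\textbf{Corrections to Step~1.}
First, $b_2$ is a deformation invariant, so ``deformation to $b_2\geq4$'' is impossible; the torus remark above handles $b_2=3$.
Second, Bogomolov's theorem concerns homotheties on $H^1$ of abelian varieties. The torus acting by $z^i$ on $H^i$ lies in $\rG_\ell(X)^\circ$ for a different reason, namely a Frobenius-torus argument using Deligne's purity.
Third, and most important, the same LLV reasoning only places $\rG_\ell(X)^\circ$ inside $\MT(X_\C)_{\Q_\ell}\cdot C$, surjecting onto $\MT_2(X_\C)_{\Q_\ell}$ in degree two, where $C$ is the centralizer of the LLV algebra. Excluding a graph-like subgroup there is the genuine content that has to come from \cite{TZ26}. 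Concretely, this means proving $\MT(X_\C)_{\Q_\ell}\subset\rG_\ell(X)^\circ$ and identifying the Levi quotient with $\MT(X_\C)_{\Q_\ell}$. Your sketch does not supply this.
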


\begin{proof}
See \cite[Lemma~3.3.2 and Proposition~5.2.2]{TZ26}.
\end{proof}

The direct-product decomposition makes $\rR_{u,\ell}$ a quotient of the connected monodromy group.
The preceding subsection therefore gives the following theorem.

\begin{thm}
\label{thm: MT conjecture}
For every prime $\ell$, we have
\begin{equation}\label{eq: MT conjecture}
\rG_\ell(X)^\circ=\MT(X_\C)_{\Q_\ell}
\end{equation}
under the \'etale--Betti comparison.
Moreover, for each $0\leq i\leq 2\dim X$, $H^i_{\et}(X_{\overline F},\Q_\ell)$ is a semisimple $G_F$-representation and the Mumford--Tate conjecture $\textup{(MTC)}_i$ holds for $X$.
\end{thm}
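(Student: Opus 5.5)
The plan is to combine the direct-product decomposition of \Cref{thm: semisimple MT conjecture} with \Cref{thm:no-unipotent quotient for geometric representation}. By \eqref{eq:semisimple MTC}, the second projection
\[
\rG_\ell(X)^\circ=\MT(X_\C)_{\Q_\ell}\times\rR_{u,\ell}\longrightarrow\rR_{u,\ell}
\]
is a surjective homomorphism of algebraic groups onto a unipotent group. \Cref{thm:no-unipotent quotient for geometric representation} says $\rG_\ell(X)^\circ$ has no nontrivial unipotent quotient, so $\rR_{u,\ell}=1$, and \eqref{eq: MT conjecture} follows at once.

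For semisimplicity I would argue as follows. Since $\rR_{u,\ell}$ is trivial, $\rG_\ell(X)^\circ=\MT(X_\C)_{\Q_\ell}$ is reductive; it is a Mumford--Tate group of a polarizable Hodge structure, so this also holds without the previous step. We are in characteristic zero, so the faithful representation $H^\bullet_\et(X_{\overline F},\Q_\ell)$ of $\rG_\ell(X)^\circ$ is semisimple, and hence so is each graded piece $H^i_\et(X_{\overline F},\Q_\ell)$. A representation of $\rG_\ell(X)$ whose restriction to the finite-index normal subgroup $\rG_\ell(X)^\circ$ is semisimple is itself semisimple: average a $\rG_\ell(X)^\circ$-equivariant projection over the finite group $\rG_\ell(X)/\rG_\ell(X)^\circ$, which is possible in characteristic zero. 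Finally, $G_F$ has Zariski-dense image in $\rG_\ell(X)$, so $G_F$-stable subspaces are exactly the $\rG_\ell(X)$-stable ones. Therefore $H^i_\et(X_{\overline F},\Q_\ell)$ is semisimple as a $G_F$-representation.

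For $\textup{(MTC)}_i$ I would apply $p_i$ to \eqref{eq: MT conjecture}. The image of a connected group is connected, and $p_i(\rG_\ell(X))$ contains $p_i(\rG_\ell(X)^\circ)$ with finite index. Hence $\rG_{\ell,i}(X)^\circ=p_i(\rG_\ell(X)^\circ)=p_i(\MT(X_\C))_{\Q_\ell}=\MT_i(X_\C)_{\Q_\ell}$.

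I do not expect a genuine obstacle, since all the depth lies in the cited results. The one point that needs care is that $\rR_{u,\ell}$ is genuinely an algebraic quotient, which requires the product in \eqref{eq:semisimple MTC} to be a direct product of algebraic groups, and that is exactly what \Cref{thm: semisimple MT conjecture} provides.
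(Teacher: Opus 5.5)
Your proposal is correct and follows the paper's proof: the direct-product decomposition makes $\rR_{u,\ell}$ a unipotent algebraic quotient of $\rG_\ell(X)^\circ$, which must therefore be trivial, and applying $p_i$ (using $p_i(\rG_\ell(X)^\circ)=\rG_{\ell,i}(X)^\circ$) gives $\textup{(MTC)}_i$. The only difference is that you prove the passage from reductivity to semisimplicity by hand, averaging over the component group and using Zariski density, whereas the paper cites \cite[Corollary~22.43]{Milne17}.
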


\begin{proof}
Projection onto the second factor in \eqref{eq:semisimple MTC} makes $\rR_{u,\ell}$ a unipotent algebraic quotient of $\rG_\ell(X)^\circ$.
Thus $\rR_{u,\ell}=1$ by \Cref{thm:no-unipotent quotient for geometric representation}, which proves \eqref{eq: MT conjecture}.
For every $i$, the degree-$i$ projection satisfies $p_i(\rG_\ell(X)^\circ)=\rG_{\ell,i}(X)^\circ$.
Applying $p_i$ to \eqref{eq: MT conjecture} therefore gives
\[
\rG_{\ell,i}(X)^\circ
=p_i(\MT(X_\C)_{\Q_\ell})
=\MT_i(X_\C)_{\Q_\ell},
\]
which proves $\textup{(MTC)}_i$ in every degree.
Reductivity of Mumford--Tate groups then implies semisimplicity by \cite[Corollary~22.43]{Milne17}.
\end{proof}

\section{\texorpdfstring{$\bG$}{G}-valued Weil--Deligne representations}\label{Section: G-valued Weil-Deligne representations}

\subsection{Review of G-valued Weil--Deligne representations}\label{Subsection:G-WD representation}
Let $K$ be a complete discretely valued field of characteristic zero with residue field $\F_q$, where $q$ is a power of a prime number $p$.
Let $I_K$ be the inertia subgroup of $G_K$.
Let $W_K$ be the Weil group of $K$.
By definition, we have a short exact sequence
\[
1 \longrightarrow I_K \longrightarrow W_K \xlongrightarrow{\alpha} \Z \longrightarrow 1,
\]
where $\alpha$ sends a lift $\Frob_q$ of the geometric Frobenius element to $-1$.

Let $R$ be a $\Z[1/q]$-algebra and let $\bG$ be a reductive group scheme over $\Spec R$.
Let $\Lie(\bG)$ be the Lie algebra over $R$ of $\bG$.
Let $\Omega$ be an $R$-algebra.
In this paper, by a \emph{$\bG$-valued Weil--Deligne representation} (of $K$) over $\Omega$, we mean a pair $\rho=(r,N)$ consisting of a continuous homomorphism $ r \colon W_K \to \bG(\Omega) $ (where $\bG(\Omega)$ is endowed with the discrete topology) and an element $ N \in \Lie(\bG)_\Omega, $ which we call the monodromy operator, such that $ \Ad(r(w))(N)=q^{\alpha(w)}N $ for every $w\in W_K$.
If $\rho$ takes values in $\bG=\GL_{n}$ for some nonnegative integer $n$, we simply call $\rho$ a \emph{Weil--Deligne representation}.
Two $\bG$-valued Weil--Deligne representations $\rho_1=(r_1,N_1)$ and $\rho_2=(r_2,N_2)$ over $\Omega$ are said to be equivalent if they are conjugate by an element of $\bG(\Omega)$, that is, if there exists $g\in \bG(\Omega)$ such that $r_2(w)=\Ad(g)(r_1(w))$ for every $w\in W_K$ and $N_2=\Ad(g)(N_1)$.
In this case, we write $\rho_1 \sim_{\bG(\Omega)} \rho_2$.

For a homomorphism $\eta \colon \bG \to \bG'$ of reductive group schemes over $R$, we denote by $\eta \circ \rho$ the induced $\bG'$-valued Weil--Deligne representation $(\eta \circ r, d\eta(N))$, where $d\eta \colon \Lie(\bG) \to \Lie(\bG')$ is the homomorphism induced by $\eta$ on Lie algebras.
For a homomorphism $\iota \colon \Omega \to \Omega'$ of $R$-algebras, we denote by $\rho \otimes_\Omega \Omega'$ or $\rho_{\Omega'}$ the base change of $\rho$ along $\iota$.

If $\Omega$ is a field of characteristic zero, we say that $\rho$ is \textit{Frobenius semisimple} if $r(\Frob_q) \in \bG(\Omega)$ is semisimple for some (and hence any) geometric Frobenius lift $\Frob_q$.
For a finite extension $L/K$, the restriction \( \rho|_L \coloneqq (r|_{W_L},N) \) is Frobenius semisimple if and only if $\rho$ is Frobenius semisimple.

\subsection{G-valued Weil--Deligne representations associated with Galois representations}\label{Subsection:G-WD representation associated with Galois representation}
Let $\ell$ be a prime number and let $\bG$ be a reductive group over $\Q_\ell$.
A $\bG$-valued Weil--Deligne representation can be attached to a $\bG$-valued representation of $G_K$ (which is potentially semistable when $\ell=p$).
Here we recall the construction.

\begin{ex}\label{Example: WD rep ell adic case}
Assume that $\ell \neq p$.
Let $ \phi \colon G_K \to \bG(\Q_\ell) $ be a continuous homomorphism.
We fix an isomorphism $\Z_\ell(1) \simeq \Z_\ell$ (as $\Z_\ell$-modules).
Let $t_\ell\colon I_K \to \Z_\ell(1) \simeq \Z_\ell$ be the usual maximal pro-$\ell$ quotient.
By Grothendieck's monodromy theorem, there exists a unique nilpotent element $N \in \Lie(\bG)$ such that $\phi(\sigma)=\exp(t_\ell(\sigma)N)$ for all elements $\sigma$ of some open subgroup of $I_K$.
We fix a geometric Frobenius lift $\Frob_q \in W_K$.
We define $r \colon W_K \to \bG(\Q_\ell)$ by $\Frob^n_q \sigma \mapsto \phi(\Frob^n_q \sigma)\exp(-t_\ell(\sigma)N)$.
After passing to $\overline{\Q}_\ell$, we obtain the $\bG$-valued Weil--Deligne representation $\rho(\phi) \coloneqq (r,N)$ of $K$ over $\overline{\Q}_\ell$, whose equivalence class is independent of the choices of $\Z_\ell(1) \simeq \Z_\ell$ and $\Frob_q$ (see \cite[Lemma~8.4.3 and Variant~8.11]{Deligne73}).
\end{ex}

\begin{ex}\label{Example: WD rep p adic case}
Assume that $\ell =p$.
Let $ \phi \colon G_K \to \bG(\Q_\ell) $ be a continuous homomorphism.
We assume that $\phi$ is potentially semistable, that is, for every finite-dimensional representation $V \in \Rep_{\Q_p}(\bG)$, the induced $G_K$-action on $V$ is potentially semistable\footnote{It suffices to check this condition for one faithful representation; see \cite[Lemma 2.8]{IM20} for example.}
Let $K_0$ be the maximal unramified subfield of $K$ and $K^{\ur}_0 \subset \overline{K}$ the maximal unramified extension of $K_0$.
Every $V \in \Rep_{\Q_p}(\bG)$ gives rise to the $(\varphi,N)$-module
\[
D_{\pst}(V) \coloneqq \varinjlim_{L}(V\otimes_{\Q_p} B_{\st})^{G_L}
\]
over $K^{\ur}_0$, where $L \subset \overline{K}$ runs over all finite Galois extensions of $K$.
We choose an embedding $\tau \colon K^{\ur}_0 \hookrightarrow \overline{\Q}_p$ (over $\Q_p$).
The construction $ V \mapsto D_{\pst}(V) \otimes_{K^{\ur}_0} \overline{\Q}_p $ induces an exact tensor functor $\omega' \colon \Rep_{\Q_p}(\bG) \to \Vect_{\overline{\Q}_p}$, where $\Vect_{\overline{\Q}_p}$ is the category of finite-dimensional $\overline{\Q}_p$-vector spaces.
Let $\omega \colon \Rep_{\Q_p}(\bG) \to \Vect_{\overline{\Q}_p}$ be the canonical fiber functor $V \mapsto V_{\overline{\Q}_p}$.
By Tannakian formalism, there exists an isomorphism $\omega \simeq \omega'$ of tensor functors, and this induces an isomorphism $\bG_{\overline{\Q}_p} \simeq \underline{\Aut}^{\otimes}(\omega')$.
For each $V \in \Rep_{\Q_p}(\bG)$, the Weil group $W_K$ acts on $D_{\pst}(V)$ by
\[
  \sigma \otimes \sigma \varphi^{-\alpha(\sigma)[\F_q:\F_p]},\quad \forall\sigma \in W_K\,.
\]
This action and the monodromy operators on the $(\varphi,N)$-modules are compatible with tensor operations in $V$.
Hence, via the chosen tensor isomorphism $\omega \simeq \omega'$, they define a continuous homomorphism $ r\colon W_K\to \bG(\overline{\Q}_p) $ together with an element $ N\in \Lie(\bG)_{\overline{\Q}_p}$ such that $\rho(\phi) \coloneqq (r,N)$ is a $\bG$-valued Weil--Deligne representation of $K$ over $\overline{\Q}_p$.
The equivalence class of $\rho(\phi)$ is independent of the choices of $\tau$ and $\omega \simeq \omega'$.
\end{ex}

\begin{rem}\label{rem:functorial in groups}
   The constructions of $\bG$-valued Weil--Deligne representations $\phi \mapsto \rho(\phi)$ in 
   \Cref{Example: WD rep ell adic case,Example: WD rep p adic case} are both functorial in $\bG$, up to equivalence.
\end{rem}

For use in \Cref{subsection:An integral refinement of strong compatibility}, we record an integral version of \Cref{Example: WD rep ell adic case}.

\begin{ex}\label{Example: integral Weil-Deligne construction}
Assume that $\ell\neq p$.
Let
\[
\cG\hookrightarrow\prod_i\GL(V_i)
\]
be a reductive closed subgroup scheme over $\Z_\ell$, where $\prod_i\GL(V_i)$ is a finite product and each $V_i$ is a finite free $\Z_\ell$-module of rank $n_i$.
Assume that $\ell>\max_i n_i$.
Let $ \phi\colon G_K\to\mathcal G(\Z_\ell) $ be a continuous homomorphism, and let $\overline{\Z}_\ell$ denote the integral closure of $\Z_\ell$ in $\overline{\Q}_\ell$.
We choose an isomorphism $\Z_\ell(1)\simeq\Z_\ell$ and a geometric Frobenius lift $\Frob_q\in W_K$.
Let $(r, N)$ be the $\mathcal G$-valued Weil--Deligne representation of $K$ over $\overline{\Q}_\ell$ associated with the generic fiber of $\phi$ using our fixed choices $\Z_\ell(1)\simeq\Z_\ell$ and $\Frob_q$.
We shall show that $r \colon W_K \to \mathcal G(\overline{\Q}_\ell)$ takes values in $\mathcal G(\overline{\Z}_\ell)$ and $N$ belongs to $\Lie(\mathcal G)_{\overline{\Z}_\ell}$.

Choose $\sigma\in I_K$ with $t_\ell(\sigma)=1$, and write $g_i$ and $N_i$ for the actions of $\phi(\sigma)$ and $N$ on $V_i\otimes_{\Z_\ell}\overline{\Q}_\ell$.
The eigenvalues of $g_i$ are roots of unity, and its unipotent part is $\exp(N_i)$.
If an eigenvalue has order divisible by $\ell$, then the inequality $n_i<\ell$ forces the characteristic polynomial of $g_i$ to be irreducible over $\Q_\ell$.
Thus $g_i$ is semisimple and $N_i=0$.
If the semisimple part of $g_i$ has order $e_i$ prime to $\ell$, then $g_i^{e_i}=\exp(e_iN_i)$ is unipotent.
This, together with $(g_i^{e_i}-1)^{n_i}=0$ and $n_i<\ell$, implies that
\[
N_i=\frac{1}{e_i}\log(g_i^{e_i})
=\frac{1}{e_i}\sum_{j=1}^{n_i-1}
\frac{(-1)^{j+1}}{j}(g_i^{e_i}-1)^j
\in\End_{\overline{\Z}_\ell}(V_i\otimes_{\Z_\ell}\overline{\Z}_\ell).
\]
Similarly, $\exp(tN_i)$ is defined over $\overline{\Z}_\ell$ for every $t\in\Z_\ell$.
Since $\mathcal G$ is closed in $\prod_i\GL(V_i)$, it follows that
\[
N\in\Lie(\mathcal G)_{\overline{\Z}_\ell},
\quad
r(W_K)\subset\mathcal G(\overline{\Z}_\ell),
\]
as desired.
The resulting $\mathcal G$-valued Weil--Deligne representation of $K$ over $\overline{\Z}_\ell$ is also denoted by $\rho(\phi)=(r, N)$.
In contrast to the rational case, its equivalence class may depend on the choices of $\Z_\ell(1)\simeq\Z_\ell$ and $\Frob_q$.
The following proposition shows that it is independent of these choices under an additional hypothesis.
\end{ex}

\begin{prop}\label{Proposition: independence of integral WD construction}
Keep the notation and assumptions of \Cref{Example: integral Weil-Deligne construction}.
Assume that $\ell\nmid(q-1)$ and that there exists a geometric Frobenius lift $\Frob_q\in W_K$ such that, for every $i$, any two distinct eigenvalues $a, b$ of $\phi(\Frob_q)$ on $V_i\otimes_{\Z_\ell}\overline{\Q}_\ell$ satisfy $a-b\in \overline{\Z}_\ell^\times$.
Then the equivalence class of $\rho(\phi)$ over $\overline{\Z}_\ell$ is independent of the choices of $\Z_\ell(1)\simeq\Z_\ell$ and the geometric Frobenius lift.
\end{prop}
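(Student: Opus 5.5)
The plan is to treat the two choices separately and, in each case, to exhibit an explicit conjugating element of $\mathcal G(\overline{\Z}_\ell)$ relating the two Weil--Deligne representations. These elements are the integral analogues of the ones in Deligne's rational argument \cite[8.4.3, 8.11]{Deligne73}. Throughout, I work inside $\prod_i\GL(V_i\otimes\overline{\Z}_\ell)$. I use that $\mathcal G$ is closed there, so an element of $\mathcal G(\overline{\Q}_\ell)$ whose entries and inverse entries are integral lies in $\mathcal G(\overline{\Z}_\ell)$.

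\emph{Change of Frobenius lift.} Let $\Frob_q'=\Frob_q\tau$ with $\tau\in I_K$, and let $(r',N)$ be the resulting representation; the operator $N$ does not change. Directly from the formula in \Cref{Example: WD rep ell adic case}, $r'$ and $r$ agree on $I_K$, and $r'(\Frob_q')=r(\Frob_q)\exp(c N)$ for a scalar $c=\pm t_\ell(\tau)\in\Z_\ell$ (up to sign conventions). I look for $x\in\overline{\Z}_\ell$ with
\[
\Ad(\exp(xN))(r,N)=(r',N).
\]
Since $N$ is fixed and $\exp(xN)$ commutes with $r(\sigma)$ up to the cocycle relation, the condition on $\Frob_q$ reduces, via $\Ad(r(\Frob_q))N=q^{-1}N$, to a linear equation of the shape $x(1-q^{-1})=c$. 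The hypothesis $\ell\nmid(q-1)$ makes $1-q^{-1}$ a unit, so $x\in\Z_\ell$. The element $\exp(xN)$ is integral by the same truncated-exponential argument as in \Cref{Example: integral Weil-Deligne construction}, which uses $\ell>n_i$. This step only needs $\ell\nmid(q-1)$.

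\emph{Change of $\Z_\ell(1)\simeq\Z_\ell$.} Rescaling the isomorphism by $u\in\Z_\ell^\times$ replaces $t_\ell$ by $u^{-1}t_\ell$ and $N$ by $uN$. The product $t_\ell N$ is unchanged, so $r$ is unchanged and the new representation is $(r,uN)$. I need $g\in\mathcal G(\overline{\Z}_\ell)$ that centralizes $r(W_K)$ and satisfies $\Ad(g)N=uN$. Put $F=\phi(\Frob_q)$, so that $F$ and $r(\Frob_q)$ have the same eigenvalues. The relation $FNF^{-1}=q^{-1}N$ shows that $N$ maps the generalized $a$-eigenspace $E_a$ into $E_{a/q}$. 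Following Deligne, I take $g=\lambda(u)$, where $\lambda$ is a cocharacter of the Zariski closure $D$ of the group generated by the semisimple part of $F$. On each eigenvalue chain $a,aq^{-1},aq^{-2},\dots$, $\lambda$ acts on $E_{aq^{-k}}$ by the $k$-th power. Its existence comes from a homomorphism from the eigenvalue group (the character group of $D$) to $\Z$ sending $q$ to $-1$, defined after passing to a finite-index torsion-free part. Replacing $\lambda$ by a multiple and using that $F^m$ centralizes the finite group $r(I_K)$ for some $m$, I arrange $\lambda$ to land in $D^\circ$. Then $\lambda(u)$ lies in $\mathcal G$, centralizes $r(W_K)$, and scales $N$ by $u$ (more precisely by a power of $u$; I fix the normalization accordingly).

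\emph{Main obstacle: integrality of $\lambda(u)$.} Here the eigenvalue hypothesis enters. Every element of $D$, in particular $\lambda(u)$, acts on $V_i$ as $\sum_a c_a P_a$ with $c_a$ units, where $P_a$ is the projector onto $E_a$. Writing $P_a$ by Lagrange interpolation as a polynomial in $F$ with denominators $\prod_{b\neq a}(a-b)$ shows $P_a\in\End(V_i\otimes\overline{\Z}_\ell)$, since these denominators are units by assumption (the semisimple-part issue is handled the same way). Hence $\lambda(u)$ and $\lambda(u)^{-1}=\lambda(u^{-1})$ are integral, so $\lambda(u)\in\mathcal G(\overline{\Z}_\ell)$. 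Finally, I must check that the hypothesis, stated for one Frobenius lift, suffices for all of them. The plan is to do the change of $\Z_\ell(1)$ first, for this particular lift, and then move to an arbitrary lift by the previous paragraph, which needs no eigenvalue hypothesis.
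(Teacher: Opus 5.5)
Your proposal is correct and follows essentially the paper's argument. You first move to the given Frobenius lift by conjugating with $\exp(xN)$, where $x\in\Z_\ell$ is obtained by dividing by the unit $q-1$; note that the relation should read $r'(\Frob_q')=r(\Frob_q')\exp(cN)$, not $r(\Frob_q)\exp(cN)$. You then absorb the change of $\Z_\ell(1)\simeq\Z_\ell$ by an element of the diagonalizable group generated by the semisimple part of $\phi(\Frob_q)$. This element centralizes $r(W_K)$ because the identity component of that group lies in the closure of the group generated by a suitable power (this, rather than ``arranging $\lambda$ to land in $D^\circ$'', is the relevant point), and it acts on $N$ through a character $\psi$ of infinite order.

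The only difference is packaging. You take a cocharacter $\lambda$ with $k=\langle\psi,\lambda\rangle\neq0$ and check integrality of $\lambda(u^{1/k})$ via integral spectral projectors, whereas the paper takes the schematic closure over $\overline{\Z}_\ell$ and uses $D(\overline{\Z}_\ell)=\Hom(X^*(D),\overline{\Z}_\ell^\times)$. Both versions need the divisibility of $\overline{\Z}_\ell^\times$, because $k=\pm1$ cannot always be achieved (e.g.\ when $q$ becomes a square in the eigenvalue group modulo torsion). So your ``normalization'' must mean extracting a $k$-th root of $u$ in $\overline{\Z}_\ell^\times$.
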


\begin{proof}
We follow the arguments of \cite[Lemma~8.4.3 and Variant~8.11]{Deligne73}.
If $N=0$, then $r=\phi|_{W_K}$, and the assertion is immediate.
We henceforth assume that $N\neq0$.

Since $q-1\in\Z_\ell^\times$, the same argument as in \cite[Lemma~8.4.3]{Deligne73} shows that, with the identification $\Z_\ell(1)\simeq\Z_\ell$ fixed, changing the geometric Frobenius lift yields an equivalent $\mathcal{G}$-valued Weil--Deligne representation over $\overline{\Z}_\ell$.

Changing the identification $\Z_\ell(1)\simeq\Z_\ell$ leaves $r$ unchanged and replaces $N$ by $aN$ for some $a\in\Z_\ell^\times$.
Let $x=\phi(\Frob_q)$ and let $x_s$ be its semisimple part.
By the assumption, the eigenspace decomposition of $x_s$ on each $V_i\otimes_{\Z_\ell} \overline{\Q}_\ell$ is defined over $\overline{\Z}_\ell$.
It follows that $x_s$ belongs to $\mathcal G(\overline{\Z}_\ell)$.
Since $r(I_K)$ is finite, there is an integer $m>0$ such that $x^m_s$ commutes with $r(I_K)$.
Let $D$ be the schematic closure of $\langle x_s^m\rangle$ in ${\mathcal G}_{\overline{\Z}_\ell}$.
Then $D$ is diagonalizable and commutes with $r(W_K)$.
The relation $\Ad(x_s^m)(N)=q^{-m}N$ shows that $D$ acts on $N$ via a character $\psi\colon D\to\GG_{m,\overline{\Z}_\ell}$.
Since $\psi(x_s^m)=q^{-m}$, the order of $\psi$ is infinite.
We claim that $\psi\colon D(\overline{\Z}_\ell)\to \overline{\Z}_\ell^\times$ is surjective.
Indeed, under the identification $D(\overline{\Z}_\ell)=\Hom(X^*(D),\overline{\Z}_\ell^\times)$, this map is the restriction
\[
\Hom(X^*(D),\overline{\Z}_\ell^\times)
\longrightarrow
\Hom(\Z\psi,\overline{\Z}_\ell^\times)\simeq \overline{\Z}_\ell^\times.
\]
Since $\overline{\Z}_\ell^\times$ is divisible, every homomorphism on $\Z\psi\subset X^*(D)$ extends to $X^*(D)$.
An element $h\in D(\overline{\Z}_\ell)$ with $\psi(h)=a$ therefore conjugates $(r,N)$ to $(r,aN)$.
\end{proof}

For later use, we record the following lemma.

\begin{lem}\label{Lemma: smoothness of Frobenius centralizer}
Keep the notation of
\Cref{Example: integral Weil-Deligne construction}.
Let $\rho=(r,N)$ be a $\cG$-valued Weil--Deligne representation of $K$ over $\overline{\Z}_\ell$ whose generic fiber is Frobenius semisimple.
Assume that there exists a geometric Frobenius lift $\Frob_q\in W_K$ such that, for every $i$, any two distinct eigenvalues $a, b$ of $r(\Frob_q)$ on $V_i\otimes_{\Z_\ell}\overline{\Q}_\ell$ satisfy $a-b\in\overline{\Z}_\ell^\times$.
Then the scheme-theoretic centralizer $ Z_{\cG_{\overline{\Z}_\ell}}(r(\Frob_q)) $ is smooth over $\overline{\Z}_\ell$.
Moreover, $ Z_{\cG_{\overline{\Z}_\ell}}(r(\Frob_q)) $ contains a maximal torus of $\cG_{\overline{\Z}_\ell}$.
\end{lem}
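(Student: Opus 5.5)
Write $x=r(\Frob_q)\in\cG(\overline{\Z}_\ell)$. Since the generic fiber of $\rho$ is Frobenius semisimple, $x$ is semisimple in $\cG(\overline{\Q}_\ell)$. We will first show that $x$ generates a diagonalizable closed subgroup scheme of $\cG_{\overline{\Z}_\ell}$, and then use the general fact that centralizers of diagonalizable (multiplicative type) subgroups of smooth affine groups are smooth.

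\emph{Step 1: $x$ is diagonalizable over $\overline{\Z}_\ell$ on each $V_i$.} On each $V_i\otimes_{\Z_\ell}\overline{\Q}_\ell$ the element $x$ is diagonalizable with distinct eigenvalues $a_1,\dots,a_k$. The hypothesis $a_j-a_{j'}\in\overline{\Z}_\ell^\times$ for $j\neq j'$ lets us form the Lagrange idempotents
\[
e_j=\prod_{j'\neq j}\frac{x-a_{j'}}{a_j-a_{j'}},
\]
which lie in $\End_{\overline{\Z}_\ell}(V_i\otimes\overline{\Z}_\ell)$. This is the same argument as in the proof of \Cref{Proposition: independence of integral WD construction}. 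Hence $V_i\otimes\overline{\Z}_\ell$ splits as a direct sum of free eigenmodules.

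So $x$ lies in a split torus $T_0\subset\prod_i\GL(V_i)_{\overline{\Z}_\ell}$, namely a diagonal torus in suitable bases. Let $D$ be the schematic closure of $\langle x\rangle$ in $\cG_{\overline{\Z}_\ell}$. It is a closed subgroup scheme of $T_0$ that is flat over $\overline{\Z}_\ell$. A flat closed subgroup of a split torus over a normal base is diagonalizable, since it is cut out by a subgroup of the character group. We should work over a finite extension $\cO$ of $\Z_\ell$ (a DVR) containing all eigenvalues, and then base change at the end.

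\emph{Step 2: smoothness of the centralizer.} The scheme-theoretic centralizer of $x$ in $\cG$ equals the centralizer of $D$. The centralizer of a group of multiplicative type acting on a smooth affine group scheme is smooth, by SGA3 Exp.~XI, 5.3 or Conrad's \emph{Reductive group schemes}, Lemma~2.2.4. Applying this over $\cO$ and base changing gives smoothness of $Z_{\cG_{\overline{\Z}_\ell}}(r(\Frob_q))$.

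\emph{Step 3: maximal torus.} Let $Z$ denote this centralizer. Its fibers are centralizers of the diagonalizable groups $D_s$ in reductive groups, so $Z^\circ$ is a smooth group scheme with reductive connected fibers (SGA3 / Conrad, Lemma~2.2.4 again). Thus $Z^\circ$ is reductive, and it contains $D^\circ$ centrally. The plan is to choose a maximal torus $T$ of $Z^\circ$, which exists étale locally and hence over $\overline{\Z}_\ell$, after passing to the strict henselization, which is fine since $\overline{\Z}_\ell$ is a colimit of henselian DVRs with separably closed residue field. On geometric fibers, $D$ lies in some maximal torus $T'$ of $\cG$, since it is diagonalizable in a connected reductive group, modulo $\pi_0$ issues. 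That $T'$ lies in $Z^\circ$, so $Z^\circ$ has the same rank as $\cG$. Hence $T$ is a maximal torus of $\cG_{\overline{\Z}_\ell}$ fiberwise, and therefore globally.

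The main obstacle is Step 3's fiberwise claim that $D_s$ lies in a maximal torus of $\cG_s$. We handle it as follows. The element $x_s$ is semisimple in the special fiber: its reduction is diagonalizable because the idempotents $e_j$ reduce well and the eigenvalues stay distinct mod $\ell$. In a connected reductive group over an algebraically closed field, every semisimple element lies in a maximal torus $T'$. Then $T'\subset Z(x_s)^\circ$ gives the rank equality, and we conclude as in Step 3.
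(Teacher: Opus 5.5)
Your proposal is correct and follows essentially the same route as the paper: integrality of the eigenspace decomposition of $x$, diagonalizability of the schematic closure $D$ of $\langle x\rangle$ with $Z_{\cG_{\overline{\Z}_\ell}}(x)=Z_{\cG_{\overline{\Z}_\ell}}(D)$, smoothness via Conrad's Lemma~2.2.4, and the maximal torus via Conrad's Remark~3.1.5 and Theorem~3.2.6. Your Step~3 unpacks that last citation: the fiberwise rank comparison through the semisimple element $x_s$ ensures that a maximal torus of $Z^\circ$, which exists \'etale locally and hence over $\overline{\Z}_\ell$, is a maximal torus of $\cG_{\overline{\Z}_\ell}$.
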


\begin{proof}
Put $x\coloneqq r(\Frob_q)$.
As in the proof of \Cref{Proposition: independence of integral WD construction}, the assumptions imply that the eigenspace decomposition of $x$ on each $V_i\otimes_{\Z_\ell} \overline{\Q}_\ell$ is defined over $\overline{\Z}_\ell$.
Let $D$ be the schematic closure of $\langle x\rangle$ in $\mathcal G_{\overline{\Z}_\ell}$.
Then $D$ is diagonalizable and
\[
Z_{\cG_{\overline{\Z}_\ell}}(x)
=Z_{\cG_{\overline{\Z}_\ell}}(D).
\]
Since $Z_{\cG_{\overline{\Z}_\ell}}(D)$ is smooth by \cite[Lemma~2.2.4]{Conrad2014}, the first assertion follows.
The second assertion follows from \cite[Remark~3.1.5 and Theorem~3.2.6]{Conrad2014}.
\end{proof}

\subsection{Purity}\label{Subsection:purity of WD representation}
Let $\rho=(r, N)$ be a Weil--Deligne representation of $K$ over $\overline{\Q}_\ell$ and let $V$ be its underlying $\overline{\Q}_\ell$-vector space.
We note that the monodromy operator $N$ is nilpotent.
Thus it induces the monodromy filtration $\{M_\bullet V\}$ on $V$; see \cite[Proposition~(1.6.1)]{WeilII}.
Let $i$ be an integer.
We say that $\rho$ is \textit{pure of weight $i$} if the eigenvalues of $r(\Frob_q)$ on $\gr_j^{M}V \coloneqq M_{j}V/M_{j-1}V$ are Weil $q^{i+j}$-numbers for any geometric Frobenius lift $\Frob_q \in W_K$.

\begin{conj}[Weight--monodromy conjecture]
    \label{Conjecture:WMC}
    Let $X$ be a smooth proper scheme over $K$.
    Then, for every prime $\ell$ and every integer $i$, the Weil--Deligne representation associated with $H^i_{\et}(X_{\overline K},\Q_\ell)$ is pure of weight $i$.
\end{conj}

The conjecture is a local analogue of the Weil conjectures:
in the good-reduction case, the monodromy operator vanishes,
and the assertion reduces to purity for the special fiber.
Several cases are known for $\ell\neq p$; see, for example,
\cite[Theorem~3.2]{Ito21} and the references therein.
(We note that the equal-characteristic analogue is already known.)
The conjecture is also known for $\ell=p$
in many of these cases; see \cite{BKV25}
and the references therein.
It remains open in general in dimension at least three.
For our argument, we need only the following low-degree case.

\begin{lem}\label{lem:WMC in degree 2}
    Let $X$ be a smooth proper scheme over $K$.
    The weight--monodromy conjecture \textup{(\Cref{Conjecture:WMC})} holds for $i\leq 2$.
\end{lem}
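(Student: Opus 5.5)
We will treat the degrees $i=0,1,2$ separately, and within each degree the cases $\ell\neq p$ and $\ell=p$. Purity is insensitive to finite extensions of $K$: the monodromy filtration is unchanged, and a geometric Frobenius lift of $L$ acts as a power of $\Frob_q$ up to inertia acting through a finite quotient. So throughout we may replace $K$ by a finite extension. By de Jong's alterations, the cohomology $H^i_\et(X_{\overline K},\Q_\ell)$ is a direct summand, as a Galois representation, of $H^i$ of a proper strictly semistable model after finite base change. This uses the trace map for a generically finite surjection $X'\to X$ of smooth proper varieties. Taking a direct summand preserves purity, so we may assume $X$ has a proper strictly semistable model $\mathcal X$ over $\cO_K$.

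For $i=0$ there is nothing to prove: $H^0$ is a sum of Artin representations, $N=0$, and Frobenius eigenvalues are roots of unity. For $i=1$ we use $H^1_\et(X_{\overline K},\Q_\ell)\simeq V_\ell(\Pic^0_{X}{}_{,\mathrm{red}})^\vee$, up to twist, which is the dual Tate module of the Picard variety, an abelian variety. The weight--monodromy conjecture for abelian varieties then follows from Grothendieck's semistable reduction theorem and the Raynaud uniformization, which describes the filtration toric/abelian/étale parts explicitly. This works for $\ell\ne p$ (SGA 7, Exp.~IX). For $\ell=p$ the same structure is visible on $D_{\pst}$ via the $p$-adic uniformization, or via compatibility of $D_{\pst}$ with the $\ell$-adic Weil--Deligne representation for abelian varieties.

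For $i=2$ and $\ell\neq p$, we use the Rapoport--Zink weight spectral sequence of the semistable model. Its $E_1$ terms are cohomology of smooth proper intersections of components over $\F_q$, which are pure by Deligne. The spectral sequence degenerates at $E_2$, and the weight--monodromy statement reduces to the hard-Lefschetz-type isomorphism $N^k\colon \gr^M_k\to\gr^M_{-k}$ for $H^2$. Since the monodromy filtration is determined by $N$ and the weight filtration, it suffices to show that $N^2\colon \gr^W_4\to \gr^W_0$ and $N\colon\gr^W_3\to\gr^W_1$ are isomorphisms. For the first map, the relevant $E_2$ terms involve $H^0$ of the triple intersections and a combinatorial (dual complex) piece; this is Rapoport--Zink's low-degree argument, and it is exactly the classical known case (cf.\ \cite[Theorem~3.2]{Ito21}). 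The second map reduces to the $H^1$ pieces of double intersections and components, where one uses the $i=1$ case for curves and abelian varieties (Picard/Albanese) together with the monodromy pairing. Rather than redo this, I would cite it.

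For $\ell=p$ we use the $p$-adic analogue of the weight spectral sequence, due to Mokrane, for log-crystalline cohomology of the special fiber. The Hyodo--Kato comparison (Tsuji, $C_{\st}$) identifies the resulting $(\varphi,N)$-module with $D_{\pst}$. The weights on its $E_1$ terms come from crystalline cohomology of smooth proper varieties, which is pure by Katz--Messing. The same low-degree combinatorics then gives the isomorphisms of $N$ in degree $2$.

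The main obstacle is the $N\colon\gr_3\to\gr_1$ isomorphism in degree two, uniformly for $\ell=p$ and $\ell\neq p$. If a direct argument fails, I would instead cite the literature: \cite[Theorem~3.2]{Ito21} for $\ell\ne p$ and \cite{BKV25} or Mokrane for $\ell=p$. I would then present the proof as a reduction via alterations plus these references.
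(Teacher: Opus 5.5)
Your preliminary reductions (finite extensions, de Jong's alterations with the trace splitting, passage to direct summands) and the cases $i=0,1$ are fine. The gap is the case $i=2$, which is the real content of the lemma. You correctly reduce to showing that $N^2\colon\gr^W_4\to\gr^W_0$ and $N\colon\gr^W_3\to\gr^W_1$ are isomorphisms on the $E_2$ page, but you then attribute this to ``Rapoport--Zink's low-degree argument''. That argument is for surfaces, where two special features make it work. First, the double intersections are curves, so the Gysin map $H^0(Y^{(3)})(-2)\to H^2(Y^{(2)})(-1)$ is the transpose of the combinatorial restriction $H^0(Y^{(2)})\to H^0(Y^{(3)})$. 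Second, Poincar\'e duality pairs $H^2$ with itself, which gives $\dim\gr^W_{2+k}H^2=\dim\gr^W_{2-k}H^2$.

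For $d=\dim X\ge 3$ both features fail. The piece $\gr^W_4H^2$ is the kernel of a map recording the divisor classes of the triple intersections on the $(d-1)$-dimensional double intersections, so this is no longer linear algebra on the dual complex. Duality now pairs $H^2$ with $H^{2d-2}$. For $\gr^W_3\to\gr^W_1$, the $i=1$ case is not the relevant input. What you need is that the pairing $\int a\cup b\cup h^{d-2}$ on $H^1(Y^{(2)})$ is nondegenerate on the image of restriction from $H^1(Y^{(1)})$. Your $\ell=p$ paragraph has the same gap. The fallback to Mokrane does not close it, since his theorem is for semistable varieties of dimension at most two.

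What is missing is either a reduction to surfaces or a genuine completion of your direct argument. For the first route:
after alterations take $X$ projective, and choose by Bertini a smooth complete-intersection surface $S\subset X$ over $K$. Weak Lefschetz gives $H^2(X_{\overline K})\hookrightarrow H^2(S_{\overline K})$. Hard Lefschetz (the Gysin map followed by the inverse of $L\colon H^2\simeq H^4(1)$ on a threefold section) makes this a $G_K$-equivariant direct summand. The splitting is essential: purity passes to direct summands but not to sub-Weil--Deligne representations. For example, the line $\Q_\ell\subset H^1_\et$ of a Tate curve is stable under $W_K$ and $N$ but is not pure of weight $1$. Then Rapoport--Zink with de Jong ($\ell\neq p$), and Mokrane with Tsuji's $C_{\mathrm{st}}$ ($\ell=p$), apply.

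For the second route:
injectivity of both $E_2$-maps follows from positivity. For $\gr^W_4\to\gr^W_0$ use the degree pairing on $H^0(Y^{(3)})$. For $\gr^W_3\to\gr^W_1$ use a polarization restricted to the abelian subvariety $\mathrm{im}(\Pic^0_{Y^{(1)}}\to\Pic^0_{Y^{(2)}})$. Surjectivity then follows from $\dim\gr^W_{2+k}H^2=\dim\gr^W_{2-k}H^2$. This equality comes from hard Lefschetz $L^{d-2}\colon H^2\simeq H^{2d-2}(d-2)$ on $X_{\overline K}$ together with Poincar\'e duality.

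For comparison, the paper gives no argument. It cites \cite[Lemma~3.9]{Saito03} for $\ell\neq p$ and \cite[Theorem~10.5]{Shi22} for $\ell=p$.
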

\begin{proof}
    For $\ell\neq p$, see \cite[Lemma~3.9]{Saito03}.
    For $\ell=p$, see \cite[Theorem~10.5]{Shi22}.
\end{proof}

Let $\bG$ be a reductive group over $\Q$.
We next extend the notion of purity to $\bG$-valued Weil--Deligne representations.
Let $\mu \colon \GG_m \to \bG$ be a cocharacter over $\Q$ whose image is contained in the center $Z(\bG)$ of $\bG$.
Let $\ell$ be a prime number, and let $\rho=(r,N)$ be a $\bG$-valued Weil--Deligne representation of $K$ over $\overline{\Q}_{\ell}$.
We say that $\rho$ is \textit{pure} (with respect to $\mu$) if for every finite-dimensional $\bG$-representation $V$ over $\overline{\Q}_{\ell}$ such that $\mu(z)$ acts on $V$ as $z^i$ for some integer $i$, the induced Weil--Deligne representation on $V$ is pure of weight $i$.

Let $\rho=(r,N)$ be a $\bG$-valued Weil--Deligne representation of $K$ over $\overline{\Q}_{\ell}$ and assume that $\rho$ is pure.
By Tannakian formalism, there exists a unique cocharacter $w \colon \GG_m \to \bG_{\overline{\Q}_{\ell}}$ with the following property: Let $V$ be a finite-dimensional $\bG$-representation over $\overline{\Q}_{\ell}$.
Since $\mu$ is central, we have a decomposition $V=\bigoplus_i V_i$ where $V_i$ is the subspace on which $\mu(z)$ acts as $z^i$.
Let $V_i=\bigoplus_j V_{i, j}$ be a decomposition into $W_K$-representations where the eigenvalues of $\Frob_q$ acting on $V_{i, j}$ are Weil $q^{i+j}$-numbers for any geometric Frobenius lift $\Frob_q \in W_K$.
Then $w(z)$ acts on $V_{i, j}$ as $z^j$.

\begin{defn}\label{Definition:weight cocharacter}
     The cocharacter $w \colon \GG_m \to \bG_{\overline{\Q}_{\ell}}$ characterized by the above property is called the \emph{Frobenius-weight cocharacter} of $r$.
\end{defn}

\begin{lem}\label{Lemma:weight cocharacter is associated with the monodromy operator}
    Let $\rho=(r,N)$ be a $\bG$-valued Weil--Deligne representation of $K$ over $\overline{\Q}_{\ell}$ and assume that $\rho$ is pure.
    Let $w \colon \GG_m \to \bG_{\overline{\Q}_{\ell}}$ be the Frobenius-weight cocharacter of $r$ and put $\lambda\coloneqq w^{-1}$.
    Then $\lambda$ is associated to $N$ in the sense of \textup{\cite[5.3, Definition]{Jantzen}}.
\end{lem}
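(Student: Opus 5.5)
Recall from Jantzen that a cocharacter $\lambda\colon\GG_m\to\bG_{\overline{\Q}_\ell}$ is associated to a nilpotent $N$ if two conditions hold. First, $\Ad(\lambda(z))N=z^2N$. Second, $\lambda$ takes values in the derived group of a Levi subgroup $L$ of $\bG_{\overline{\Q}_\ell}$ in which $N$ is distinguished. Jantzen also shows that, for $\bG$ reductive over characteristic zero, this is equivalent to requiring that $\lambda$ arise from an $\fsl_2$-triple $(N,H,N^-)$ with $d\lambda(1)=H$. In that situation every such $\lambda$ is of the form $\lambda_{\mathrm{triple}}\cdot\lambda'$, where $\lambda'$ is a cocharacter of the centralizer that is trivial on the relevant derived group. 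The strategy is therefore to show that $\lambda=w^{-1}$ agrees, up to conjugation by the centralizer of $N$, with the cocharacter of an $\fsl_2$-triple through $N$. Alternatively, we check the two conditions of the definition directly.

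\textbf{Step 1: the weight of $N$.} We show $\Ad(\lambda(z))N=z^2N$, which amounts to $N$ lying in the $w$-weight $-2$ part of $\Lie(\bG)$. Apply purity to the adjoint representation, on which $\mu$ acts trivially ($i=0$). Purity says that on $\gr^M_j\Lie(\bG)$ the Frobenius eigenvalues are Weil $q^{j}$-numbers, so $w$ acts on the part lifting $\gr^M_j$ by $z^j$. The relation $\Ad(r(\Frob_q))N=q^{-1}N$ shows $N$ is a Frobenius eigenvector with eigenvalue of weight $-2$. Hence $N$ lies in the $z^{-2}$-eigenspace of $w$, and so $\Ad(\lambda(z))N=z^2N$. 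The same reasoning works in any representation $V$ with $\mu$-weight $i$. There $w$ splits the monodromy filtration of $N$ on $V$: we have $M_jV=\bigoplus_{j'\le j}V_{i,j'}$, because each $V_{i,j'}$ is $r(W_K)$-stable and the weights separate the graded pieces.

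\textbf{Step 2: $\lambda$ comes from an $\fsl_2$-triple.} The key input is Step 1's conclusion that $\lambda$ splits the monodromy filtration of $N$ on every representation. We can then apply Deligne's characterization (Weil II, 1.6) in Tannakian form. By Jacobson--Morozov, choose an $\fsl_2$-triple $(N,H,N^-)$ in $\Lie(\bG)_{\overline{\Q}_\ell}$ with associated cocharacter $\lambda_0$ satisfying $d\lambda_0(1)=H$. Then $\lambda_0$ also splits the monodromy filtration of $N$ in every representation, with $\lambda_0$ acting on $\gr^M_{j}$ by $z^{-j}$ in our sign convention, and $\lambda$ does the same. The main point, and the expected obstacle, is to show that two such splittings are conjugate under the unipotent radical $U$ of the parabolic $P$ stabilizing the filtration. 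Here $P$ is the parabolic attached to $\lambda_0$, and the monodromy filtration on $\Lie(\bG)$ defines $P$ and $U$ intrinsically from $N$.

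To handle this, I would argue as follows. The cocharacters $\lambda$ and $\lambda_0$ both lie in $P$ and induce the same action on $\gr^M$. Hence $\lambda\lambda_0^{-1}$ acts trivially on the associated graded, though it is not a homomorphism in general. Since $\lambda(\GG_m)$ is a torus in $P$, it is conjugate by $U$ into the Levi $L_0=Z(\lambda_0)$. The image in $L_0\simeq P/U$ is then determined by the action on $\gr^M$, which is the same as that of $\lambda_0$. So $u\lambda u^{-1}=\lambda_0$ for some $u\in U$.

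\textbf{Step 3: conclusion.} Next we need $u$ to fix $N$, which we get by adjusting $u$. Both $\lambda$ and $\lambda_0$ scale $N$ by $z^2$, so $\Ad(u)N-N$ has weight $2$ for $\lambda_0$. We can then use the standard fact (Jantzen 5.3--5.5 / Kostant) that the set of cocharacters associated to $N$ is a single $Z_\bG(N)^\circ$-orbit. Equivalently, any cocharacter $\lambda$ with $\Ad(\lambda(z))N=z^2N$ that splits the monodromy filtration is associated to $N$; this is Jantzen's Proposition 5.5 type statement, using that $\lambda$ lies in $P$. Applying this to $\lambda$ shows that $\lambda$ is associated to $N$.
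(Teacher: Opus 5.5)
Your Steps 1 and 2 are sound. Purity of the adjoint representation and of a faithful representation shows that $\Ad(\lambda(z))N=z^2N$ and that $\lambda$ splits the monodromy filtration, acting by $z^{-j}$ on $\gr^M_j$. Your argument that $u\lambda u^{-1}=\lambda_0$ for some $u\in U=U(\lambda_0)$ is also correct: conjugate $\lambda$ by $U$ into the Levi subgroup $L_0$, then compare images in $P/U$, which embeds into $\GL(\gr^M V)$ for faithful $V$.

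The gap is Step 3. You say $u$ must be ``adjusted'' to fix $N$, but you never give an adjustment. You then invoke, as a standard fact, that any cocharacter scaling $N$ by $z^2$ and splitting the monodromy filtration is associated to $N$. That is exactly the statement being proved, so your last sentence is circular. It is not what \cite[5.5, Proposition]{Jantzen} says: that result needs an $\fsl_2$-triple $(N,d\lambda(1),F)$, which you never construct for $\lambda$ itself. Nor is it equivalent to the fact that associated cocharacters form a single $Z_{\bG}(N)^\circ$-orbit, since that fact only helps once $\lambda$ is already known to be associated.

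The missing observation is that no adjustment is needed. Since $u\in U(\lambda_0)$, $\Ad(u)$ preserves the filtration $\mathfrak{g}_{\geq n}=\bigoplus_{k\geq n}\mathfrak{g}(\lambda_0,k)$ and acts trivially on its graded pieces. As $N\in\mathfrak{g}(\lambda_0,2)$, this gives $\Ad(u)N-N\in\mathfrak{g}_{\geq 3}$. You showed $\Ad(u)N-N\in\mathfrak{g}(\lambda_0,2)$, so $\Ad(u)N=N$. Hence $\lambda=u^{-1}\lambda_0u$ with $u\in Z_{\bG}(N)$. Since $\lambda_0$ is associated to $N$ by Jantzen's 5.5, conjugation-equivariance of the definition shows that $\lambda$ is associated to $\Ad(u^{-1})N=N$. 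With this line your route works entirely inside $\bG$.

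The paper's route is shorter and never introduces $\lambda_0$. In a faithful $V$, purity gives $N(V(\lambda,j))\subset V(\lambda,j+2)$ and $N^j\colon V(\lambda,-j)\isomto V(\lambda,j)$ for $j\geq0$. Jacobson--Morozov (Weil~II, 1.6) then completes $(N,d\lambda(1))$ to an $\fsl_2$-triple in $\End(V)$. Jantzen's 5.5 gives associatedness in $\GL(V)$, and his 5.12 descends it to $\bG$. In other words, the paper proves directly the ``standard fact'' you assumed.
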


\begin{proof}
We choose a faithful representation $V$ of $\bG_{\overline{\Q}_\ell}$.
By \cite[5.12, Claim]{Jantzen}, it suffices to show that $\lambda$ is associated to $N$ in $\GL(V)$.
We write $V=\bigoplus_{j \in \Z} V(\lambda, j)$ for the weight decomposition with respect to $\lambda$.
We have $N(V(\lambda, j))\subset V(\lambda, j+2)$ and $N^j\colon V(\lambda, -j)\tisoto V(\lambda, j)$ for $j\geq0$ by purity.
By the Jacobson--Morozov theorem, these isomorphisms give an $\fsl_2$-triple $(N,d\lambda(1),F)$ in $\End_{\overline{\Q}_\ell}(V)$.
Hence $\lambda$ is associated to $N$ in $\GL(V)$ by \cite[5.5, Proposition]{Jantzen}.
See also \cite[Proposition~(1.6.9)]{WeilII}.
\end{proof}

The following proposition generalizes \cite[Lemma~1.4(4)]{TaylorYoshida} to general $\bG$.
This also follows from \cite[Lemma~3.5]{GanHarrisSawinBeuzartPlessis}.
Here, we give a proof in terms of associated cocharacters.

\begin{prop}\label{Proposition:uniqueness of monodromy operator rational case}
Let $\rho_i=(r_i,N_i)$, for $i=1,2$, be Frobenius semisimple $\bG$-valued Weil--Deligne representations of $K$ over $\overline{\Q}_{\ell}$ which are pure with respect to $\mu$.
If $r_1$ and $r_2$ are $\bG(\overline{\Q}_{\ell})$-conjugate, then
\[
    \rho_1 \sim_{\bG(\overline{\Q}_{\ell})} \rho_2.
\]
\end{prop}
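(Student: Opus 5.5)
First I conjugate $\rho_2$ so that $r_1=r_2=:r$; it then remains to find $g\in\bG(\overline{\Q}_\ell)$ centralizing $r(W_K)$ with $\Ad(g)(N_1)=N_2$. Let $w$ be the Frobenius-weight cocharacter of $r$ from \Cref{Definition:weight cocharacter}. It is defined using only $r$ and $\mu$: the decomposition $V_i=\bigoplus_j V_{i,j}$ by Weil weights of Frobenius depends only on the $W_K$-action, not on $N$. Hence both $\rho_1$ and $\rho_2$ have the same cocharacter $w$. By \Cref{Lemma:weight cocharacter is associated with the monodromy operator}, $\lambda=w^{-1}$ is then associated to both $N_1$ and $N_2$.

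Next I pass to the centralizer $H\coloneqq Z_{\bG_{\overline{\Q}_\ell}}(r(W_K))$. Since $r(I_K)$ is finite and $r(\Frob_q)$ is semisimple, $H$ is the centralizer of a subgroup whose Zariski closure has reductive identity component (an extension of a diagonalizable group by a finite group). So $H$ is reductive, though possibly disconnected; over characteristic zero this is standard. The cocharacter $w$ factors through $H$. Indeed, $w(z)$ acts by scalars on each $W_K$-stable piece $V_{i,j}$, so it commutes with $r(W_K)$ in every representation, hence in $\bG$ by Tannakian formalism.

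The condition $\Ad(r(\sigma))N=q^{\alpha(\sigma)}N$ says that $N_1$ and $N_2$ lie in the subspace
\[
\mathfrak{h}' \coloneqq \{X\in\Lie(\bG)_{\overline{\Q}_\ell} : \Ad(r(\sigma))X=q^{\alpha(\sigma)}X \text{ for all } \sigma\in W_K\}.
\]
Moreover, since $\lambda$ is associated to $N_k$, we have $\Ad(\lambda(t))N_k=t^2N_k$. So $N_1,N_2$ lie in the $t^2$-eigenspace $\mathfrak{g}(\lambda,2)$ of $\lambda$ intersected with $\mathfrak{h}'$. Both the $r$-action and $\lambda$ are semisimple and commute, and $H\supset \lambda(\GG_m)$ acts on $\mathfrak{g}(\lambda,2)\cap\mathfrak{h}'$.

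The key step, and the main obstacle, is to show that $N_1$ and $N_2$ are conjugate under $Z_H(\lambda)^\circ$. I would use the standard fact from associated-cocharacter theory (\cite[5.9]{Jantzen}, Kostant/Kempf–Rousseau): for $\lambda$ associated to a nilpotent $N$, the orbit $P(\lambda)\cdot N$ is open dense in $\mathfrak{g}(\lambda,\geq2)$. The analogue I expect to hold is that the $Z_{\bG}(\lambda)$-orbit of $N$ is open dense in $\mathfrak{g}(\lambda,2)$. Two points of an irreducible space that both have open orbits must lie in the same orbit.

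To descend to $H$, I would use a Vinberg-type graded argument. Decompose $\mathfrak{g}(\lambda,2)$ into generalized eigenspaces for the semisimple commuting operators $\Ad(r(\Frob_q))$ and $\Ad(r(I_K))$, and work with the reductive group $Z_H(\lambda)$ acting on the piece $\mathfrak{g}(\lambda,2)\cap\mathfrak{h}'$. The tangent space of $Z_{\bG}(\lambda)\cdot N$ at $N$ is $[\mathfrak{g}(\lambda,0),N]=\mathfrak{g}(\lambda,2)$, a consequence of $\fsl_2$-theory. Taking the $r$-isotypic part of type ``$q^{\alpha}$'' of this equality gives $[\mathfrak{h}\cap\mathfrak{g}(\lambda,0),N]=\mathfrak{h}'\cap\mathfrak{g}(\lambda,2)$, because $\ad N$ shifts the $r$-character exactly by the character $q^{\alpha}$. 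Hence the $Z_H(\lambda)^\circ$-orbits of $N_1$ and $N_2$ are both open in the irreducible affine space $\mathfrak{h}'\cap\mathfrak{g}(\lambda,2)$. They therefore meet, so some $g\in Z_H(\lambda)^\circ\subset H$ satisfies $\Ad(g)N_1=N_2$. Since $g\in H$, it fixes $r$, and so $\rho_1\sim_{\bG(\overline{\Q}_\ell)}\rho_2$.
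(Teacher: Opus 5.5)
Your proposal is correct and follows essentially the same route as the paper. You fix $r=r_1=r_2$ and note that the Frobenius-weight cocharacter depends only on $r$, so $\lambda=w^{-1}$ is associated to both $N_1$ and $N_2$. You then restrict the surjectivity of $\ad N_k$ to the trivial-to-$q^{\alpha}$ isotypic parts; the paper cites \cite[5.9, Proposition]{Jantzen} for $\bigoplus_{n\geq0}\mathfrak{g}(\lambda,n)\to\bigoplus_{n\geq2}\mathfrak{g}(\lambda,n)$, which is equivalent by grading to your $\mathfrak{g}(\lambda,0)\to\mathfrak{g}(\lambda,2)$. Finally, you conclude that the two orbits of the centralizer of $r$ are open in the same irreducible affine space and hence meet. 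One small point: the reductivity of $H$ that you invoke is not needed, since only smoothness is used and that is automatic in characteristic zero.
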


\begin{proof}
    We may assume that $r \coloneqq r_1=r_2$, and we let $w$ be the Frobenius-weight cocharacter determined by $r$.
    Let $\ge\coloneqq \Lie(\bG)_{\overline{\Q}_{\ell}}$ and let $\ge = \bigoplus_\tau \ge_\tau$ be the isotypic decomposition of $\ge$ as a $W_K$-representation, where $\tau$ runs over the isomorphism classes of irreducible $W_K$-representations over $\overline{\Q}_{\ell}$ and $\ge_\tau$ denotes the $\tau$-isotypic component.
    Put $\lambda \coloneqq w^{-1}$.
    Since $r$ commutes with $\lambda$, each component $\ge_\tau$ is stable under the action of $\lambda$.
    Hence we have the weight decomposition
\[
\ge_\tau=\bigoplus_{n\in \Z}\ge_\tau(\lambda,n),
\]
where $\lambda(z)$ acts on $\ge_\tau(\lambda,n)$ as $z^n$.

Let $P \coloneqq P_{\bG_{\overline{\Q}_{\ell}}}(\lambda)$ be the parabolic subgroup associated with $\lambda$.
We note that $\Lie(P)= \bigoplus_{\tau,n\geq0}\ge_\tau(\lambda,n)$.
By \Cref{Lemma:weight cocharacter is associated with the monodromy operator}, the cocharacter $\lambda$ is associated to both $N_1$ and $N_2$.
By \cite[5.9, Proposition]{Jantzen} (see also \cite[Lemma~5.7]{Cotner}), the map $\bigoplus_{\tau, n \geq 0}\ge_\tau(\lambda,n) \to \bigoplus_{\tau, n \geq 2}\ge_\tau(\lambda,n)$ defined by $Y \mapsto [Y, N_1]$ is surjective.
Since each component $\ge_\tau(\lambda,n)$ is mapped to $\ge_{q^{\alpha} \cdot \tau}(\lambda,n+2)$, where $q^{\alpha} \cdot \tau$ is the $W_K$-representation defined by $g \mapsto q^{\alpha(g)} \cdot \tau(g)$, we see that the induced map $\bigoplus_{n \geq 0}\ge_1(\lambda,n) \to \bigoplus_{n \geq 2}\ge_{q^{\alpha}}(\lambda,n)$ is surjective, where $1$ denotes the trivial $W_K$-representation.

Let $Z_{\bG_{\overline{\Q}_{\ell}}}(r)$ be the scheme-theoretic centralizer of $r(W_K)$.
In our setting, we have
\[
Z_{\bG_{\overline{\Q}_{\ell}}}(r)\subset Z_{\bG_{\overline{\Q}_{\ell}}}(\lambda)\subset P,
\]
and thus the equalities $Z_{\bG_{\overline{\Q}_{\ell}}}(r)\cap P=Z_{\bG_{\overline{\Q}_{\ell}}}(r)$ and $\bigoplus_{n \geq 0}\ge_1(\lambda,n) = \Lie (Z_{\bG_{\overline{\Q}_{\ell}}}(r))$ hold.
We also note that $Z_{\bG_{\overline{\Q}_{\ell}}}(r)$ is smooth since the characteristic is zero.
It follows that the orbit map $Z_{\bG_{\overline{\Q}_{\ell}}}(r) \to \bigoplus_{n \geq 2}\ge_{q^{\alpha}}(\lambda,n)$ defined by $g \mapsto \Ad(g)(N_1)$ has a dense open image.
Similarly, the orbit map $Z_{\bG_{\overline{\Q}_{\ell}}}(r) \to \bigoplus_{n \geq 2}\ge_{q^{\alpha}}(\lambda,n)$ defined by $g \mapsto \Ad(g)(N_2)$ also has a dense open image.
Since these two images are dense open subsets of the irreducible affine space $\bigoplus_{n \geq 2}\ge_{q^\alpha}(\lambda,n)$, they intersect.
In particular, $N_1$ and $N_2$ are conjugate by an element of $Z_{\bG_{\overline{\Q}_{\ell}}}(r)(\overline{\Q}_{\ell})$, as desired.
\end{proof}

\subsection{Purity with integral coefficients}
\label{Subsection:property tf}

Assume that $\bG$ has a reductive model $\cG$ over $\Z[1/D_0]$ for some positive integer $D_0$ divisible by $p$, and $\mu$ extends to a central cocharacter of this model.
Let $\ge\coloneqq \Lie(\cG)$.
Let $\ell \nmid D_0$ be a prime number.

\begin{defn}\label{Definition:integral G-valued purity}
Let $\rho=(r, N)$ be a $\cG$-valued Weil--Deligne representation of $K$ over $\overline{\Z}_\ell$.
We say that $\rho$ is \textit{pure} (with respect to $\mu$) if the following conditions hold:
\begin{enumerate}
\item The generic fiber $\rho_{\overline{\Q}_\ell}$ is pure with respect to $\mu$.
\item The Frobenius-weight cocharacter $w \colon \GG_m \to \bG_{\overline{\Q}_\ell}$ of $r$ extends over $\overline{\Z}_\ell$.
\item The scheme-theoretic centralizer $H\coloneqq  Z_{\cG_{\overline{\Z}_\ell}}(r)$ of $r(W_K)$ is smooth over $\overline{\Z}_\ell$.
\item Put $\lambda\coloneqq w^{-1}$ and
\[
\mathcal V\coloneqq 
\{
X\in\bigoplus_{n\geq2}\ge_{\overline{\Z}_\ell}(\lambda,n) \mid
\Ad(r(\sigma))(X)=q^{\alpha(\sigma)}X
\text{ for every }\sigma\in W_K
 \},
\]
where $\ge_{\overline{\Z}_\ell}=\bigoplus_{n \in \Z}\ge_{\overline{\Z}_\ell}(\lambda,n)$ is the weight decomposition with respect to $\lambda$.
Then the map $ \Lie(H)\to \mathcal V $ defined by $Y\mapsto[Y,N]$ is surjective.
\end{enumerate}
\end{defn}

This definition is motivated by the principle that purity
determines the monodromy operator, up to conjugacy, from
the underlying representation of $W_K$.
These properties hold automatically on the generic fiber
if it is pure and Frobenius semisimple, and they are used
in the proof of
\Cref{Proposition:uniqueness of monodromy operator rational case}.
The following proposition establishes the corresponding
uniqueness statement with integral coefficients.

\begin{prop}\label{Proposition:uniqueness of monodromy operator integral case}
Let $\rho_i=(r_i,N_i)$ for $i=1,2$ be $\cG$-valued Weil--Deligne representations of $K$ over $\overline{\Z}_\ell$ which are pure with respect to $\mu$ (in the sense of \Cref{Definition:integral G-valued purity}) and whose generic fibers are Frobenius semisimple.
If $r_1$ and $r_2$ are $\cG(\overline{\Z}_{\ell})$-conjugate, then
\[
    \rho_1 \sim_{\cG(\overline{\Z}_{\ell})} \rho_2.
\]
\end{prop}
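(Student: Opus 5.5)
We reduce to the case $r\coloneqq r_1=r_2$ by conjugating $\rho_2$ with an element of $\cG(\overline{\Z}_\ell)$. Purity is clearly preserved by such conjugation, since all four conditions of \Cref{Definition:integral G-valued purity} are transported. Both $\rho_1$ and $\rho_2$ then share the same Frobenius-weight cocharacter $w$, because $w$ depends only on $r$. Condition (2) lets us extend $\lambda=w^{-1}$ over $\overline{\Z}_\ell$, which gives a weight decomposition $\ge_{\overline{\Z}_\ell}=\bigoplus_n\ge_{\overline{\Z}_\ell}(\lambda,n)$. The module $\mathcal V$ and the smooth group scheme $H=Z_{\cG_{\overline{\Z}_\ell}}(r)$ are likewise the same for both representations.

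\textbf{Step 1: $N_1,N_2\in\mathcal V$.} On the generic fiber, \Cref{Lemma:weight cocharacter is associated with the monodromy operator} shows that $\lambda$ is associated to $N_i$. In particular $N_i$ lies in $\ge(\lambda,2)\subset\bigoplus_{n\ge2}\ge(\lambda,n)$. Since $N_i\in\ge_{\overline{\Z}_\ell}$ and the weight decomposition is defined integrally, we get $N_i\in\ge_{\overline{\Z}_\ell}(\lambda,2)$. The relation $\Ad(r(\sigma))N_i=q^{\alpha(\sigma)}N_i$ holds by definition of a Weil--Deligne representation, so $N_i\in\mathcal V$.

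\textbf{Step 2: the orbit map is smooth and surjective on fibers.} Consider the orbit morphism $\pi_i\colon H\to\mathcal V$, $h\mapsto\Ad(h)(N_i)$, viewing $\mathcal V$ as an affine space over $\overline{\Z}_\ell$. The first task is to check that $\mathcal V$ is a free direct summand, which holds because $\mathcal V$ is saturated in $\ge_{\overline{\Z}_\ell}$. The differential of $\pi_i$ at the identity is $Y\mapsto[Y,N_i]$, and by condition (4) it is surjective $\Lie(H)\to\mathcal V$. By $H$-equivariance the differential is surjective at every point of $H$. Since $H$ is smooth by condition (3), $\pi_i$ is a smooth morphism, hence open. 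Its image $U_i\subset\mathcal V$ is therefore open and meets both the generic and the special fiber; both fibers are irreducible affine spaces.

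\textbf{Step 3: conjugating $N_1$ to $N_2$ integrally.} This is the main obstacle: we need $h\in H(\overline{\Z}_\ell)$ with $\Ad(h)N_1=N_2$, and rational conjugacy alone is not enough. The plan is to work on the special fiber first. The images of $\pi_1,\pi_2$ in the special fiber $\mathcal V_{\overline{\F}_\ell}$ are dense open, so they intersect. We then want to show that the $H_{\overline{\F}_\ell}$-orbit of $\overline N_i$ is the \emph{unique} dense open orbit in $\mathcal V_{\overline{\F}_\ell}$. This follows since the orbit map is smooth, so the orbit is open, and two open orbits in an irreducible space coincide. Hence $\overline N_2=\Ad(\bar h)\overline N_1$ for some $\bar h\in H(\overline{\F}_\ell)$.

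Next we consider the transporter scheme $T=\pi_1^{-1}(N_2)\subset H$, the fiber of the smooth morphism $\pi_1$ over the $\overline{\Z}_\ell$-point $N_2$. It is smooth over $\overline{\Z}_\ell$ and has the $\overline{\F}_\ell$-point $\bar h$. By Hensel's lemma, applied over the henselian (though non-noetherian) ring $\overline{\Z}_\ell$ after descending to a finite extension of $\Z_\ell$ where everything is defined, $\bar h$ lifts to $h\in T(\overline{\Z}_\ell)$. Such an $h$ centralizes $r$ and satisfies $\Ad(h)N_1=N_2$, which gives $\rho_1\sim_{\cG(\overline{\Z}_\ell)}\rho_2$.

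The delicate points are two. First, the descent to a finite level, where $H$, $\mathcal V$, $N_i$ and the smoothness are all defined over the ring of integers of a finite extension $E/\Q_\ell$. Second, verifying that the smoothness criterion from condition (4) applies fiberwise on the special fiber. For the latter, surjectivity of $\Lie(H)\to\mathcal V$ reduces modulo the maximal ideal, so the special-fiber differential is still surjective.
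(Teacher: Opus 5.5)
Your proof is correct and follows the paper's argument essentially step for step. You reduce to $r_1=r_2$, use condition (4) and smoothness of $H$ to make the orbit maps $H\to\mathcal V$ smooth, intersect their dense open images in $\mathcal V_{\overline{\F}_\ell}$ to obtain $\bar h\in H(\overline{\F}_\ell)$, and lift $\bar h$ through the smooth fiber $\pi_1^{-1}(N_2)$ by henselianity. Your extra remarks (saturation of $\mathcal V$, surjectivity of the differential on both fibers, descent to a finite level) only make explicit details the paper leaves implicit; the descent is not actually needed, since points of smooth schemes lift over any henselian local ring.
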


\begin{proof}
Our argument is inspired by the proof of
\cite[Theorem~5.11]{Cotner}.
We may assume that $r\coloneqq r_1=r_2$.
Let $w \colon \GG_m \to \cG_{\overline{\Z}_\ell}$ be the Frobenius-weight cocharacter of $r$ and put $\lambda\coloneqq w^{-1}$.
Let $H$ and $\mathcal V$ be as in \Cref{Definition:integral G-valued purity}.
The adjoint action of $H$ preserves $\mathcal V$ and $N_1,N_2\in\mathcal V$.
We also write $\mathcal V$ for the corresponding affine space.
For $i=1,2$, consider the orbit map $ f_i\colon H \to \mathcal V, $ $h\mapsto\Ad(h)(N_i)$.
Its differential at the identity is surjective by the assumption.
Since $H$ and $\mathcal V$ are smooth, this implies that $f_i$ is smooth.
In particular, the reduction $(f_i)_{\overline{\F}_\ell}$ has a dense open image for $i=1, 2$.
These two images intersect, and there is an element $\overline h\in H(\overline{\F}_\ell)$ such that $\Ad(\overline h)(N_1 \otimes_{\overline{\Z}_\ell} \overline{\F}_\ell)=N_2 \otimes_{\overline{\Z}_\ell} \overline{\F}_\ell$.
The fiber $f_1^{-1}(N_2)$ is smooth over $\overline{\Z}_\ell$ and has the $\overline{\F}_\ell$-point $\overline h$.
Since $\overline{\Z}_\ell$ is henselian, this point lifts to an element $h\in H(\overline{\Z}_\ell)$ satisfying $\Ad(h)(N_1)=N_2$.
As $h$ centralizes $r$, it conjugates $\rho_1$ to $\rho_2$.
\end{proof}

The following lemma gives a sufficient criterion for integral
purity in terms of torsion-freeness of the cokernel of
the adjoint action of $N$, which will be used in
\Cref{subsection:An integral refinement of strong compatibility}.

\begin{lem}\label{Lemma: sufficient criterion for integral purity}
Let $\rho=(r,N)$ be a $\cG$-valued Weil--Deligne representation of $K$ over $\overline{\Z}_\ell$ whose generic fiber is Frobenius semisimple.
Assume that $\rho$ satisfies conditions (1)--(3) of \Cref{Definition:integral G-valued purity}.
Let $ \ge_{\overline{\Q}_\ell} =\bigoplus_\tau\ge_{\overline{\Q}_\ell,\tau} $ be the isotypic decomposition for the $W_K$-action, and put $ \ge_{\overline{\Z}_\ell,\tau} \coloneqq \ge_{\overline{\Z}_\ell} \cap\ge_{\overline{\Q}_\ell,\tau}. $ Assume that
\[
\ge_{\overline{\Z}_\ell}
=\bigoplus_\tau\ge_{\overline{\Z}_\ell,\tau}
\]
and that the cokernel of
\[
d \colon
\ge_{\overline{\Z}_\ell}\longrightarrow
\ge_{\overline{\Z}_\ell}, \quad Y \mapsto [Y,N]
\]
is torsion-free.
Then $\rho$ is pure with respect to $\mu$.
\end{lem}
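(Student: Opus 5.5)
Since conditions (1)--(3) are assumed, only condition (4) of \Cref{Definition:integral G-valued purity} needs checking: the map $\Lie(H)\to\mathcal V$, $Y\mapsto[Y,N]$, is surjective. The plan is to use the rational surjectivity from the proof of \Cref{Proposition:uniqueness of monodromy operator rational case}, and to pass from $\overline{\Q}_\ell$ to $\overline{\Z}_\ell$ via torsion-freeness of $\operatorname{coker}(d)$ together with the splitting of $\ge_{\overline{\Z}_\ell}$ into isotypic pieces.

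First I identify both sides integrally. Since $H=Z_{\cG_{\overline{\Z}_\ell}}(r)$ is smooth, $\Lie(H)$ is the set of $Y\in\ge_{\overline{\Z}_\ell}$ fixed by $\Ad(r(W_K))$. This is $\ge_{\overline{\Z}_\ell}\cap\ge_{\overline{\Q}_\ell,1}=\ge_{\overline{\Z}_\ell,1}$. Likewise, $\mathcal V$ is contained in $\ge_{\overline{\Z}_\ell}\cap\ge_{\overline{\Q}_\ell,q^\alpha}=\ge_{\overline{\Z}_\ell,q^\alpha}$, intersected with the weight $\geq2$ part for $\lambda$. Note that $\lambda$ commutes with $r$, and by condition (2) it extends integrally, so the weight decomposition is defined over $\overline{\Z}_\ell$ and compatible with the isotypic splitting.

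Second, given $X\in\mathcal V$, I use the rational case. The proof of \Cref{Proposition:uniqueness of monodromy operator rational case}, which uses \Cref{Lemma:weight cocharacter is associated with the monodromy operator} and Jantzen's surjectivity, gives $Y_0\in\bigoplus_{n\geq0}\ge_{\overline{\Q}_\ell,1}(\lambda,n)=\Lie(H)_{\overline{\Q}_\ell}$ with $[Y_0,N]=X$. In particular $X\in\operatorname{im}(d)\otimes\overline{\Q}_\ell$, so $cX\in d(\ge_{\overline{\Z}_\ell})$ for some nonzero $c\in\overline{\Z}_\ell$. Torsion-freeness of $\operatorname{coker}(d)$ then gives $X=[Y,N]$ for some $Y\in\ge_{\overline{\Z}_\ell}$. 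Here $\overline{\Z}_\ell$ is a non-noetherian valuation ring, but torsion-free still means multiplication by $c$ is injective on the cokernel, so the step goes through.

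Third, I replace $Y$ by an element of $\Lie(H)$. Write $Y=\sum_\tau Y_\tau$ using $\ge_{\overline{\Z}_\ell}=\bigoplus_\tau\ge_{\overline{\Z}_\ell,\tau}$. Since $\Ad(r(\sigma))N=q^{\alpha(\sigma)}N$, the map $d$ sends $\ge_{\overline{\Q}_\ell,\tau}$ into $\ge_{\overline{\Q}_\ell,q^\alpha\tau}$, and $\tau\mapsto q^\alpha\tau$ is a bijection on isotypes. Comparing components in the $q^\alpha$-isotypic part, where $X$ lives, gives $[Y_1,N]=X$ with $Y_1\in\ge_{\overline{\Z}_\ell,1}=\Lie(H)$. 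This proves surjectivity, so condition (4) holds and $\rho$ is pure.

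The main obstacle I expect is the identification $\Lie(H)=\ge_{\overline{\Z}_\ell,1}$: this needs smoothness of $H$ and the fact that the invariants of the scheme-theoretic centralizer's Lie algebra are computed inside $\ge_{\overline{\Z}_\ell}$. Since the infinitesimal centralizer of $r(W_K)$ is exactly the $\Ad$-invariants, I expect this to be direct. I also note that no weight projection of $Y_1$ is needed: $\mathcal V$ is defined by the $q^\alpha$-equivariance condition alone, and $Y_1$ already lies in $\Lie(H)$.
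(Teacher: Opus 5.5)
Your argument is correct and is essentially the paper's proof: both combine the rational surjectivity from the proof of \Cref{Proposition:uniqueness of monodromy operator rational case}, the integral isotypic splitting of $\ge_{\overline{\Z}_\ell}$, and torsion-freeness of $\operatorname{coker}(d)$. The paper restricts $d$ to the map $\Lie(H)\to\mathcal V$ and notes that the cokernel of this restriction is torsion-free (as a direct summand of $\operatorname{coker}(d)$) and vanishes after tensoring with $\overline{\Q}_\ell$; you instead lift in all of $\ge_{\overline{\Z}_\ell}$ and then project to the trivial isotypic component, which is that direct-summand step made explicit (and, incidentally, $\Lie(H)=\ge_{\overline{\Z}_\ell}^{W_K}$ holds without using smoothness of $H$).
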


\begin{proof}
It remains to verify condition (4).
Let $\lambda\coloneqq w^{-1}$ and let $\ge_{\overline{\Z}_\ell,\tau}= \bigoplus_{n \in \Z} \ge_{\overline{\Z}_\ell,\tau}(\lambda, n)$ be the weight decomposition with respect to $\lambda$.
As in the proof of \Cref{Proposition:uniqueness of monodromy operator rational case},
\[
d \colon \bigoplus_{n\geq0}
\ge_{\overline{\Z}_\ell,1}(\lambda,n)
\longrightarrow
\bigoplus_{n\geq2}
\ge_{\overline{\Z}_\ell,q^\alpha}(\lambda,n)
\]
is surjective after tensoring with $\overline{\Q}_\ell$.
Its cokernel is a torsion-free $\overline{\Z}_\ell$-module by the assumptions, and hence it is zero.
As $\lambda$ is central in $H$, the source is $\Lie(H)$, and the target is $\mathcal V$.
This shows that $\rho$ satisfies condition (4).
\end{proof}

\section{Weight--monodromy conjecture for hyper-K\"ahler varieties}
\label{Section:Mumford-Tate representation and weight-monodromy}

Throughout this section, let $K$ be a finite extension of $\Q_p$ and let $X$ be a \hk variety over $K$.
We fix an embedding $\overline{K} \hookrightarrow \C$.
Let $\bG\coloneqq\MT(X_\C)$ and $\bG_2 \coloneqq \MT_2(X_\C)$.

In this section, we prove the weight--monodromy conjecture for $X$ as an application of \Cref{thm: MT conjecture}.
After replacing $K$ by a finite extension, the Mumford--Tate conjecture enables us to construct the associated $\bG$-valued Weil--Deligne representations.
Using these representations and the properties of the projection $\bG\to\bG_2$, we deduce the weight--monodromy conjecture in every degree from the degree-two case, which is already known.
We also prove Frobenius semisimplicity by a similar reduction to the degree-two case.

\subsection{Weil--Deligne representations valued in Mumford--Tate groups}
\label{Subsection:MT-WD representation}

We choose a model of $X$ over a finitely generated subfield $F\subset K$.
Let $\overline F\subset\overline K$ be the algebraic closure of $F$.
Let $F^c/F$ be a finite extension in $\overline F$ such that for every $\ell$, the algebraic monodromy group associated with the representation $G_{F^c} \to \GL(H^\bullet_\et(X_{\overline{K}}, \Q_\ell))$ is connected.
Such an extension exists by a theorem of Serre \cite{RibetLetter}; see also \cite[Proposition~(6.14)]{LarsenPink92}.
Let $K^c\coloneqq KF^c$.
Then \Cref{thm: MT conjecture} implies that the action of $G_{K^c}$ on $H^\bullet_\et(X_{\overline{K}}, \Q_\ell)$ factors through $\bG(\Q_\ell)$ for every $\ell$ (including $\ell=p$) under the \'etale--Betti comparison.
We denote this representation by
\[
\phi^{\MT}_\ell \colon G_{K^c}\longrightarrow \bG(\Q_\ell)
\]
and the associated $\bG$-valued Weil--Deligne representation of $K^c$ over $\overline{\Q}_\ell$ by $\rho^{\MT}_\ell$.
Let $\rho_{\ell, i}$ be the Weil--Deligne representation of $K$ over $\overline{\Q}_\ell$ associated with $H^i_\et(X_{\overline{K}}, \Q_\ell)$.
For each $i$, let $p_i\colon\bG\to\GL(H^i_B(X_\C,\Q))$ be the natural representation.
Then $p_i\circ\rho^{\MT}_\ell$ is equivalent to the restriction $\rho_{\ell, i}|_{K^c}$.

Let $s\in\bG(\Q)$ be the \emph{parity operator}, obtained by evaluating the weight cocharacter at $-1$; it acts on $H^i_B(X_\C,\Q)$ as $(-1)^i$.
We use the following consequence of LLV theory: the degree-two projection
\[
 p_2\colon\bG\longrightarrow\bG_2
\]
is a central isogeny with kernel $\langle s \rangle$; see \cite[Lemma~3.2.5]{TZ26}.
This kernel is trivial when the odd cohomology vanishes and has order two otherwise.

Using these facts, we can prove the following result.

\begin{prop}\label{prop:global Frobenius semisimplicity}
For every prime $\ell$ and every $i$, the Weil--Deligne representation $\rho_{\ell, i}$ of $K$ over $\overline{\Q}_\ell$ associated with $H^i_\et(X_{\overline{K}}, \Q_\ell)$ is Frobenius semisimple.
The $\bG$-valued Weil--Deligne representation $\rho^{\MT}_\ell$ is also Frobenius semisimple.
\end{prop}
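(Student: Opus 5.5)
Frobenius semisimplicity is insensitive to finite extensions (as noted in \Cref{Subsection:G-WD representation}), so we work over $K^c$. Since $p_i\circ\rho^{\MT}_\ell$ is equivalent to $\rho_{\ell,i}|_{K^c}$ for every $i$, and $\bigoplus_i p_i$ is faithful, Frobenius semisimplicity of every $\rho_{\ell,i}$ is equivalent to that of $\rho^{\MT}_\ell$. Concretely, write $r$ for the $W_{K^c}$-part of $\rho^{\MT}_\ell$ and $x\coloneqq r(\Frob_q)\in\bG(\overline{\Q}_\ell)$. Then $x$ is semisimple in $\bG$ if and only if its image under the faithful representation is semisimple, that is, if and only if every $p_i(x)$ is semisimple. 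Thus both assertions reduce to showing that $x$ is semisimple in $\bG$.

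The plan is to reduce to degree two via the central isogeny $p_2\colon\bG\to\bG_2$ with kernel $\langle s\rangle$. Write the Jordan decomposition $x=x_sx_u$ in $\bG(\overline{\Q}_\ell)$. Since homomorphisms preserve Jordan decompositions, $p_2(x)=p_2(x_s)p_2(x_u)$ is the Jordan decomposition of $p_2(x)$. Suppose $p_2(x)$ is semisimple. Then $p_2(x_u)=1$, so $x_u\in\ker p_2=\langle s\rangle$, a finite group. A unipotent element of finite order in characteristic zero is trivial, so $x_u=1$ and $x$ is semisimple.

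It therefore remains to prove that $\rho_{\ell,2}$ (equivalently $p_2\circ\rho^{\MT}_\ell$) is Frobenius semisimple. This is the main step. The first approach would be to use \Cref{thm: MT conjecture} in degree two together with the Kuga--Satake construction: after a finite extension, $H^2$ (or its transcendental part, the algebraic part being spanned by cycle classes with finite-order Galois action) embeds Galois-equivariantly into $\End(H^1(A))$ for a Kuga--Satake abelian variety $A$. By Grothendieck's semistable reduction for abelian varieties, the Weil--Deligne representation of $H^1(A)$ is Frobenius semisimple: the toric/abelian graded pieces have semisimple Frobenius by Tate/Weil, and the Frobenius acts on $T_\ell$ of the Raynaud extension compatibly with the monodromy filtration splitting. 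Frobenius semisimplicity then passes to tensor constructions and subrepresentations, so it holds for $H^2$.

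An alternative route in degree two is the weight-monodromy in \Cref{lem:WMC in degree 2} combined with known Frobenius semisimplicity for $H^{\le 2}$, for example via the Picard variety and $H^1$ of curves after a Lefschetz argument. I expect the Kuga--Satake route to be the cleaner one when $b_2\ge 4$. The obstacle is the case $b_2\le 3$ and the case $\ell=p$, where semisimplicity of $D_{\pst}$-Frobenius for abelian varieties must also be invoked (it holds by the $p$-adic analogue via Raynaud uniformization). For small $b_2$ the Mumford--Tate group is a torus times $\SO$ of small rank, and one can treat it directly: $H^2$ then has CM-type transcendental part, hence potentially good reduction and semisimple Frobenius via the Weil conjectures on the reduction.
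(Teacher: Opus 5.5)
Your proposal follows the paper's proof: pass to $K^c$, obtain Frobenius semisimplicity of $\rho_{\ell,2}$ from a Kuga--Satake abelian variety when $b_2(X)\geq4$ (SGA~7 for $\ell\neq p$, Coleman--Iovita for $\ell=p$) and from $\bG_2$ being a torus when $b_2(X)=3$ (the only other case, since $b_2\geq3$ for hyper-K\"ahler varieties), and then kill $x_u$ using that $\ker(p_2)=\langle s\rangle$ is finite. One correction: when $b_2=3$ the reason is simply that $p_2(x)$ lies in the torus $\bG_2(\overline{\Q}_\ell)$ and is therefore semisimple, whereas your appeal to ``potentially good reduction'' and ``the Weil conjectures on the reduction'' does not work (no reduction of $X$ is available, and the Weil conjectures give purity, not semisimplicity); for the same reason, drop your ``alternative route'' via Frobenius semisimplicity of $H^{\le2}$, which is not available for general varieties.
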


\begin{proof}
Since Frobenius semisimplicity is invariant under finite extensions of $K$, we may assume that $K=K^c$.
We first show that $\rho_{\ell, 2}$ is Frobenius semisimple for every $\ell$.

If $b_2(X)\geq 4$, the Kuga--Satake construction \cite{Andre96b} (see also \cite[Lemma~4.6]{ArithmeticNagai} and \Cref{subsection:Strong compatibility of Mumford--Tate-valued Weil representations}) reduces the assertion to the corresponding result for abelian varieties, which follows from \cite[Expos\'e~IX]{SGA7-I} when $\ell\neq p$ and from \cite{Coleman-Iovita} when $\ell=p$.

If $b_2(X)=3$, then $H^{1,1}(X_\C)$ is spanned by a polarization whose orthogonal complement in $H^2_B(X_\C,\Q)$ has rank two.
The corresponding special orthogonal group is a torus, which in turn implies that $\bG_2$ is a torus.
This implies the assertion.

Now write $\rho^{\MT}_\ell=(r^{\MT}_\ell,N^{\MT}_\ell)$, and let $x$ be the value of $r^{\MT}_\ell$ at a geometric Frobenius lift.
Write $x=x_sx_u$ for its Jordan decomposition.
Since $p_2(x)$ is semisimple, we have $p_2(x_u)=1$.
The kernel of $p_2$ contains no nontrivial unipotent elements, so $x_u=1$.
Therefore $\rho^{\MT}_\ell$ is Frobenius semisimple.
This implies that $\rho_{\ell, i}$ is also Frobenius semisimple for every $i$.
\end{proof}

\subsection{Weight--monodromy conjecture for hyper-K\"ahler varieties}\label{Purity and the weight--monodromy conjecture}

We now prove the weight--monodromy conjecture for \hk varieties.
The degree-two case is already known by \Cref{lem:WMC in degree 2}.
To pass to higher degrees, we use the following construction.

\begin{lem}\label{lem:weighted tensor generation}
    Let $V_i=H^i_B(X_\C,\Q)$ for each integer $0\leq i\leq 2\dim X$.
    \begin{enumerate}
        \item The action of $\bG$ on $V^{\otimes 2}_i\coloneqq V_i\otimes V_i$ factors through $\bG_2$.
        \item As a $\bG_2$-representation, $V^{\otimes 2}_i$ is a direct summand of a finite direct sum of representations $ V_2^{\otimes m}\otimes  V_2^{\vee,\otimes n} $ for some $m,n\geq 0$ with $m-n=i$.
    \end{enumerate}
\end{lem}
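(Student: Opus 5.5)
For part (1), I will use the fact recalled above from LLV theory: $p_2\colon\bG\to\bG_2$ is a central isogeny whose kernel is $\langle s\rangle$. The parity operator $s$ acts on $V_i$ as $(-1)^i$, so it acts on $V_i^{\otimes2}$ as $(-1)^{2i}=1$. Hence $\ker p_2$ acts trivially on $V_i^{\otimes 2}$. Since $p_2$ is a surjection of algebraic groups with kernel $\langle s\rangle$, the $\bG$-action on $V_i^{\otimes2}$ factors through $\bG=\bG/\ker(p_2)\cong\bG_2$ (quotient in the sense of algebraic groups, in characteristic zero).

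For part (2), I will apply the standard Tannakian fact \cite[Theorem~4.14]{Milne17} to the faithful representation $V_2$ of $\bG_2$. It says that every finite-dimensional representation of $\bG_2$ is a subquotient of a finite direct sum of the tensor spaces $V_2^{\otimes m}\otimes V_2^{\vee,\otimes n}$. The group $\bG_2$ is reductive, being a Mumford--Tate group of a polarizable Hodge structure, and we work in characteristic zero, so its representations are semisimple. Therefore "subquotient" can be upgraded to "direct summand". Applied to $V_i^{\otimes2}$, this gives a direct-summand embedding into some finite sum $\bigoplus V_2^{\otimes m}\otimes V_2^{\vee,\otimes n}$, with no control yet on $m-n$.

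To get the condition $m-n=i$, I will use the weight cocharacter $\mu\colon\GG_m\to\bG$, which is central and acts on $V_j$ by $z^j$; its degree-two projection is $p_2\circ\mu$. The cocharacter $p_2\circ\mu$ acts on $V_2^{\otimes m}\otimes V_2^{\vee,\otimes n}$ by $z^{2(m-n)}$ and on $V_i^{\otimes2}$ by $z^{2i}$. Since $p_2\circ\mu$ is central in $\bG_2$, each summand is an isotypic block for it. The equivariant embedding must therefore map $V_i^{\otimes2}$ into the sum of those summands on which $p_2\circ\mu$ acts by $z^{2i}$, which are exactly the summands with $2(m-n)=2i$, i.e.\ $m-n=i$. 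The projection onto this partial sum is still $\bG_2$-equivariant, so $V_i^{\otimes2}$ remains a direct summand of it.

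The main subtle point is this weight-matching step. It requires that $p_2\circ\mu$ be nontrivial, and that the equality $z^{2(m-n)}=z^{2i}$ for all $z$ force $m-n=i$; both hold because $\mu$ acts on $V_2$ through the nontrivial character $z^2$. The rest of the argument is formal.
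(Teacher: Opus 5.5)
Your proof is correct and follows the paper's argument. Part (1) uses that $\ker p_2=\langle s\rangle$ acts trivially on $V_i^{\otimes 2}$. Part (2) combines Milne's tensor-generation theorem with reductivity of $\bG_2$ and then matches central weights. The only difference is cosmetic: the paper uses the scalar subgroup $\GG_m\subset\bG_2$ (weights $i$ versus $m-n$), whereas you use $p_2\circ\mu$ (weights $2i$ versus $2(m-n)$). There is also a typo in part (1): you mean $\bG/\ker(p_2)\cong\bG_2$, not $\bG=\bG/\ker(p_2)$.
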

\begin{proof}
(1) The kernel of $p_2$ is generated by $s$, which acts on $V_i$ as $(-1)^i$ and hence trivially on $V^{\otimes 2}_i$.
The assertion follows since $p_2$ is surjective.

(2) The group $\bG_2$ is reductive, and $V_2$ is faithful as a $\bG_2$-representation.
Thus $V^{\otimes 2}_i$ is a direct summand of a finite direct sum of $ V_2^{\otimes m}\otimes  V_2^{\vee,\otimes n} $ for some $m,n\geq 0$ by \cite[Theorems~4.14 and~22.42]{Milne17}.
The scalar subgroup $\GG_m\subset\bG_2$ acts on
$V_i^{\otimes2}$ as $z^i$ and on
$V_2^{\otimes m}\otimes V_2^{\vee,\otimes n}$
as $z^{m-n}$.
Thus we may assume that $m-n=i$, as required.
\end{proof}

The following lemma detects purity from a tensor square.

\begin{lem}\label{lem:tensor-square defector of purity}
    Let $\rho=(r,N)$ be a Weil--Deligne representation of $K$ over $\overline{\Q}_\ell$.
    If $\rho\otimes\rho$ is pure of weight $2i$ for some integer $i$, then $\rho$ is pure of weight $i$.
\end{lem}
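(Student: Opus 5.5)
We reduce to the structure theory of the pair $(r,N)$ on $V$ and compare monodromy filtrations. First, for a Weil--Deligne representation $(r,N)$ over $\overline{\Q}_\ell$, purity can be tested after Frobenius semisimplification, since the monodromy filtration and the eigenvalues of $r(\Frob_q)$ on its graded pieces are unchanged. Moreover $(\rho\otimes\rho)^{\mathrm{F\text{-}ss}}=\rho^{\mathrm{F\text{-}ss}}\otimes\rho^{\mathrm{F\text{-}ss}}$. So we may assume $\rho$ is Frobenius semisimple. Then the standard structure theorem decomposes $\rho\simeq\bigoplus_k \sigma_k\otimes \mathrm{Sp}(n_k)$, where each $\sigma_k$ is an irreducible Frobenius-semisimple representation of $W_K$ with $N=0$, and $\mathrm{Sp}(n)$ is the special representation of dimension $n$. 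Here $\mathrm{Sp}(n)$ has basis $e_0,\dots,e_{n-1}$, with $\Frob_q$ acting on $e_j$ by $q^{-j}$ (up to normalization) and $N e_j=e_{j+1}$. After twisting each $\sigma_k$ by an unramified character we may assume $\mathrm{Sp}(n)$ is centered: its $\gr^M_j$ pieces, for $j=-(n-1),-(n-3),\dots,n-1$, carry Frobenius eigenvalue $q^{j/2}$ times a fixed unit normalization.

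Second, we translate purity into a condition on absolute values of eigenvalues. Fix an isomorphism $\overline{\Q}_\ell\simeq\C$; more precisely, we use the condition for all such embeddings, since Weil numbers are defined via all complex embeddings. In these terms $\rho$ is pure of weight $i$ if and only if, for every $k$, every Frobenius eigenvalue of $\sigma_k$ (after the centering) is a Weil $q^{i}$-number up to the correct half-integral shift coming from $\mathrm{Sp}(n_k)$. Equivalently, every eigenvalue of $\Frob_q$ on the ``primitive'' (bottom) piece $\sigma_k\otimes e_0$ is a Weil $q^{i-(n_k-1)}$-number.

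Third, we analyse $\rho\otimes\rho$. Its summands include $(\sigma_k\otimes\sigma_k)\otimes(\mathrm{Sp}(n_k)\otimes\mathrm{Sp}(n_k))$. By the Clebsch--Gordan decomposition, $\mathrm{Sp}(n)\otimes\mathrm{Sp}(n)\simeq\bigoplus_{m}\mathrm{Sp}(2n-1-2m)$ for $m=0,\dots,n-1$, with compatible Frobenius twists, so this summand contains $\mathrm{Sp}(2n_k-1)$ with bottom vector $e_0\otimes e_0$. The monodromy filtration of a direct sum is the direct sum of the monodromy filtrations. Hence purity of weight $2i$ for $\rho\otimes\rho$ gives the following: every eigenvalue $\alpha\beta$, for $\alpha,\beta$ eigenvalues of $\Frob_q$ on $\sigma_k\otimes e_0$, is a Weil $q^{2i-2(n_k-1)}$-number. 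Taking $\beta=\alpha$ shows that $\alpha^2$ is a Weil $q^{2(i-(n_k-1))}$-number. Thus $\alpha$ is an algebraic number all of whose conjugates have absolute value $q^{(i-(n_k-1))/2}$, i.e.\ $\alpha$ is a Weil $q^{i-(n_k-1)}$-number. This is exactly purity of $\rho$ of weight $i$ by the second step.

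The main obstacle is the bookkeeping in the second and third steps: one must check that the monodromy filtration on the summand $\mathrm{Sp}(2n-1)\subset\mathrm{Sp}(n)^{\otimes2}$ places $e_0\otimes e_0$ in degree $-(2n-2)$, and that this degree matches the required weight shift. A cleaner alternative avoids Clebsch--Gordan. Use the $\fsl_2$-grading $\lambda$ from the Jacobson--Morozov theorem, as in the proof of \Cref{Lemma:weight cocharacter is associated with the monodromy operator}: the grading on $V\otimes V$ is the tensor product grading. Take $v$ a Frobenius eigenvector in a $\lambda$-weight space $V(\lambda,j)$ that is $r$-stable; such a choice is possible by Frobenius semisimplicity and because $r$ commutes with an $\fsl_2$-triple after conjugation. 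Then $v\otimes v$ lies in $(V\otimes V)(\lambda,2j)$ with eigenvalue $\alpha^2$, so $\alpha^2$ is a Weil $q^{2i+2j}$-number. Hence $\alpha$ is a Weil $q^{i+j}$-number. I would try this grading version first.
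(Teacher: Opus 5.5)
Your proof is correct, and its core idea is the paper's. An eigenvector of $r(\Frob_q)$ with eigenvalue $\alpha$ in monodromy degree $j$ gives the eigenvalue $\alpha^2$ in degree $2j$ of $V\otimes V$, and a number whose square is a Weil $q^{2w}$-number is a Weil $q^{w}$-number.

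Where you differ is in how you place $v\otimes v$. The paper never chooses a splitting. Since $\Ad(r(w))N=q^{\alpha(w)}N$ only rescales $N$, $r(W_K)$ preserves $M_\bullet V$. The convolution isomorphism $\gr^M_{2j}(V\otimes V)\simeq\bigoplus_{a+b=2j}\gr^M_aV\otimes\gr^M_bV$ of \cite[Proposition~(1.6.9)]{WeilII} is canonical, hence Frobenius-equivariant. So an eigenvalue $a$ on $\gr^M_jV$ directly gives the eigenvalue $a^2$ on the summand $\gr^M_jV\otimes\gr^M_jV$. No reduction to the Frobenius-semisimple case, no structure theorem and no Clebsch--Gordan is needed.

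Your grading version is this argument plus a splitting. However, your justification for an $r$-stable grading (``$r$ commutes with an $\fsl_2$-triple after conjugation'') is not right as stated: $r(\Frob_q)$ rescales $N$, so it cannot centralize a triple containing $N\neq0$. What you need is a grading splitting $M_\bullet$ that commutes with $r$. Your decomposition $\bigoplus_k\sigma_k\otimes\mathrm{Sp}(n_k)$ supplies one: put $\sigma_k\otimes e_j$ in degree $n_k-1-2j$.

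In the Clebsch--Gordan version there is a harmless sign slip. With $Ne_j=e_{j+1}$ and $\Frob_q$ acting on $e_j$ by $q^{-j}$, the vector $e_0$ has the largest Frobenius weight, and $N$ lowers the filtration degree by $2$. So $\sigma_k\otimes e_0$ is the \emph{top} piece $\gr^M_{n_k-1}$, not the bottom, and $e_0\otimes e_0$ lies in degree $2(n_k-1)$. The weights should therefore read $i+(n_k-1)$ and $2i+2(n_k-1)$; by symmetry the rest of your argument is unchanged.

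Your explicit route buys a concrete identification of the summand of $\rho\otimes\rho$ that carries $\alpha^2$. The paper's route is shorter and avoids both the semisimplification step and the choice of an equivariant splitting.
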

\begin{proof}
Let $V$ be the underlying vector space of $\rho$ and let $M_\bullet V$ be the monodromy filtration of $N$.
The monodromy operator on $V\otimes V$ is $N\otimes 1+1\otimes N$.
Its monodromy filtration is the convolution of the two filtrations, so
\[
 \gr_m^M(V\otimes V)\simeq\bigoplus_{a+b=m}\gr_a^M V\otimes\gr_b^M V.
\]
This follows, for instance, from the Jacobson--Morozov theorem; see \cite[Proposition~(1.6.9)]{WeilII}.

Let $a$ be an eigenvalue of geometric Frobenius on $\gr_j^M V$.
Then $a^2$ is an eigenvalue on the summand $\gr_j^M V\otimes\gr_j^M V$ of $\gr_{2j}^M(V\otimes V)$.
Purity of $V\otimes V$ says that $a^2$ is a Weil $q^{2(i+j)}$-number.
It follows that $a$ is a Weil $q^{i+j}$-number, as desired.
\end{proof}

We can now prove the weight--monodromy conjecture (\Cref{Conjecture:WMC}) for hyper-K\"ahler varieties.

\begin{thm}
\label{Theorem:purity of Weil-Deligne representations}
Let $X$ be a hyper-K\"ahler variety over $K$.
For every prime $\ell$ and every integer $0\leq i\leq 2\dim X$, the Weil--Deligne representation $\rho_{\ell, i}$ is pure of weight $i$.
\end{thm}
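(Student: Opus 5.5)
Purity is insensitive to finite extensions of $K$: restricting to $W_{K'}$ changes neither the monodromy filtration nor the eigenvalues of a Frobenius lift up to passing to a power, and Weil $q^{k}$-numbers become Weil $q'^{k}$-numbers under $q\mapsto q'=q^f$, $a\mapsto a^f$. So I will replace $K$ by $K^c$ and work with $\phi^{\MT}_\ell\colon G_{K}\to\bG(\Q_\ell)$ and its Weil--Deligne representation $\rho^{\MT}_\ell$. Then $\rho_{\ell,i}\simeq p_i\circ\rho^{\MT}_\ell$ by \Cref{rem:functorial in groups}. By \Cref{lem:tensor-square defector of purity}, it suffices to show that $\rho_{\ell,i}\otimes\rho_{\ell,i}$, the Weil--Deligne representation attached to the $\bG$-representation $V_i^{\otimes2}$ with $V_i=H^i_B(X_\C,\Q)$, is pure of weight $2i$.

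By \Cref{lem:weighted tensor generation}(1), the action on $V_i^{\otimes2}$ factors through $p_2\colon\bG\to\bG_2$. Hence its Weil--Deligne representation is obtained from the $\bG_2$-valued Weil--Deligne representation $p_2\circ\rho^{\MT}_\ell$, again by functoriality. By \Cref{lem:weighted tensor generation}(2), $V_i^{\otimes2}$ (base-changed to $\Q_\ell$) is a direct summand of a finite direct sum of $V_2^{\otimes m}\otimes V_2^{\vee,\otimes n}$ with $m-n=i$. It therefore suffices to check the following three facts:
\begin{enumerate}
\item the Weil--Deligne representation on $V_2^{\otimes m}\otimes V_2^{\vee,\otimes n}$ is pure of weight $2m-2n=2i$;
\item direct summands of pure representations are pure of the same weight;
\item finite direct sums of pure representations of the same weight are pure.
\end{enumerate}
Facts (2) and (3) are immediate. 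The monodromy filtration is compatible with direct sums, and a $W_K$- and $N$-stable direct summand inherits both the filtration and its graded pieces, because the monodromy filtration is characterized by $N$ alone.

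For (1), I use \Cref{lem:WMC in degree 2}: $\rho_{\ell,2}$ is pure of weight $2$. Its dual is then pure of weight $-2$, since the monodromy filtration of $-N^\vee$ is the dual filtration with indices negated and the Frobenius eigenvalues invert. Tensor products of pure representations of weights $a$ and $b$ are pure of weight $a+b$, by the convolution description $\gr^M_m(V\otimes W)\simeq\bigoplus_{a'+b'=m}\gr^M_{a'}V\otimes\gr^M_{b'}W$ from \cite[Proposition~(1.6.9)]{WeilII} used in the proof of \Cref{lem:tensor-square defector of purity}. The eigenvalues on $\gr_{a'}\otimes\gr_{b'}$ are then products of Weil $q^{a+a'}$- and $q^{b+b'}$-numbers. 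This gives weight $2m-2n=2i$.

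The main obstacle is making sure that the tensor operations and the direct-summand decomposition, which live at the level of $\bG_2$-representations over $\Q_\ell$, really transport to Weil--Deligne representations. This is exactly what functoriality of the Tannakian construction in \Cref{Example: WD rep ell adic case,Example: WD rep p adic case} provides, including for $\ell=p$ via $D_{\pst}$, which is an exact tensor functor. A second point needing care is the case of odd $i$, where the factorization through $\bG_2$ only holds on the tensor square. This is why the tensor-square lemma is needed rather than a direct argument on $V_i$.
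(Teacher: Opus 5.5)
Your proposal is correct and follows essentially the same route as the paper. You reduce to $K=K^c$, use \Cref{lem:weighted tensor generation} to exhibit $\rho_{\ell,i}^{\otimes2}$ as a direct summand of sums of $\rho_{\ell,2}^{\otimes m}\otimes\rho_{\ell,2}^{\vee,\otimes n}$ with $m-n=i$, invoke the degree-two case, and conclude with \Cref{lem:tensor-square defector of purity}. The only cosmetic difference is that you transport the splitting via functoriality of the $\bG_2$-valued Weil--Deligne construction, whereas the paper simply notes that the $\bG_2$-equivariant splitting is $G_K$-equivariant because the Galois action factors through $\phi^{\MT}_\ell$.
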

\begin{proof}
Since purity is invariant under finite extensions, we may assume that $K=K^c$.
Using \Cref{lem:weighted tensor generation} and the \'etale--Betti comparison, we see that $H^i_\et(X_{\overline{K}}, \Q_\ell)^{\otimes 2}$ is a direct summand of a finite direct sum of $ H^2_\et(X_{\overline{K}}, \Q_\ell)^{\otimes m}\otimes H^2_\et(X_{\overline{K}}, \Q_\ell)^{\vee,\otimes n} $ with $m-n=i$ as a $\bG_2$-representation over $\Q_\ell$.
Since the action of $G_K$ on total cohomology factors through $\phi^{\MT}_\ell\colon G_K\to\bG(\Q_\ell)$, it follows that this splitting is also $G_K$-equivariant.
Thus $\rho_{\ell,i}^{\otimes 2}$ is a direct summand of a finite direct sum of Weil--Deligne representations $ \rho_{\ell,2}^{\otimes m}\otimes \rho_{\ell,2}^{\vee,\otimes n} $ with $m-n=i$.
By \Cref{lem:WMC in degree 2}, $\rho_{\ell,2}$ is pure of weight $2$, so each $ \rho_{\ell,2}^{\otimes m}\otimes \rho_{\ell,2}^{\vee,\otimes n} $ is pure of weight $2(m-n)=2i$.
Hence $\rho_{\ell,i}^{\otimes 2}$ is pure of weight $2i$.
The assertion now follows from \Cref{lem:tensor-square defector of purity}.
\end{proof}

\section{Strongly compatible systems with rational and integral coefficients}
\label{Strongly compatible systems with integral coefficients}

In this section, we establish strong compatibility for the Weil--Deligne representations valued in Mumford--Tate groups attached to hyper-K\"ahler varieties over number fields and its integral refinement.

\subsection{Compatibility of Weil--Deligne representations}
\label{Subsection:definitions strong compatibility}

Let $F$ be a number field, and fix an embedding $\overline F\hookrightarrow\C$.
Let $X$ be an abelian variety or a \hk variety over $F$.
Put
\[
V_B=
\begin{cases}
H^1_B(X_\C,\Q), & \text{if $X$ is an abelian variety},\\
H^\bullet_B(X_\C,\Q), & \text{if $X$ is a \hk variety},
\end{cases}
\]
and, for every prime $\ell$, put
\[
V_\ell=
\begin{cases}
H^1_{\et}(X_{\overline F},\Q_\ell),
  & \text{if $X$ is an abelian variety},\\
H^\bullet_{\et}(X_{\overline F},\Q_\ell),
  & \text{if $X$ is a \hk variety}.
\end{cases}
\]
Let $\bG=\MT(X_\C)$ denote the Mumford--Tate group of $V_B$.
Write
\[
\mu\colon\GG_m\longrightarrow Z(\bG)
\]
for the weight cocharacter, normalized so that $\mu(z)$ acts as $z$ on $V_B$ if $X$ is an abelian variety and as $z^i$ on $H^i_B(X_\C,\Q)$ for each $i$ if $X$ is a \hk variety.
By replacing $F$ by a finite extension, we assume that the algebraic monodromy group of $V_\ell$ is connected for every prime $\ell$.
By Deligne's theorem on absolute Hodge cycles for abelian varieties \cite{Deligne} and by \Cref{thm: MT conjecture} for \hk varieties, the Galois representation on $V_\ell$ factors through
\[
\phi_\ell^{\MT} \colon G_F\longrightarrow \bG(\Q_\ell),
\]
where we identify $V_\ell$ with $V_B\otimes_\Q\Q_\ell$ via the \'etale--Betti comparison.

Let $v \mid p$ be a finite place of $F$.
Applying the construction of \Cref{Subsection:G-WD representation associated with Galois representation} to $\phi_\ell^{\MT}|_{G_{F_v}}$ gives a $\bG$-valued Weil--Deligne representation
\[
\rho_{\ell,v}^{\MT}
=(r_{\ell,v}^{\MT},N_{\ell,v})
\]
of $F_v$ over an algebraic closure $\overline{\Q}_\ell$ of $\Q_\ell$ for every prime $\ell$, including $\ell=p$.

Let $\overline{\Q}\subset\C$ be the algebraic closure of $\Q$ in $\C$, and fix an embedding $\iota_\ell \colon \overline{\Q}\hookrightarrow\overline{\Q}_\ell$ for every $\ell$.

\begin{defn}
\label{Definition:rational strong compatibility}
The family $\{\rho^{\MT}_{\ell,v}\}_\ell$ is \emph{rationally strongly compatible} if there exists a $\bG$-valued Weil--Deligne representation $\rho_v$ of $F_v$ over $\overline{\Q}$ satisfying the following conditions:
\begin{enumerate}
    \item ${}^{\sigma}\rho_v \sim_{\bG(\overline{\Q})}\rho_v$ for every $\sigma \in G_\Q$, where ${}^{\sigma}\rho_v$ denotes the base change of $\rho_v$ along $\sigma \colon \overline{\Q} \tisoto \overline{\Q}$.
    \item $\rho_v \otimes_{\overline{\Q}} \overline{\Q}_\ell \sim_{\bG(\overline{\Q}_\ell)} \rho^{\MT}_{\ell,v}$ for every $\ell$, including $\ell=p$.
\end{enumerate}
\end{defn}

\begin{rem}
\label{Remark:difference Kisin-Zhou}
Extension of scalars from $\overline{\Q}$ to $\C$ induces a bijection between the $G_\Q$-invariant equivalence classes of $\bG$-valued Weil--Deligne representations of $F_v$ over $\overline{\Q}$ and the $\Aut(\C/\Q)$-invariant equivalence classes of $\bG$-valued Weil--Deligne representations of $F_v$ over $\C$.
The latter are called ``defined over $\Q$'' in \cite[Definition~4.1.7]{KZb}.
Indeed, after fixing a finite quotient through which the inertia subgroup acts, $\bG$-valued Weil--Deligne representations are parametrized by an affine scheme of finite type over $\Q$.
An $\Aut(\C/\Q)$-invariant $\bG$-conjugacy orbit is locally closed and descends to $\overline{\Q}$, so it has a $\overline{\Q}$-point.
Moreover, two $\bG$-valued Weil--Deligne representations over $\overline{\Q}$ that are equivalent over $\C$ are already equivalent over $\overline{\Q}$ since their transporter is a scheme of finite type over $\overline{\Q}$, and hence has a $\overline{\Q}$-point whenever it has a $\C$-point.
This proves the desired bijection.

Thus \Cref{Definition:rational strong compatibility} is equivalent to the formulation of strong compatibility used in \cite{KZb}.
\end{rem}

\begin{thm}[{\cite[Theorem~1.2]{KZb}}]
\label{Theorem:strongly compatiblity for abelian variety}
In the above setting, suppose that $X$ is an abelian variety with semistable reduction at $v$.
Then $\{\rho^{\MT}_{\ell,v}\}_\ell$ is rationally strongly compatible.
\end{thm}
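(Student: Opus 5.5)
The statement is essentially \cite[Theorem~1.2]{KZb}. My plan is therefore not to reprove it but to check that our set-up and our definition of rational strong compatibility agree with theirs, so that their theorem applies verbatim. Concretely, I will compare three things in order: the group and the representations, the construction of the Weil--Deligne representations at all $\ell$ (including $\ell=p$), and the notion of ``strongly compatible''.

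First, the group and the representations. In our setting $X$ is an abelian variety over a number field $F$, and $F$ has been enlarged so that the algebraic monodromy groups are connected. By Deligne's theorem on absolute Hodge cycles \cite{Deligne}, $\phi^{\MT}_\ell$ then takes values in $\bG(\Q_\ell)$ with $\bG=\MT(X_\C)$. This is exactly the Mumford--Tate-valued Galois representation considered in \cite{KZb}, where a similar connectedness assumption (after finite extension) is imposed.

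Second, the local Weil--Deligne representations. I will check that the $\bG$-valued Weil--Deligne representations $\rho^{\MT}_{\ell,v}$ built in \Cref{Example: WD rep ell adic case,Example: WD rep p adic case} agree, up to $\bG(\overline{\Q}_\ell)$-equivalence, with those of \cite{KZb}. For $\ell\neq p$ this means Grothendieck's monodromy theorem with geometric Frobenius normalization. For $\ell=p$ it means $D_{\pst}$ together with the Fontaine-style linearized Weil action. The possible discrepancies are conventions: arithmetic versus geometric Frobenius, or $N$ versus a scalar multiple or its negative. Any such discrepancy changes every $\rho^{\MT}_{\ell,v}$ by the same automorphism of the data. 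This is harmless: a convention change applied uniformly to all $\ell$ and to the comparison object over $\overline{\Q}$ preserves both conditions of \Cref{Definition:rational strong compatibility}. Rescaling $N$ by a nonzero scalar can moreover be absorbed by conjugation, as in the proof of \Cref{Proposition: independence of integral WD construction}. I will also use \Cref{rem:functorial in groups} to pass freely between $\bG$ and $\GL(V_B)$ when comparing with their constructions.

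Third, the notion of strong compatibility. This is where the only genuine bookkeeping lies, and I expect it to be the main obstacle. \cite{KZb} phrase the conclusion as the existence of an $\Aut(\C/\Q)$-invariant equivalence class of $\bG$-valued Weil--Deligne representations over $\C$ (``defined over $\Q$'' in \cite[Definition~4.1.7]{KZb}). For each $\ell$, its image under an isomorphism $\C\simeq\overline{\Q}_\ell$ compatible with $\iota_\ell$ is equivalent to $\rho^{\MT}_{\ell,v}$. \Cref{Remark:difference Kisin-Zhou} converts such a class into a $G_\Q$-invariant class $\rho_v$ over $\overline{\Q}$. Condition (2) of \Cref{Definition:rational strong compatibility} then follows from the descent of the transporter scheme explained there, applied over $\overline{\Q}_\ell$ via $\iota_\ell$. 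The thing to verify is that the choice of isomorphism $\C\simeq\overline{\Q}_\ell$ does not matter; $\Aut(\C/\Q)$-invariance guarantees exactly this. With these identifications, semistable reduction at $v$ is precisely the hypothesis of \cite[Theorem~1.2]{KZb}, and the theorem follows.
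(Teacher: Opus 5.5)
Your proposal is correct and matches the paper's proof. The paper likewise just cites \cite[Theorem~1.2]{KZb} and translates its ``defined over $\Q$'' formulation into \Cref{Definition:rational strong compatibility} via \Cref{Remark:difference Kisin-Zhou}; your extra check of normalizations (Frobenius conventions, rescaling of $N$) is harmless bookkeeping that the paper leaves implicit.
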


\begin{proof}
This is \cite[Theorem~1.2]{KZb}, reformulated using \Cref{Remark:difference Kisin-Zhou}.
\end{proof}

\subsection{Strong compatibility for \hk varieties}\label{subsection:Strong compatibility of Mumford--Tate-valued Weil representations}

Now we assume that $X$ is a \hk variety over $F$ with $b_2(X) \geq 4$.
We shall deduce from \Cref{Theorem:strongly compatiblity for abelian variety} its analogue for $X$ after replacing $F$ by a finite extension.
For this, we use the Kuga--Satake construction \cite{Deligne72, Andre96b}.

The $\Q$-vector space $H^2_B(X_\C, \Q)(1)$ carries a natural non-degenerate $\Q$-quadratic form $q$, which is called the Beauville--Bogomolov--Fujiki (BBF) form.
We denote this quadratic space over $\Q$ by $V^2_{B}(1)$ and let $\Cl(V^2_{B}(1))$ be the associated Clifford algebra over $\Q$.
Let $\GSpin\coloneqq \GSpin(V^2_B(1))$ be the associated general spin group and let $\SO\coloneqq\SO(V^2_B(1))$.
We write
\[
\pi \colon \GSpin\longrightarrow \SO,
\quad
\nu \colon \GSpin\longrightarrow\GG_m
\]
for the natural projection and the spinor norm, respectively.

In this paper, we employ the following definition.

\begin{defn}\label{Definition: Kuga-Satake abelian variety}
Let $E \subset \overline{F}$ be a finite extension of $F$.
We say that $X$ \textit{admits a Kuga--Satake abelian variety over $E$ of dimension $2^{b_2(X)-1}$} if there exists an abelian variety $A$ over $E$ with an isomorphism of $\Q$-vector spaces
\[
\theta \colon H^1_B(A_\C, \Q) \isomto\Cl(V^2_{B}(1))
\]
satisfying the following conditions:
\begin{enumerate}
\item Let
\[
\GSpin
\hookrightarrow \GL(H^1_B(A_\C,\Q))
\]
be the injection obtained via $\theta$ from left multiplication on $\Cl(V^2_{B}(1))$.
The Hodge cocharacter $\mu_{A} \colon \GG_{m, \C} \to \GL(H^1_B(A_{\C}, \C))$ factors through $\GSpin_\C$, and the composition $\GG_{m, \C} \xrightarrow{\mu_{A}} \GSpin_\C \xrightarrow{\pi} \SO_\C$ is equal to the Hodge cocharacter associated with $V^2_B(1)$.

\item For every prime $\ell$, the Galois representation
\[
\phi^{\KS}_{\ell} \colon G_E \longrightarrow \GL(H^1_\et(A_{\overline{F}}, \Q_\ell))
\]
factors through $\GSpin(\Q_\ell)$ (via the \'etale--Betti comparison), and via $\pi \colon \GSpin_{\Q_\ell} \to \SO_{\Q_\ell}$, this induces the usual $G_E$-action on $V^2_\ell(1)\coloneqq H^2_\et(X_{\overline{F}}, \Q_\ell)(1)$.
Moreover, the composition $G_E 
\xrightarrow{\phi^{\KS}_{\ell}} \GSpin(\Q_\ell) \xrightarrow{\nu} \Q^\times_\ell$ is the inverse of the cyclotomic character $\chi_\ell$.
\end{enumerate}
We call such an abelian variety $A$, equipped with $\theta$, a \emph{Kuga--Satake abelian variety of $X$ over $E$}.
\end{defn}

As recalled in \cite[Section~4.2]{ArithmeticNagai}, the Kuga--Satake construction \cite{Deligne72,Andre96b} provides a Kuga--Satake abelian variety of $X$ over some finite extension $E/F$ in the sense above.
In fact, a result of Andr\'e \cite[Theorem~8.4.3]{Andre96b} allows us to control the extension $E/F$ as follows.

\begin{lem}\label{Lemma: existence of KS abelian variety}
Let $\scrL$ be an ample line bundle on $X$ and put
\[
\Lambda_{\widehat\Z}
\coloneqq H^2_\et(X_{\overline F},\widehat\Z(1)),
\quad
P_{\widehat\Z}
\coloneqq c_1(\scrL)^\perp\subset\Lambda_{\widehat\Z},
\]
where the orthogonal complement is taken with respect to the BBF form.
Write $P_{\A_f} \coloneqq P_{\widehat\Z}\otimes_{\widehat\Z}\A_f$ for finite adèles $\A_f$ of $\Q$.
For $m=3$ or $m=4$, let $\K_m^{\ad}$ be the image of
\[
\K_m\coloneqq 
\Set*{
g\in
\GSpin(P_{\A_f})\cap\Cl(P_{\widehat\Z})^\times
\given g-1\in m\Cl(P_{\widehat\Z})
}
\]
under the natural projection to $\SO(P_{\widehat\Z})$.
Let $E/F$ be a finite extension such that the image of $G_E \to \mathrm{O}(P_{\widehat\Z})$ is contained in $\K_m^{\ad}$.
Then $X$ admits a Kuga--Satake abelian variety $A$ over $E$ in the sense of \Cref{Definition: Kuga-Satake abelian variety}.
Moreover, $A$ can be chosen to have semistable reduction at every finite place of $E$ of residue characteristic prime to $m$.
\end{lem}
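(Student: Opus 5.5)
We follow the Kuga--Satake construction as in \cite[Section~4.2]{ArithmeticNagai} and \cite{Deligne72,Andre96b}, keeping track of the level structure. The construction is carried out on the primitive part $P$, i.e.\ with $\GSpin(P)$ rather than $\GSpin(V^2_B(1))$, since $c_1(\scrL)$ is Galois invariant and $V^2_B(1)=\Q c_1(\scrL)\oplus P_\Q$ orthogonally. The abelian variety with $H^1\simeq\Cl(P_\Q)$ then yields one with $H^1\simeq\Cl(V^2_B(1))$ by the standard identification $\Cl(V^2_B(1))\simeq\Cl(P_\Q)^{\oplus 2}$ as left $\GSpin(P)$-modules. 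Since $\GSpin(P)\subset\GSpin(V^2_B(1))$, conditions (1) and (2) of \Cref{Definition: Kuga-Satake abelian variety} for $V^2_B(1)$ follow from those for $P$.

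First we build the complex Kuga--Satake abelian variety and descend it. Deligne's construction gives a Shimura datum $(\GSpin(P),\Omega^\pm)$ and a point $h$ determined by $X_\C$. The level $\K_m$ is neat for $m\ge3$, and the Kuga--Satake family over $\mathrm{Sh}_{\K_m}(\GSpin(P))$ is defined over $\Q$. Andr\'e's theorem \cite[Theorem~8.4.3]{Andre96b} (through the period map of the moduli of polarized hyper-K\"ahler varieties with level structure) shows the following. The point of $\mathrm{Sh}_{\K_m^{\ad}}(\SO(P))$ attached to $(X,\scrL)$ with $\K_m^{\ad}$-level structure is defined over any field $E$ over which the Galois image in $\mathrm{O}(P_{\widehat\Z})$ lies in $\K_m^{\ad}$. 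Moreover, the Galois action on $P_{\widehat\Z}$ agrees with the one on the fiber of the orthogonal local system.

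The main obstacle is lifting this $E$-rational point from the $\SO$ Shimura variety to the $\GSpin$ one, with the Galois action on $H^1$ landing in $\GSpin$ and having spinor norm $\chi_\ell^{-1}$. I would use that $\mathrm{Sh}_{\K_m}(\GSpin)\to\mathrm{Sh}_{\K_m^{\ad}}(\SO)$ is finite étale on components. Its geometric fibres over a point are a torsor under the finite group $Z(\A_f)\cap\ldots$ modulo $\K_m$, i.e.\ a quotient of $\Q^\times\backslash\A_f^\times/(\K_m\cap\A_f^\times)$. Neatness of $\K_m$ (the congruence $g\equiv1\bmod m$) makes the relevant obstruction class trivial, exactly as in \cite[Section~4]{ArithmeticNagai}. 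So the fibre over the $E$-point has an $E$-rational point, possibly after twisting the abelian variety by a quadratic character. The twist is harmless because the spinor-norm condition pins it down: $\nu\circ\phi^{\KS}_\ell=\chi_\ell^{-1}$ is forced by the Hodge cocharacter in condition (1) together with Deligne's absolute Hodge cycles for $A$. The pullback $A$ of the universal abelian scheme then has $\GSpin$-valued Galois action whose image under $\pi$ is the action on $V^2_\ell(1)$, as required by condition (2).

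For semistability, the $\K_m$-level structure means $G_E$ acts on $H^1_\et(A_{\overline F},\Z_\ell)$ through elements $\equiv1\bmod m$. For a place $w$ of residue characteristic prime to $m$, choose $\ell\mid m$ ($\ell=3$ or $2$ with $m=4$). Then $w\nmid\ell$, and inertia acts unipotently on $H^1(A,\Z_\ell)$ by Raynaud's criterion (SGA7, Exp.~IX, Prop.~4.7), since it acts trivially mod $3$, resp.\ mod $4$. Hence $A$ has semistable reduction at $w$.
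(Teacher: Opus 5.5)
\textbf{The structure matches the paper; the lifting step has a gap.} Your architecture is essentially the paper's. You work with $\GSpin(P)$, enlarge $H^1$ from the Clifford algebra of $P_\Q$ to $\Cl(V^2_B(1))$, and get semistability from Raynaud's criterion via triviality of the Galois action mod $m$. Using $\Cl(P_\Q)^{\oplus 2}$ instead of the paper's $A=B^4$ with $H^1(B)\simeq\Cl^+(P_\Q)$ is inessential. The difference is that the paper obtains the descent to $E$, together with $E$-rationality of all $m$-torsion, directly from Andr\'e's Theorem~8.4.3. You use Andr\'e only for the $\SO(P)$-point and handle the lift to $\GSpin(P)$ yourself, and that step is not justified.

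\textbf{What fails.} ``Neatness of $\K_m$ makes the obstruction class trivial'' is not an argument. Your fallback, ``possibly after twisting by a quadratic character'', is incompatible with your final paragraph. After a twist by $\epsilon$, $G_E$ acts on $H^1_\et(A_{\overline F},\widehat\Z)$ through $\pm\K_m$ rather than $\K_m$. Since $-1\not\equiv 1 \bmod m$ for $m=3,4$, the $m$-torsion is then no longer $E$-rational, and Raynaud's criterion does not apply. Indeed, a twist ramified at $w$ makes inertia act through $-1$ times a unipotent element, which destroys semistability; compare \Cref{ex:nontrivial central inertia character}. Your claim that the spinor norm pins down the twist is also wrong. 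Since $\nu(-1)=1$, the character $\nu\circ\phi^{\KS}_\ell$ is insensitive to quadratic twists. That is why a twist is harmless for conditions (1) and (2), but it gives no control over semistability.

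\textbf{The fix is already in your fibre computation.} One has
\[
\K_m\cap\A_f^\times=\{z\in\widehat\Z^\times : z\equiv 1 \bmod m\},
\]
and $(\Z/m\Z)^\times=\{\pm1\}$ for $m=3,4$. Hence $\A_f^\times=\Q^\times\cdot(\K_m\cap\A_f^\times)$, so the group $\Q^\times\backslash\A_f^\times/(\K_m\cap\A_f^\times)$ is trivial. Therefore $\mathrm{Sh}_{\K_m}(\GSpin(P))\to\mathrm{Sh}_{\K_m^{\ad}}(\SO(P))$ is injective on geometric points, and it is \'etale since both levels are neat. Its fibre over your $E$-point is nonempty, because the complex Kuga--Satake point lies over it. It is therefore a finite \'etale $E$-scheme of degree one, i.e.\ an $E$-point.

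Consequently the lift exists with no twist. The abelian variety $A$ obtained from it has $G_E$ acting on $H^1_\et(A_{\overline F},\widehat\Z)\simeq\Cl(P_{\widehat\Z})$ through $\K_m$, and your Raynaud argument then goes through. This is exactly why $m\in\{3,4\}$ is imposed. Alternatively, you can invoke Andr\'e's theorem for the whole package, as the paper does.
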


\begin{proof}
By \cite[Theorem~8.4.3]{Andre96b}, the assumption ensures that the Kuga--Satake construction associated with the even Clifford algebra of the primitive cohomology can be carried out over $E$, yielding a Kuga--Satake package in the sense of \cite[Definition~4.5.1]{Andre96b} such that all $m$-torsion points of the underlying abelian variety $B/E$ are rational over $E$.
(Strictly speaking, we use a different quadratic form from the one in \cite{Andre96b}, but the same argument applies.)
We set $A\coloneqq B^4$.
Using \cite[Variant~4.1.3 and Corollary~6.4.4]{Andre96b}, together with \cite[Lemma~4.6]{ArithmeticNagai}, one can check that there is an isomorphism $\theta \colon H^1_B(A_\C, \Q) \tisoto \Cl(V^2_{B}(1))$ such that $A$, equipped with $\theta$, is a Kuga--Satake abelian variety over $E$ in the sense of \Cref{Definition: Kuga-Satake abelian variety}.

Since all $m$-torsion points of $A$ are $E$-rational, it follows from Raynaud's criterion \cite[Expos\'e~IX, Proposition~4.7]{SGA7-I} that $A$ has semistable reduction at every finite place of $E$ of residue characteristic prime to $m$.
\end{proof}

Assume now that $X$ admits a Kuga--Satake abelian variety $A$ over $E$ of dimension $2^{b_2(X)-1}$.
To relate the higher cohomology of $X$ to the cohomology of $A$, we use the twisted Looijenga--Lunts--Verbitsky (LLV) representation
\[
R \colon \GSpin
\longrightarrow \prod_i \GL(H^i_B(X_\C,\Q)).
\]
Here the restriction of $R$ to $\Spin(V^2_B(1))$ integrates the action of the degree-zero semisimple part of the LLV Lie algebra, and $z\in\GG_m\subset\GSpin$ acts in degree $i$ as $z^i$.
We refer to \cite[Section~3.2]{ArithmeticNagai} and the references therein for the construction.
It follows from a theorem of Verbitsky \cite[Theorem~1.4]{Verbitskyaction} that $\bG=\MT(X_\C)$ is contained in the image $R(\GSpin)$; see also \cite[Theorem~3.21 and Remark~3.22]{ArithmeticNagai}.

\begin{lem}\label{Lemma: LLV and Kuga-Satake hodge structure}
The composition $\GG_{m,\C}\xrightarrow{\mu_A}\GSpin_\C\xrightarrow{R}\prod_i\GL(H^i_B(X_\C,\C))$ is the Hodge cocharacter $\mu_X$ of $H^\bullet_B(X_\C,\Q)$.
In particular, we have the following commutative diagram:
\[
\begin{tikzcd}
\MT(A_{\C}) \arrow[r] \arrow[d, hook]
&
\bG \arrow[d, hook]
\\
\GSpin \arrow[r, "R"]
&
R(\GSpin) \arrow[r, hook]
&
\displaystyle \prod_i \GL(H^i_B(X_{\C},\Q)).
\end{tikzcd}
\]
The upper horizontal map is the restriction of $R$ and is an isogeny.
\end{lem}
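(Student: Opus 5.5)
The plan is to compare the two cocharacters $R\circ\mu_A$ and $\mu_X$ through their images under the degree-two projection $p_2$, and then lift the equality back using the fact that $p_2$ is a central isogeny with kernel generated by the parity operator. First I will check how $R$ and $\pi$ interact. The restriction of $R$ to $\Spin(V^2_B(1))$ integrates the degree-zero part of the LLV algebra, which acts on $H^2$ through the standard representation of $\mathfrak{so}(V^2_B(1))$ (up to a twist). The scalar subgroup $\GG_m\subset\GSpin$ acts on $H^2$ by $z^2$. From these two facts I will deduce
\[
p_2\circ R = \pi\cdot z^{2}
\]
in the appropriate sense: $p_2(R(g))$ is $\pi(g)$ on $V^2_B(1)$, twisted by a character. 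This character is a power of $\nu$ and can be pinned down by evaluating on the central $\GG_m$, where $\nu(z)=z^2$.

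Next I will combine this with condition (1) of \Cref{Definition: Kuga-Satake abelian variety}. That condition says $\pi\circ\mu_A$ is the Hodge cocharacter of $V^2_B(1)$, and I will also use the weight of $\mu_A$, which is read off from $\nu\circ\mu_A$. Together these give that $p_2\circ R\circ\mu_A$ is the Hodge cocharacter of $H^2_B(X_\C)$, i.e.\ equals $p_2\circ\mu_X$. The precise sign and Tate-twist conventions here need care, and I expect this bookkeeping to be the fiddliest part.

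To lift the equality, note that both $R\circ\mu_A$ and $\mu_X$ are cocharacters, and both land in $R(\GSpin)_\C$, which contains $\bG_\C$. They agree after composing with $p_2$. On $R(\GSpin)$ the kernel of $p_2$ is finite: it is contained in $\{\pm1\}$ acting via the parity operator, since the LLV Spin action is faithful up to that. Two cocharacters into a torus-containing group that differ by a map into a finite central subgroup must be equal, because $\GG_m$ is connected. So $R\circ\mu_A=\mu_X$.

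For the diagram, I will argue as follows. $\MT(A_\C)$ is the smallest $\Q$-subgroup of $\GSpin$ whose base change to $\C$ contains $\mu_A$. Its image under $R$ is a $\Q$-subgroup containing $\mu_X$, so $R(\MT(A_\C))\supseteq\bG$. Conversely, $R^{-1}(\bG)\cap\GSpin$ is a $\Q$-group containing $\mu_A$, so it contains $\MT(A_\C)$. Hence $R(\MT(A_\C))=\bG$, which gives the commutative diagram.

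Finally, the map $\MT(A_\C)\to\bG$ is an isogeny because $\ker R$ is finite. The reason is that $\ker R\subset\ker(p_2\circ R)$, and the latter is finite modulo scalars acting trivially, while scalars act by $z^i$ and hence faithfully on $H^2$. So the kernel is finite, and the map is surjective by the previous step.
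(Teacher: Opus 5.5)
Your proposal is correct and follows essentially the paper's route: derive $p_2\circ R=\nu\cdot\pi$, compare the two cocharacters in degree two using condition (1) of the Kuga--Satake definition, lift along the finite central kernel of $p_2$ on $R(\GSpin)$, and conclude by minimality of Mumford--Tate groups together with finiteness of $\ker(\nu\cdot\pi)$. One correction: deduce finiteness of $\ker(p_2|_{R(\GSpin)})=R(\ker(\nu\cdot\pi))$ directly from your formula $p_2\circ R=\nu\cdot\pi$, not from the unjustified claim that this kernel lies in $\{1,s\}$ (when $b_2$ is even, $\ker(\nu\cdot\pi)$ has order four and its image need not lie in $\{1,s\}$); also, scalars act on $H^2$ via $z\mapsto z^2$, which has finite kernel but is not faithful.
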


We denote the restriction $\MT(A_\C) \to \bG$ by the same notation $R$.

\begin{proof}
Both $\mu_X$ and $R\circ\mu_A$ take values in $R(\GSpin)_\C$.
Since the projection
\[
p_2 \colon \prod_i \GL(H^i_B(X_{\C}, \Q)) \longrightarrow \GL(H^2_B(X_{\C}, \Q))
\]
    induces a central isogeny from $R(\GSpin)$ onto its image, it is enough to show that their degree-two projections are the same.
    Since $p_2\circ R=\nu\cdot\pi$, the required equality follows immediately.
    By the defining property of Mumford--Tate groups, the equality $R\circ\mu_A=\mu_X$ implies $R(\MT(A_\C))=\bG$.
    Since $\nu\cdot\pi$ has finite kernel, so does $R$.
    Thus the restriction $R\colon\MT(A_\C)\to\bG$ is an isogeny.
\end{proof}

Recall that we assume that the algebraic monodromy group of the action of $G_F$ on $H^\bullet_\et(X_{\overline F},\Q_\ell)$ is connected for every prime $\ell$.

\begin{prop}\label{Proposition:LLV and Kuga-Satake}
Keep the notation and assumptions above.
Let $s \in Z(\bG)(\Q)$ be the element which acts as $(-1)^i$ on $H^i_B(X_\C,\Q)$.
\begin{enumerate}
    \item For every $\ell$, the homomorphism $\phi^{\KS}_\ell \colon G_E \to \GSpin(\Q_\ell)$ factors through $\MT(A_\C)(\Q_\ell)$.
    \item There is a unique continuous character $\epsilon \colon G_E \to \langle s \rangle =\{ 1, s \}$ such that the following diagram commutes for every $\ell$:
\[
\begin{tikzcd}
G_E
\arrow[r, "\phi^{\KS}_{\ell}"]
\arrow[dr, "\epsilon\phi^{\MT}_{\ell}"']
&
\MT(A_{\C})(\Q_\ell)
\arrow[d, "R"]
\\
&
\bG(\Q_\ell).
\end{tikzcd}
\]
\end{enumerate}
\end{prop}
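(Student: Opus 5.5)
For part (1), I would use the Mumford--Tate conjecture for the Kuga--Satake abelian variety $A$. Since $\MT(A_\C)\subset\GSpin$ and $\phi^{\KS}_\ell$ lands in $\GSpin(\Q_\ell)$, it suffices to show that the Zariski closure of $\phi^{\KS}_\ell(G_E)$ lies in $\MT(A_\C)_{\Q_\ell}$. By Deligne's theorem on absolute Hodge cycles, a finite-index subgroup of $G_E$ maps into $\MT(A_\C)(\Q_\ell)$. To handle all of $G_E$, I would compare with $X$, which has connected monodromy.

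Compose $\phi^{\KS}_\ell$ with $\nu\cdot\pi\colon\GSpin\to\GL(V^2_\ell)$, where $V^2_\ell=H^2_{\et}$. By Definition~\ref{Definition: Kuga-Satake abelian variety}(2), this composition is the natural action on $V^2_\ell(1)$ twisted by $\chi_\ell^{-1}$, which is the action on $H^2_\et(X_{\overline F},\Q_\ell)$. That action lands in $\bG_2(\Q_\ell)=p_2(\bG)(\Q_\ell)$ by \Cref{thm: MT conjecture} and the connectedness assumption. By \Cref{Lemma: LLV and Kuga-Satake hodge structure} we have $p_2\circ R(\MT(A_\C))=\bG_2$. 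Hence
\[
\phi^{\KS}_\ell(G_E)\subset (\nu\cdot\pi)^{-1}(\bG_2)=\MT(A_\C)\cdot\ker(\nu\cdot\pi).
\]
The kernel $\ker(\nu\cdot\pi)$ is finite. It consists of the central elements $z\in\GG_m$ with $z^2=1$, since $\nu(z)=z^2$ and $\pi(z)=1$. So it is $\{\pm1\}$, and $-1\in\GG_m\subset\MT(A_\C)$ because $\MT(A_\C)$ contains the scalars (the weight cocharacter of $A$). Therefore $(\nu\cdot\pi)^{-1}(\bG_2)=\MT(A_\C)$, which proves (1). This step of showing that the finite kernel lies in $\MT(A_\C)$ is the only delicate point. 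I would double-check it using $p_2\circ R=\nu\cdot\pi$ from the proof of \Cref{Lemma: LLV and Kuga-Satake hodge structure}.

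For part (2), consider both $R\circ\phi^{\KS}_\ell$ and $\phi^{\MT}_\ell|_{G_E}$, each with values in $\bG(\Q_\ell)$. After $p_2$, they give the same representation on $H^2_\et$, since $p_2\circ R=\nu\cdot\pi$ and Definition~\ref{Definition: Kuga-Satake abelian variety}(2) applies. The kernel of $p_2|_{\bG}$ is $\langle s\rangle$ and is central, so
\[
\epsilon_\ell(g)\coloneqq R(\phi^{\KS}_\ell(g))\,\phi^{\MT}_\ell(g)^{-1}\in\langle s\rangle .
\]
Because $s$ is central, $\epsilon_\ell$ is a homomorphism, and it is continuous because both representations are. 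It is unique since the diagram in (2) determines $\epsilon_\ell$.

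It remains to show that $\epsilon_\ell$ is independent of $\ell$, which I expect to be the main obstacle. The plan is to evaluate on odd-degree cohomology. On $H^i$ with $i$ odd, $\epsilon_\ell(g)=s$ exactly when $p_i(R(\phi^{\KS}_\ell(g)))=-p_i(\phi^{\MT}_\ell(g))$. Both sides are defined by the Galois actions on $H^i_\et(X)$ and on $H^1_\et(A)$, transported through the algebraic map $R$ defined over $\Q$. For Frobenius elements at places of good reduction for both $X$ and $A$, I would compare traces: $\mathrm{tr}\,(p_iR)(\phi^{\KS}_\ell(\Frob_v))$ is a polynomial over $\Q$ in the Frobenius data of $A$, because $R$ is a representation of $\GSpin\subset\GL(H^1(A))$ and its characteristic polynomial is independent of $\ell$. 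The trace of $\Frob_v$ on $H^i(X)$ is also independent of $\ell$ by Deligne. So $\epsilon_\ell(\Frob_v)$ is independent of $\ell$ whenever this trace is nonzero.

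To avoid the trace-zero issue, I would use a nontrivial $\bG$-representation with nonzero trace, or simply characteristic polynomials. One has $\epsilon(\Frob_v)=s$ exactly when the characteristic polynomial of the $H^i$ action is negated in the variable, that is, $P(-T)$ versus $P(T)$. This can coincide only if the polynomial is even. If that happens, one uses an auxiliary tensor construction $H^1\otimes H^i$-type, or a Chebotarev argument on a density-one set of Frobenius elements where the traces are nonzero. By Chebotarev density, the characters $\epsilon_\ell$, which factor through a quadratic extension, then agree for all $\ell$.
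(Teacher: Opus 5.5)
\textbf{Part (1).} Your argument breaks at the kernel computation. An element $g\in\GSpin$ lies in $\ker(\nu\cdot\pi)$ exactly when $\pi(g)=\nu(g)^{-1}$ is a \emph{scalar} in $\SO(V^2_B(1))$, but you only considered the case $\pi(g)=1$. When $b_2(X)$ is even, $-1\in\SO(V^2_B(1))$. Since $\nu(zg)=z^2\nu(g)$ on the fibre $\pi^{-1}(-1)$, there is $g_0$ with $\pi(g_0)=-1$ and $\nu(g_0)=-1$, so $\ker(\nu\cdot\pi)$ has order $4$ over $\overline{\Q}$. This $g_0$ is not in $\MT(A_\C)$: the group $\MT(A_\C)$ fixes the class $\eta$ of an ample line bundle in $V^2_B(1)$, whereas $\pi(g_0)\eta=-\eta$. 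Hence $(\nu\cdot\pi)^{-1}(\bG_2)$ strictly contains $\MT(A_\C)$, and your conclusion does not follow. The missing input is that $\pi\circ\phi^{\KS}_\ell$ is the Galois action on $V^2_\ell(1)$, which fixes $c_1(\scrL)$. You should therefore work inside the stabilizer $H\subset\GSpin$ of $\eta$, where $\ker(\nu\cdot\pi)|_H=\{\pm1\}\subset\MT(A_\C)$. This is exactly how the paper argues.

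\textbf{Part (2).} Your definition of $\epsilon_\ell$, and the proofs that it is a homomorphism and unique, are fine. The $\ell$-independence step, however, rests on an unjustified claim: that $\mathrm{tr}\,(p_iR)(\phi^{\KS}_\ell(\Frob_v))$ is determined by the characteristic polynomial of $\Frob_v$ on $H^1(A)$. That polynomial only determines the $\GL(\Cl(V^2_B(1)))$-conjugacy class of the semisimple part. By contrast, $\mathrm{tr}\,(p_iR)$ is a class function on $\GSpin$, and such a function need not be the restriction of a class function on $\GL(\Cl(V^2_B(1)))$. For instance, when $b_2$ is even, conjugation by an anisotropic vector $v\in V^2_B(1)$ inside $\Cl(V^2_B(1))^\times$ preserves $\GSpin$ and every restricted $\GL$-class function, yet it interchanges the two half-spin characters. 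To run your comparison you would need $\ell$-independence of Frobenius conjugacy classes inside $\MT(A_\C)$ (Kisin--Zhou). That is far deeper than Weil's theorem and comes with its own hypotheses. Your nonzero-trace and Chebotarev refinements do not touch this issue.

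\textbf{How the paper does it.} The paper avoids Frobenius elements entirely. Let $\tau_\ell=(\phi^{\KS}_\ell,\phi^{\MT}_\ell)$ and let $J_\ell$ be the Zariski closure of its image. The degree-two comparison gives $J_\ell\subset\Gamma_R\cup\Gamma_R\cdot(1,s)$, hence $J_\ell^\circ\subset(\Gamma_R)_{\Q_\ell}$. The Mumford--Tate conjecture for $X$ makes $J_\ell\to\bG_{\Q_\ell}$ surjective, which forces equality by dimension. Therefore $\ker\epsilon_\ell=\tau_\ell^{-1}(J_\ell^\circ(\Q_\ell))$. This subgroup is independent of $\ell$ by Serre's theorem applied to the compatible system $H^1(A)\oplus H^\bullet(X)$.
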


\begin{proof}
    (1) We choose an ample line bundle on $X$, and write $\eta$ for its class in $V^2_B(1)$.
    Let $H \subset \GSpin$ be the stabilizer of $\eta$ with respect to its usual action on $V^2_B(1)$ via $\pi$.
    By the properties of the Kuga--Satake abelian variety, we have $\MT(A_\C)\subset H$ and $\phi^{\KS}_\ell(G_E)\subset H(\Q_\ell).$ We define $f \colon H\to\GL(H^2_B(X_\C,\Q))$ by $f=(p_2\circ R)|_H =(\nu\cdot\pi)|_H$.
    Then the kernel of $f$ is $\{ \pm 1 \} \subset H$.
    Let $\MT_2(X_\C) \subset \GL(H^2_B(X_\C, \Q))$ be the Mumford--Tate group of degree $2$.
    Since $\MT(A_{\C})$ contains $\{ \pm 1 \}$ and $f(\MT(A_{\C}))=\MT_2(X_\C)$, it follows that $f^{-1}(\MT_2(X_\C)) = \MT(A_{\C})$.
    By our assumption, the Galois representation $\phi^{\KS}_\ell$ takes values in $f^{-1}(\MT_2(X_\C))$, which proves (1).

(2) For each $\ell$, we define $ \epsilon_\ell \colon G_E \to \langle s \rangle $ by $\epsilon_\ell(\sigma)=\phi^{\MT}_\ell(\sigma) \cdot (R\circ \phi^{\KS}_\ell)(\sigma)^{-1}$.
We have to show that $\epsilon_\ell$ is independent of $\ell$.
For this, it suffices to show that the subgroup $\ker(\epsilon_\ell) \subset G_E$ is independent of $\ell$.

Let $J_\ell \subset \MT(A_\C)_{\Q_\ell} \times\bG_{\Q_\ell}$ be the Zariski closure of the image of
\[
  \tau_\ell\coloneqq 
  (\phi_\ell^{\KS}, \phi_\ell^{\MT})
  \colon G_E \longrightarrow
  \MT(A_\C)(\Q_\ell)\times\bG(\Q_\ell).
\]
We claim that $\tau^{-1}_\ell(J^\circ_\ell(\Q_\ell)) = \ker(\epsilon_\ell)$.
This implies that $\ker(\epsilon_\ell)$ is independent of $\ell$ by a theorem of Serre \cite{RibetLetter} (see also \cite[Proposition~(6.14)]{LarsenPink92}).
To show the claim, let $\Gamma_R\subset\MT(A_\C)\times\bG$ be the graph of $R$.
Since $\bG \to \MT_2(X_\C)$ is an isogeny, the degree-two comparison shows that $ J_\ell^\circ\subset(\Gamma_R)_{\Q_\ell}; $ see also the proof of \cite[Proposition~4.7]{ArithmeticNagai}.
Since $R\colon\MT(A_\C)\to\bG$ is an isogeny, \Cref{thm: MT conjecture} implies that $J_\ell^\circ$ and $(\Gamma_R)_{\Q_\ell}$ have the same dimension.
Hence $J_\ell^\circ=(\Gamma_R)_{\Q_\ell}$.
Consequently, we have $\tau^{-1}_\ell(J^\circ_\ell(\Q_\ell)) = \ker(\epsilon_\ell)$.
This completes the proof of (2).
\end{proof}

\begin{thm}\label{Theorem: strong compatibility of MT representation}
Let $X$ be a \hk variety over a number field $F$ with $b_2(X)\geq4$.
Assume that the algebraic monodromy group of $H^\bullet_\et(X_{\overline F},\Q_\ell)$ is connected for every $\ell$, and that $X$ admits a Kuga--Satake abelian variety $A$ over $F$ with semistable reduction at a finite place $v\mid p$ of $F$.
Let $\bG\coloneqq \MT(X_\C)$ and consider the associated $\bG$-valued Weil--Deligne representation $\rho^{\MT}_{\ell,v}$ of $F_v$ over $\overline{\Q}_\ell$.
Then $\{\rho^{\MT}_{\ell,v}\}_\ell$ is rationally strongly compatible in the sense of \Cref{Definition:rational strong compatibility}.
\end{thm}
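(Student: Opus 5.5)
Our plan is to transport the strong compatibility of \Cref{Theorem:strongly compatiblity for abelian variety} for the Kuga--Satake abelian variety $A$ along the isogeny $R\colon\MT(A_\C)\to\bG$ of \Cref{Lemma: LLV and Kuga-Satake hodge structure}, and to correct by the sign character $\epsilon$ of \Cref{Proposition:LLV and Kuga-Satake}. Here $E=F$. First we ensure that the algebraic monodromy group of $H^1_\et(A_{\overline F},\Q_\ell)$ is connected. Its identity component is controlled by $\phi^{\KS}_\ell$, which lands in $\MT(A_\C)(\Q_\ell)$ by \Cref{Proposition:LLV and Kuga-Satake}(1). Connectedness of the monodromy of $X$, combined with the fact that $R$ is an isogeny, shows that the monodromy group of $A$ is contained in $R^{-1}(\bG)$, with identity component $\MT(A_\C)_{\Q_\ell}$; this follows from the graph argument in the proof of \Cref{Proposition:LLV and Kuga-Satake}(2). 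If connectedness fails, we note that Kisin--Zhou's theorem only needs $\phi^{\KS}_\ell$ to factor through $\MT(A_\C)(\Q_\ell)$, and this is all we use. Thus \Cref{Theorem:strongly compatiblity for abelian variety} applies to $A$ at $v$, since $A$ is semistable there. It gives an $\MT(A_\C)$-valued Weil--Deligne representation $\rho^A_v$ over $\overline\Q$, with $G_\Q$-invariant conjugacy class, such that $\rho^A_v\otimes\overline\Q_\ell\sim\rho(\phi^{\KS}_\ell|_{G_{F_v}})$ for every $\ell$.

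Next we push forward along $R$. By \Cref{rem:functorial in groups}, $R\circ\rho(\phi^{\KS}_\ell|_{G_{F_v}})\sim\rho(R\circ\phi^{\KS}_\ell|_{G_{F_v}})=\rho(\epsilon\phi^{\MT}_\ell|_{G_{F_v}})$. The character $\epsilon$ is independent of $\ell$ and takes values in the central subgroup $\langle s\rangle\subset Z(\bG)(\Q)$. We therefore define $\rho_v\coloneqq(\epsilon|_{W_{F_v}}\cdot(R\circ r^A_v),\,dR(N^A_v))$. This is a $\bG$-valued Weil--Deligne representation over $\overline\Q$: multiplying by a central finite-order character valued in $\bG(\Q)$ preserves the relation $\Ad(r(w))N=q^{\alpha(w)}N$. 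Multiplication by a central character valued in $\bG(\Q)$ commutes with $\Gal(\overline\Q/\Q)$ and with conjugation, so the conjugacy class of $\rho_v$ is $G_\Q$-invariant.

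It remains to check compatibility of the Weil--Deligne construction with twisting by $\epsilon$. In other words, we need $\rho(\epsilon\cdot\phi)\sim\epsilon\cdot\rho(\phi)$ whenever $\epsilon$ is a finite-order character into the $\Q$-points of the center. For $\ell\neq p$ this is immediate from \Cref{Example: WD rep ell adic case}: $\epsilon$ is trivial on an open subgroup of inertia, so $N$ is unchanged and $r$ is multiplied by $\epsilon$. For $\ell=p$ we would use the Tannakian description of \Cref{Example: WD rep p adic case}. Twisting by a finite-order character $\epsilon$ with values in a central $\mu_2\subset\bG$ changes $D_{\pst}$ by the twist by $D_{\pst}(\epsilon)$. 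As a Weil--Deligne representation this twist is simply $\epsilon|_{W_{F_v}}$ with $N=0$, so by the tensor compatibility of $D_{\pst}$ the resulting $\bG$-valued Weil--Deligne representation is $\epsilon\cdot\rho(\phi)$. Combining the three steps gives $\rho_v\otimes\overline\Q_\ell\sim\rho^{\MT}_{\ell,v}$ for every $\ell$.

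The main obstacle we expect is this last $p$-adic point, namely making rigorous that a central twist commutes with the $\bG$-valued $D_{\pst}$ construction up to $\bG(\overline\Q_p)$-conjugacy. We would handle it by viewing $\epsilon\phi$ as the image of $(\epsilon,\phi)$ under the multiplication map $\langle s\rangle\times\bG\to\bG$ and applying functoriality (\Cref{rem:functorial in groups}) to this homomorphism. A secondary point is verifying the hypotheses of Kisin--Zhou for $A$, in particular the connectedness assumption discussed in the first step.
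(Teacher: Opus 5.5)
Your proposal is correct and follows essentially the same route as the paper. You apply Kisin--Zhou to the Kuga--Satake abelian variety $A$, push the resulting $\MT(A_\C)$-valued representation forward along the isogeny $R$, twist by the central sign character $\epsilon_v$, and invoke functoriality together with the fact that the Weil--Deligne construction sends $\epsilon|_{G_{F_v}}$ to $(\epsilon_v,0)$; your connectedness check for the monodromy of $A$ (via \Cref{Proposition:LLV and Kuga-Satake}(1) and the graph argument) and your treatment of the $p$-adic twist through the multiplication map $\langle s\rangle\times\bG\to\bG$ just make explicit details the paper leaves implicit.
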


\begin{proof}
By \Cref{Lemma: LLV and Kuga-Satake hodge structure} and \Cref{Proposition:LLV and Kuga-Satake}, there are an isogeny $R\colon \MT(A_\C) \to\bG$ over $\Q$ and a continuous character $\epsilon\colon G_F\to\langle s\rangle$ such that $ R\circ\phi^{\KS}_\ell=\epsilon\phi^{\MT}_\ell $ for every $\ell$.
By \Cref{Theorem:strongly compatiblity for abelian variety}, there exists an $\MT(A_\C)$-valued Weil--Deligne representation $\rho^A_v=(r^A_v,N^A_v)$ of $F_v$ over $\overline{\Q}$ realizing the resulting rationally strongly compatible system.
Write $\epsilon_v$ for the restriction of $\epsilon$ to $W_{F_v}$ and set
\[
\rho_v\coloneqq (\epsilon_v\cdot(R\circ r^A_v),\,dR(N^A_v) ).
\]
Since $R$ is defined over $\Q$ and $\epsilon_v$ takes values in $Z(\bG)(\Q)$, the $\bG(\overline{\Q})$-conjugacy class of $\rho_v$ is $G_\Q$-invariant.
The constructions given in \Cref{Example: WD rep ell adic case,Example: WD rep p adic case} are compatible with tensor products and send the finite-order character $\epsilon|_{G_{F_v}}$ to $(\epsilon_v,0)$.
Together with functoriality under $R$, we see that
\[
(\rho_v)_{\overline{\Q}_\ell}
\sim_{\bG(\overline{\Q}_\ell)}
\rho^{\MT}_{\ell,v}
\]
for every $\ell$, including $\ell=p$.
This proves the assertion.
\end{proof}

\begin{rem}\label{Remark: local strong compatibility}
The assertion of \Cref{Theorem: strong compatibility of MT representation} also holds over a finite extension $K/\Q_p$ equipped with an embedding $K\hookrightarrow\C$.
More precisely, let $X$ be a \hk variety over $K$ with $b_2(X)\geq4$.
Assume that, for every $\ell$, the Galois representation on $H^\bullet_\et(X_{\overline K},\Q_\ell)$ takes values in $\MT(X_\C)(\Q_\ell)$, and that $X$ admits a Kuga--Satake abelian variety over $K$ with semistable reduction.
Then the associated $\MT(X_\C)$-valued Weil--Deligne representations form a rationally strongly compatible system.
(Here the notions of a Kuga--Satake abelian variety and rational strong compatibility are understood by extending the definitions above to this local setting in the obvious way.)
Indeed, the comparison with Kuga--Satake abelian varieties also holds in this setting by descent to a finitely generated subfield of $K$ and then the same proof applies using the local version of \Cref{Theorem:strongly compatiblity for abelian variety}; see \cite[Remark~5.3.13]{KZb}.
\end{rem}

\begin{cor}\label{Corollary: independence of Frobenius eigenvalues}
Let $K$ be a finite extension of $\Q_p$, and let $X$ be a \hk variety over $K$.
For every $\ell$ and $i$, let $\rho_{\ell,i}=(r_{\ell,i},N_{\ell,i})$ be the Weil--Deligne representation of $K$ over $\overline{\Q}_\ell$ associated with $H^i_\et(X_{\overline K},\Q_\ell)$.
There exists a finite extension $L/K$ such that, for every $i$ and every geometric Frobenius lift $\Frob_L\in W_L$, the characteristic polynomial $ \det (T-r_{\ell,i}(\Frob_L)) $ belongs to $\Z[T]$ and is independent of $\ell$, including $\ell=p$.
If $b_2(X)=3$, one may take $L=K$.
\end{cor}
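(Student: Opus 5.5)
The plan is to separate the case $b_2(X)\geq4$, where the local version of \Cref{Theorem: strong compatibility of MT representation} (see \Cref{Remark: local strong compatibility}) applies, from the case $b_2(X)=3$, where $\bG_2$ is a torus.

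\emph{Case $b_2(X)\geq4$.} First descend $X$ to a finitely generated subfield $F\subset K$ and embed $\overline K\hookrightarrow\C$. Choose a finite extension $F^c/F$ over which all algebraic monodromy groups are connected. Then, by \Cref{thm: MT conjecture}, the Galois action of $G_{KF^c}$ on total cohomology factors through $\bG(\Q_\ell)$ for every $\ell$. Next apply \Cref{Lemma: existence of KS abelian variety} with $m\in\{3,4\}$ chosen prime to $p$. Since $\K_m^{\ad}$ is open, after a further finite extension we obtain a Kuga--Satake abelian variety with semistable reduction at the place above $p$. Let $L$ be the resulting compositum with $K$. By \Cref{Remark: local strong compatibility}, there is a $\bG$-valued Weil--Deligne representation $\rho=(r,N)$ over $\overline\Q$ whose $\bG(\overline\Q)$-conjugacy class is $G_\Q$-invariant and whose base change to $\overline\Q_\ell$ is equivalent to $\rho^{\MT}_\ell$ for every $\ell$.

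Compose with $p_i$. Then $p_i\circ\rho_{\overline\Q_\ell}\sim\rho_{\ell,i}|_L$, so $\det(T-r_{\ell,i}(\Frob_L))=\det(T-p_i(r(\Frob_L)))$, and this polynomial lies in $\overline\Q[T]$ and does not depend on $\ell$. Its coefficients are fixed by $G_\Q$ because the conjugacy class of $\rho$ is $G_\Q$-invariant and characteristic polynomials are conjugation invariant; hence they lie in $\Q$. Changing the Frobenius lift only multiplies by an element of $r(I_L)$. We must either absorb this or enlarge $L$ so that $r(I_L)$ becomes trivial: since $r(I_L)$ is finite, we pass to a finite extension that kills it, which is uniform in $\ell$ by compatibility. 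Integrality comes from the local eigenvalue properties: for $\ell\neq p$ the eigenvalues are $\ell$-adic units, so the coefficients are $\ell$-integral for all $\ell\neq p$. For the $p$-part we use the case $\ell=p$ with the $\ell$-adic Hodge theory bound (Newton above Hodge), or alternatively the fact that the Weil numbers $q^{(i+j)/2}$ from \Cref{Theorem:purity of Weil-Deligne representations} are algebraic integers when $i+j\ge0$. The latter holds on $\gr_j^M$ only with appropriate sign conventions, and we would verify it via the Kuga--Satake abelian variety, whose Frobenius eigenvalues are algebraic integers by semistable reduction: on the toric part they are roots of unity, and on the abelian part they are Weil numbers of weight $1$. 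Integrality then passes through the representation $R$, since the matrix coefficients of $p_i\circ R$ are polynomial in those of $\GSpin$ tensors twisted by $\nu^{-1}$, with $\nu\circ\phi^{\KS}=\chi^{-1}$ contributing $q$.

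\emph{Case $b_2(X)=3$.} Here $\bG_2$ is a torus and $p_2\colon\bG\to\bG_2$ is an isogeny, so $\bG$ is a torus. By \Cref{lem:weighted tensor generation}, $V_i^{\otimes2}$ is built from $V_2$. The image of inertia in $\bG_2$ is finite with trivial $N$, by purity on the torus. The Frobenius eigenvalues on $H^i$ come from the characters of the torus $\bG$ evaluated at $r(\Frob)$. Independence of $\ell$ in degree $2$ holds over $K$ itself, via the Weil conjectures after descent, or via CM theory since $H^2$ is then of CM type. We would transfer this to all degrees via the character group of $\bG$, which is a finite-index sublattice of the character group of $\bG_2$ up to $s$. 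The expected main obstacle is showing that no finite extension is needed here, in particular that the inertia action and the sign character $\epsilon$ are harmless for characteristic polynomials over $K$.
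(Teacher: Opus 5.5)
\textbf{The case $b_2(X)\geq4$.} This part follows the paper's argument. You pass to a finite extension $L$ over which the monodromy is connected and a semistable Kuga--Satake abelian variety exists, invoke \Cref{Remark: local strong compatibility}, and compose with $p_i$. The $G_\Q$-invariance of the conjugacy class then gives rationality. No further extension is needed to handle other Frobenius lifts, because the equivalence $\rho_{\overline\Q_\ell}\sim\rho^{\MT}_\ell$ holds on all of $W_L$.

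For integrality the paper cites \cite[Proposition~2.1]{Ochiai}. Your first route is a valid substitute: $\ell$-integrality for each $\ell\neq p$ comes from the $\Z_\ell$-lattice, and $p$-integrality from Newton above Hodge for $D_{\st}$, whose Hodge--Tate weights lie in $[0,i]$. Your second route does not work: a Weil number need not be an algebraic integer (e.g.\ $(3+4\sqrt{-1})/5$), so purity alone gives nothing.

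\textbf{The gap: $L=K$ when $b_2(X)=3$.} You leave this open, and the tools you list cannot reach it.
\begin{itemize}
\item Over $K$ the monodromy group may be disconnected, so $r_{\ell,i}(\Frob_K)$ need not come from $\bG(\overline\Q_\ell)$. Your ``characters of $\bG$ evaluated at $r(\Frob)$'' only make sense over $K^c$, and the characteristic polynomial of $\Frob_K$ cannot be recovered from that of a power.
\item The Weil conjectures only compute Frobenius at places of good reduction, and $X$ is not known to have even potentially good reduction.
\item A CM or Kuga--Satake comparison likewise only exists after a finite extension.
\item The lattice inclusion runs the other way from what you state: $p_2^*$ embeds $X^*(\bG_2)$ into $X^*(\bG)$ with index $|\ker p_2|\le 2$. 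So degree-two data determine eigenvalues in odd degrees only up to sign, which is exactly the ambiguity you flag but do not resolve.
\end{itemize}

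\textbf{The missing idea.} Use a cohomological $\ell$-independence theorem valid over $K$ itself, and use the torus only to kill $N$.
\begin{enumerate}
\item By \cite[Theorems~B and~D]{Ochiai}, the alternating traces $\sum_i(-1)^i\operatorname{Tr}(r_{\ell,i}(\Frob_K^n))$, $n\geq1$, are rational and independent of $\ell$, including $\ell=p$. Hence so is the alternating product of the characteristic polynomials.
\item Since $\bG$ is a torus, \Cref{thm: MT conjecture} forces $N_{\ell,i}=0$: a nilpotent element of the Lie algebra of a torus vanishes, and $N$ is unchanged under finite extension.
\item Then \Cref{Theorem:purity of Weil-Deligne representations} makes every Frobenius eigenvalue on $H^i$ a Weil $q^i$-number.
\end{enumerate}
Distinct degrees therefore have disjoint eigenvalues, so the alternating product has no cancellation. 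Each factor $\det(T-r_{\ell,i}(\Frob_K))$ is then individually rational and $\ell$-independent, with no extension of $K$.
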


\begin{proof}
Fix an embedding $\overline{K}\hookrightarrow\C$.
Suppose first that $b_2(X)=3$.
Then $\MT(X_\C)$ is a torus.
Thus, by \Cref{thm: MT conjecture}, we see that $H^i_\et(X_{\overline K},\Q_\ell)$ is potentially unramified if $\ell \neq p$ and potentially crystalline if $\ell = p$ for all $i$.
By the weight--monodromy theorem (\Cref{Theorem:purity of Weil-Deligne representations}), all the Frobenius eigenvalues in degree $i$ are therefore of weight $i$.
By \cite[Theorems~B and~D]{Ochiai}, the alternating traces of all positive powers of $\Frob_K$ are rational and independent of $\ell$, including $\ell=p$.
Thus the alternating product of characteristic polynomials is rational and independent of $\ell$.
Since different cohomological degrees have distinct weights, each factor has the same properties as well.

Suppose now that $b_2(X)\geq4$.
After a finite extension $L/K$, the Galois representations are valued in the Mumford--Tate group, and $X_L$ admits a Kuga--Satake abelian variety over $L$ with semistable reduction.
The assertion then follows from \Cref{Remark: local strong compatibility}.

Finally, the Frobenius eigenvalues for $\ell\neq p$ are algebraic integers by \cite[Proposition~2.1]{Ochiai}.
Hence the common characteristic polynomials belong to $\Z[T]$ for all $\ell$ (including $\ell =p$).
\end{proof}

The following example shows that the character $\epsilon$ in \Cref{Proposition:LLV and Kuga-Satake} need not be trivial in general.

\begin{ex}\label{ex:nontrivial central inertia character}
Let $F$ be a number field, let $v\mid 3$, and let $C$ be an elliptic curve over $F$ such that $ \End_F(C)_\Q=\End_{\overline F}(C_{\overline F})_\Q\simeq\Q(\sqrt{-1})$ and all $3$-torsion points of $C$ are rational over $F$.
Suppose that there is a quadratic character $\epsilon \colon G_F\to\{\pm1\}$, ramified at $v$, such that the quadratic twist $C^\epsilon$ of $C$ by $\epsilon$ has good reduction at $v$.
Then for every $\ell \neq 3$, the inertia subgroup $I_v$ acts on $H^1_{\et}(C_{\overline F}, \Q_\ell)$ through the character $\epsilon$.

Let $S\coloneqq  C \times C$ and let $X=K_2(S)$ be the associated generalized Kummer fourfold over $F$.
We have $G_F$-equivariant decompositions
\[
\begin{aligned}
H^2_\et(X_{\overline F}, \Q_\ell)&\simeq H^2_\et(S_{\overline F}, \Q_\ell)\oplus\Q_\ell(-1),\\
H^3_\et(X_{\overline F}, \Q_\ell)&\simeq H^3_\et(S_{\overline F}, \Q_\ell)\oplus H^1_\et(S_{\overline F}, \Q_\ell)(-1),\\
H^4_\et(X_{\overline F}, \Q_\ell)&\simeq\Sym^2 H^2_\et(X_{\overline F}, \Q_\ell)\oplus \Q_\ell(-2)^{ \oplus 80}
\end{aligned}
\]
and similarly for Betti cohomology groups.
For the third identification, see \cite[Theorem~1.1]{FH23}.
These decompositions are obtained from algebraic correspondences defined over $F$ and are compatible with the \'etale--Betti comparison.

Put
\[
T\coloneqq \MT(C_\C)\simeq
\operatorname{Res}_{\Q(\sqrt{-1})/\Q}\GG_m.
\]
Since all endomorphisms of $C_{\overline{F}}$ are defined over $F$, the Galois representation on $H^1_\et(C_{\overline F},\Q_\ell)$ takes values in $T(\Q_\ell)$ for every $\ell$.
The above decompositions give a representation
\[
\Phi\colon T\longrightarrow
\prod_j\GL(H^j_B(X_\C,\Q))
\]
such that $\Phi(T)=\MT(X_\C)$ and, for every $\ell$, the composition
\[
G_F \longrightarrow T(\Q_\ell)
\xlongrightarrow{\Phi}
\prod_i\GL(H^i_\et(X_{\overline F},\Q_\ell))
\]
is the usual Galois representation on the total cohomology of $X$.
In particular, the algebraic monodromy group of $X$ is connected (for every $\ell$).

Finally, let $U\subset V^2_B(1)$ be the orthogonal complement of $\Pic(X)_\Q=\Pic(X_{\overline{F}})_\Q$.
The CM structure gives
\[
T\simeq \GSpin(U)\longrightarrow \GSpin(V^2_B(1)).
\]
Since $C^\epsilon$ and $C$ are isomorphic over $\C$, they have the same $T$-representation on $H^1_B$.
Thus there exists a $T$-equivariant isomorphism
\[
\theta \colon H^1_B((C^\epsilon)_{\C},\Q)^{\oplus 64} \isomto \Cl(V^2_B(1)).
\]
We set $A\coloneqq (C^\epsilon)^{64}$.
By construction, one can check that $A$, equipped with the isomorphism $\theta$, is a Kuga--Satake abelian variety over $F$ in the sense of \Cref{Definition: Kuga-Satake abelian variety}.
Since $A$ has good reduction at $v$,
the inertia subgroup $I_v$ acts trivially on $H^1_\et(A_{\overline F},\Q_\ell)$
for $\ell\neq3$.
On the other hand, it acts on $H^3_\et(X_{\overline F},\Q_\ell)$ through the nontrivial character $\epsilon|_{I_v}$.
Thus the character in \Cref{Proposition:LLV and Kuga-Satake} is nontrivial.
\end{ex}

\subsection{An integral refinement of strong compatibility}\label{subsection:An integral refinement of strong compatibility}

Keep the notation and hypotheses of \Cref{Subsection:definitions strong compatibility}.
In particular, $X$ denotes an abelian variety over $F$, or a \hk variety over $F$.
Assume in addition that $b_2(X)\geq4$
if $X$ is a \hk variety.
Put
\[
\Lambda_B=
\begin{cases}
H^1_B(X_\C,\Z), & \text{if $X$ is an abelian variety},\\
H^\bullet_B(X_\C,\Z), & \text{if $X$ is a \hk variety}.
\end{cases}
\]
Let $\bG\coloneqq \MT(X_\C)$.
Let $v \mid p$ be a finite place of $F$ and let $q$ be the order of the residue field of $F_v$.
If $X$ is an abelian variety, assume that it has semistable reduction at $v$.
If $X$ is a \hk variety, assume that it admits a Kuga--Satake abelian variety over $F$ with semistable reduction at $v$.

By \Cref{Theorem:strongly compatiblity for abelian variety,Theorem: strong compatibility of MT representation}, we may choose a Frobenius semisimple $\bG$-valued Weil--Deligne representation $\rho_v=(r_v,N_v)$ over $\overline{\Q}$ such that
\[
(\rho_v)_{\overline{\Q}_\ell}
\sim_{\bG(\overline{\Q}_\ell)}
\rho^{\MT}_{\ell,v}
\]
for every $\ell$.
We note that $r_v(I_{F_v})$ is trivial in the abelian case
by the semistable reduction assumption.
In the hyper-K\"ahler case, it is contained in
$Z(\bG)(\overline{\Q})$ by
\Cref{Proposition:LLV and Kuga-Satake}.

\begin{defn}\label{Definition:the integer D_0}
    We fix a positive integer $D_0$ divisible by $p$, with the following properties:
\begin{enumerate}[label=(\alph*)]
\item $\Lambda_{B,\Z[1/D_0]}\coloneqq \Lambda_B\otimes_\Z\Z[1/D_0]$ is torsion-free.
\item The schematic closure $\cG\subset\GL(\Lambda_{B,\Z[1/D_0]})$ of $\bG$ is a reductive group scheme over $\Z[1/D_0]$.
\item Every prime $\ell\nmid D_0$ satisfies $\ell>b_1(X)$ when $X$ is an abelian variety and $\ell > \max_i  b_i(X)$ when $X$ is a \hk variety.
\item $q-1\in\Z[1/D_0]^\times$ and for every geometric Frobenius lift $\Frob_q\in W_{F_v}$, any two distinct eigenvalues $a,b$ of $r_v(\Frob_q)$ on $H^i_B(X_\C,\Q)\otimes_\Q\overline{\Q}$ satisfy
$
a-b\in\overline{\Z}[1/D_0]^\times,
$
where $i=1$ if $X$ is an abelian variety and $i$ is arbitrary if $X$ is a \hk variety.
\item $\rho_v$ extends to a $\cG$-valued Weil--Deligne representation of $F_v$ over $\overline{\Z}[1/D_0]$:
\[
\rho^{\integ}_v=(r^{\integ}_v,N^{\integ}_v).
\]
Here $\overline{\Z}$ is the ring of algebraic integers in $\overline{\Q}$.
\end{enumerate}
These conditions can be achieved by taking $D_0$ sufficiently divisible.
\end{defn}

For a prime $\ell\nmid D_0$, the representation $\phi^{\MT}_\ell$ takes values in $\cG(\Z_\ell)$.
By \Cref{Example: integral Weil-Deligne construction} and \Cref{Proposition: independence of integral WD construction}, we obtain the associated $\cG$-valued Weil--Deligne representation
\[
\rho^{\MT,\integ}_{\ell,v}
=(r^{\MT,\integ}_{\ell,v},N^{\MT,\integ}_{\ell,v})
\]
of $F_v$ over $\overline{\Z}_\ell$, whose equivalence class is independent of all choices in the construction.
Its generic fiber is equivalent to $\rho^{\MT}_{\ell,v}$.

\begin{prop}
\label{Proposition: ell independence of Weil part of integral MT representation}
Let $\ell\nmid D_0$ be a prime.
The representations $(r^{\integ}_v)_{\overline{\Z}_\ell}$ and $r^{\MT,\integ}_{\ell,v}$ are $\cG(\overline{\Z}_\ell)$-conjugate.
Here $(r^{\integ}_v)_{\overline{\Z}_\ell}$ denotes the base change of $r^{\integ}_v$ along the map $\overline{\Z}[1/D_0] \to \overline{\Z}_\ell$ induced by $\iota_\ell$.
Moreover, their scheme-theoretic centralizers in $\cG_{\overline{\Z}_\ell}$ are smooth over $\overline{\Z}_\ell$.
\end{prop}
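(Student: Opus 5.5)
The plan has three steps. First we pass from $\overline{\Q}_\ell$-conjugacy to $\overline{\Z}_\ell$-conjugacy. Then we check that the centralizers are smooth using \Cref{Lemma: smoothness of Frobenius centralizer}. The descent in the first step is done on a finite group, which keeps it elementary.

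\textbf{Setup.} Put $r_1\coloneqq (r^{\integ}_v)_{\overline{\Z}_\ell}$ and $r_2\coloneqq r^{\MT,\integ}_{\ell,v}$. By construction their generic fibers are the Weil parts of $(\rho_v)_{\overline{\Q}_\ell}$ and $\rho^{\MT}_{\ell,v}$. These are equivalent over $\overline{\Q}_\ell$, so there is $g\in\cG(\overline{\Q}_\ell)$ with $\Ad(g)\circ r_1=r_2$.

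We now produce a single element that controls all of $r_i(W_{F_v})$. In both the abelian and hyper-K\"ahler cases, $r_i(I_{F_v})$ is finite and central (trivial, resp.\ contained in $Z(\bG)$). We fix a geometric Frobenius lift $\Frob_q$ and put $x_i\coloneqq r_i(\Frob_q)$. Each $x_i$ is semisimple by Frobenius semisimplicity (\Cref{prop:global Frobenius semisimplicity}, or the abelian analogue). Since the inertia image is central and finite, $r_i(W_{F_v})$ is generated by $x_i$ together with the central finite group $r_i(I_{F_v})$. Moreover $r_1|_{I}=r_2|_{I}$, because conjugation fixes central elements.

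Hence it suffices to find $h\in\cG(\overline{\Z}_\ell)$ with $\Ad(h)(x_1)=x_2$.

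\textbf{Key step: integral conjugacy of $x_1$ and $x_2$.} Let $Tr\coloneqq\Set{h\in\cG_{\overline{\Z}_\ell}\given \Ad(h)x_1=x_2}$ be the transporter scheme. It is a torsor under $Z_{\cG_{\overline{\Z}_\ell}}(x_1)$ over the generic point, and we want it to have a $\overline{\Z}_\ell$-point.

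By hypothesis (d) on $D_0$, distinct eigenvalues of $x_1$ are congruent-distinct. The same holds for $x_2$, since it has the same eigenvalues: it is generically conjugate to $x_1$. So \Cref{Lemma: smoothness of Frobenius centralizer} applies. It shows that $Z(x_i)$ is smooth and contains a maximal torus $T_i$ of $\cG_{\overline{\Z}_\ell}$.

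Over the strictly henselian base $\overline{\Z}_\ell$, maximal tori of the reductive group scheme $\cG$ are $\cG(\overline{\Z}_\ell)$-conjugate (SGA3). So after an integral conjugation we may assume $x_1,x_2\in T(\overline{\Z}_\ell)$ for one split maximal torus $T$. Since $x_1,x_2$ are generically conjugate in $\cG$, they are conjugate by the Weyl group $W=N(T)/T$. Over the split torus, $N_\cG(T)(\overline{\Z}_\ell)\to W$ is surjective, so a lift $n\in\cG(\overline{\Z}_\ell)$ of the relevant Weyl element gives $\Ad(n)x_1=x_2$.

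The main obstacle I expect is this torus argument. One must check that $x_i$ lies in the centralized torus, i.e.\ in the center of $Z(x_i)^\circ$, or at least in some maximal torus of it. Since $x_i$ is semisimple with integral eigenspace decomposition, it lies in a diagonalizable closed subgroup $D$ (its schematic closure, as in the proof of \Cref{Lemma: smoothness of Frobenius centralizer}). That $D$ embeds in a maximal torus of $Z(D)$ by \cite[Theorem~3.2.6]{Conrad2014}, which handles the issue. One should also check that the Weyl conjugacy uses only the generic conjugacy; this is standard, since two elements of $T$ conjugate in $\cG(\overline{\Q}_\ell)$ are $W$-conjugate.

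\textbf{Smoothness.} Given $h$, the centralizer $Z_{\cG_{\overline{\Z}_\ell}}(r_i)$ equals $Z(x_i)\cap Z(r_i(I))=Z(x_i)$, because $r_i(I)$ is central. This is smooth by \Cref{Lemma: smoothness of Frobenius centralizer}, which gives the second assertion.
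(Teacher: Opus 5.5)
Your proposal is correct and follows essentially the same route as the paper. You conjugate the Frobenius images integrally using \Cref{Lemma: smoothness of Frobenius centralizer}, conjugacy of maximal tori and a Weyl-element lift over the strictly henselian base, and then use that the inertia images are central and agree. The point you flag as the main obstacle is settled more directly in the paper: since the maximal torus $T_i$ lies in $Z_{\cG_{\overline{\Z}_\ell}}(x_i)$, the element $x_i$ centralizes $T_i$ and hence lies in $Z_{\cG_{\overline{\Z}_\ell}}(T_i)=T_i$, so the detour through the closure $D$ is not needed.
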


\begin{proof}
Fix a geometric Frobenius lift $\Frob_q\in W_{F_v}$, and put
\[
x\coloneqq (r^{\integ}_v)_{\overline{\Z}_\ell}(\Frob_q),
\quad
y\coloneqq r^{\MT,\integ}_{\ell,v}(\Frob_q).
\]
Their generic fibers are semisimple and $\bG(\overline{\Q}_\ell)$-conjugate by rational strong compatibility.
The centralizers of $x$ and $y$ in $\cG_{\overline{\Z}_\ell}$ are smooth and contain maximal tori $T_x$ and $T_y$ of $\cG_{\overline{\Z}_\ell}$ by \Cref{Lemma: smoothness of Frobenius centralizer}.
Since a maximal torus is its own centralizer, we have $x\in T_x(\overline{\Z}_\ell)$ and $y\in T_y(\overline{\Z}_\ell)$.
By \cite[Theorem~3.2.6]{Conrad2014}, these tori are $\cG(\overline{\Z}_\ell)$-conjugate, so we may assume that $x,y\in T(\overline{\Z}_\ell)$ for a common maximal torus $T$.
Their generic fibers then lie in the same orbit with respect to the action of the Weyl group associated with $T$.
The corresponding element in the Weyl group lifts to an element $g \in N_{\cG_{\overline{\Z}_\ell}}(T)(\overline{\Z}_\ell)$ since the Weyl group (scheme) is finite \'etale and $\overline{\Z}_\ell$ is strictly henselian.
It follows that $gxg^{-1}=y$.

The images of the inertia subgroup for both representations are central, and rational strong compatibility implies that their restrictions to the inertia subgroup are the same.
It follows that $(r^{\integ}_v)_{\overline{\Z}_\ell}$ and $r^{\MT,\integ}_{\ell,v}$ are conjugate by the element $g$.
Their centralizers equal those of $x$ and $y$, respectively, and hence are smooth over $\overline{\Z}_\ell$.
\end{proof}

We shall prove that there is a positive integer $D$, divisible by $D_0$, such that for every prime $\ell\nmid D$, both $(\rho_v^{\integ})_{\overline{\Z}_\ell}$ and $\rho^{\MT,\integ}_{\ell,v}$ are pure in the sense of \Cref{Definition:integral G-valued purity}.
The key ingredient for this is the following torsion-freeness result.

\begin{prop}\label{Proposition: torsion freeness of cohomological monodromy}
Let $K$ be a finite extension of $\Q_p$, and let $X$ be an abelian variety or a \hk variety over $K$.
Fix integers $i,m,n \geq 0$, and consider the $G_K$-module
\[
H^i_{\et}(X_{\overline K},\Z_\ell)^{\otimes m}\otimes_{\Z_\ell}
H^i_{\et}(X_{\overline K},\Z_\ell)^{\vee, \otimes n}.
\]
For all but finitely many $\ell \neq p$, the monodromy operator $N_\ell$ of the Weil--Deligne representation $\rho_\ell=(r_\ell, N_\ell)$ of $K$ over $\overline{\Z}_\ell$ associated with this $G_K$-module (\Cref{Example: integral Weil-Deligne construction}) has the following property \textup{(t-f)}: For all $j \geq 0$, the cokernel of $(N_\ell)^{j}$ is torsion-free.
\end{prop}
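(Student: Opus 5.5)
We plan to reduce (t-f) for the tensor module to a statement about the degree-one (abelian) or degree-two (hyper-K\"ahler) lattice, following the paper's strategy for the weight--monodromy conjecture. We first pass to a finite extension $L/K$ of degree prime to $\ell$ for all but finitely many $\ell$. We may take $L$ fixed, independent of $\ell$, with semistable reduction of $X$ (abelian case) or of the Kuga--Satake abelian variety $A$ (hyper-K\"ahler case, via \Cref{Lemma: existence of KS abelian variety}), and with Galois image in the Mumford--Tate group as in \Cref{Subsection:MT-WD representation}. Discarding the finitely many $\ell$ dividing $[L:K]$ or $p$, the operator $N_\ell$ is unchanged up to a unit scalar under this restriction, so we may assume $K=L$. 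Cokernels of $N^j$ are insensitive to unit scalars and to base change $\Z_\ell\to\overline{\Z}_\ell$.

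For an abelian variety with semistable reduction, Grothendieck's orthogonality theorem (SGA7, IX) gives an integral description of $N$ on $T=H^1_\et(X_{\overline K},\Z_\ell)$. There is a filtration $0\subset T^t\subset T^f\subset T$, with $N$ inducing a map $T/T^f\to T^t(-1)$ that comes from the monodromy pairing on character groups of tori. This pairing is a fixed $\Z$-bilinear map, independent of $\ell$. For $\ell$ not dividing its discriminant, $N\colon T/T^f\to T^t$ is an isomorphism onto a saturated sublattice, and $N^2=0$. Then $T$ decomposes over $\Z_\ell$ as a direct sum of copies of the standard $\fsl_2$-modules of dimension $1$ and $2$, i.e.\ $N$ is in Jordan form with blocks of size at most $2$ over $\Z_\ell$. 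Tensor powers of such lattices, and of their duals, are again sums of tensor products of standard $\Z_\ell$-Jordan blocks. For $\ell$ larger than the total rank, the Clebsch--Gordan decomposition holds integrally: $\fsl_2$-modules of highest weight $<\ell$ behave as in characteristic zero. Hence the tensor module again decomposes into standard Jordan blocks over $\Z_\ell$, and every $\operatorname{coker}(N^j)$ is torsion-free.

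For a hyper-K\"ahler variety, we use \Cref{lem:weighted tensor generation}, \Cref{Proposition:LLV and Kuga-Satake} and the Kuga--Satake abelian variety $A$. Degree $i$ cohomology is a representation of $\MT(A_\C)$ through $R$, and the integral twisted LLV representation $R$ is defined over $\Z[1/D']$ for suitable $D'$. Hence, for $\ell$ large, $H^i_\et(X_{\overline K},\Z_\ell)$ is a direct summand, as a $\cG$-module over $\Z_\ell$, of a finite sum of tensor constructions on $H^1_\et(A_{\overline K},\Z_\ell)$ and its dual. This uses the fact that in large characteristic, representations of reductive group schemes of bounded weight are semisimple, with the usual summand relations holding integrally, since the relevant idempotents have denominators divisible only by finitely many primes. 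The same then holds for the given tensor module in degree $i$. The character $\epsilon$ is of finite order and central, so it does not affect $N$. Direct summands of modules with property (t-f) have (t-f) when $N$ preserves the splitting, and the abelian case then gives the result.

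The main obstacle is making the summand relation integral uniformly in $\ell$. One needs the splitting maps to be defined over $\Z[1/D]$ and $\cG$-equivariant, hence $N$-equivariant. We would obtain this from explicit $\Z[1/D]$-models of the LLV representation and of projectors realizing $V_i^{\otimes 2}$, or $V_i$ itself, inside tensor spaces of $H^1(A)$. Inverting the finitely many primes that appear in their denominators then gives the required uniformity.
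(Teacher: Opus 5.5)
Your argument is essentially correct, but it follows a different route from the paper.

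\textbf{The paper's route.} The paper first shows that (t-f) is stable under duals and tensor products for $\ell$ large. It does this via [Ito21, Lemma~2.10], which says (t-f) is equivalent to compatibility of the monodromy filtration with reduction to $\overline{\F}_\ell$, together with the fact that Deligne's formulas in Weil~II~(1.6.9) survive mod $\ell$. This reduces the problem to $H^i$ itself. The paper then cites Ito21 for abelian varieties and for $H^2$ of a hyper-K\"ahler variety. Next it uses $p_2\colon\bG\to\bG_2$, which only realizes the tensor square $(H^i)^{\otimes2}$ inside tensor constructions on $H^2$. It finishes by showing that (t-f) can be detected on tensor squares.

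\textbf{Your route.} You differ in two places.
\begin{enumerate}
\item You make the abelian input explicit. Grothendieck's orthogonality theorem gives an integral Jordan form with blocks of size at most $2$ for $\ell$ prime to the discriminant of the monodromy pairing. Integral Clebsch--Gordan then handles tensor constructions for $\ell$ large. This is the same stability principle as the paper's, proved by hand.
\item In the hyper-K\"ahler case you pass through $H^1$ of the Kuga--Satake variety. Since $R\colon\MT(A_\C)\to\bG$ is an isogeny and $H^1(A)$ is faithful for the reductive group $\MT(A_\C)$, the module $H^i$ itself, and not only its square, is a direct summand of tensor constructions on $H^1(A)$. The summand is $N$-stable for large $\ell$, and the central finite-order twist $\epsilon$ is invisible to $N$.
\end{enumerate}

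\textbf{What each approach buys.} Your route needs neither Ito21's degree-two result nor the tensor-square detection step. The price is the whole arithmetic Kuga--Satake package:
\begin{itemize}
\item descending $X$ from $K$ to a finitely generated field and applying Andr\'e's construction there;
\item semistable reduction of $A$ over a fixed extension;
\item the character $\epsilon$ of the proposition comparing $\phi^{\KS}_\ell$ and $\phi^{\MT}_\ell$.
\end{itemize}
The paper only has this package when $b_2(X)\geq4$. The paper's argument, by contrast, is uniform in $b_2$.

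\textbf{What you still need to add.}
\begin{itemize}
\item You must treat $b_2(X)=3$ separately; your proposal omits this case. The fix is easy. Here $\bG$ is a torus, so after passing to $K^c$ the monodromy operator is a nilpotent element of $\Lie(\bG)_{\overline{\Q}_\ell}$, hence $N_\ell=0$. Then (t-f) is trivial as soon as $H^i_{\et}(X_{\overline K},\Z_\ell)$ is torsion-free.
\item For an abelian variety and $i\geq2$, say explicitly that $H^i=\wedge^iH^1$ is an $N$-stable direct summand of $(H^1)^{\otimes i}$ for $\ell>i$. This way your computation on $H^1$ covers all degrees.
\end{itemize}
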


\begin{proof}
The property \textup{(t-f)} is preserved by duals and tensor products after excluding finitely many further primes.
Indeed, by \cite[Lemma~2.10]{Ito21} and Nakayama's lemma,
\textup{(t-f)} is equivalent to compatibility of the monodromy
filtration with reduction to $\overline{\F}_\ell$.
This compatibility is preserved by duals and tensor products,
since the formulas given in \cite[Proposition~(1.6.9)]{WeilII}
remain valid over $\overline{\F}_\ell$
outside a finite set of primes depending only on
the ranks of the underlying modules.
Thus it suffices to consider the Weil--Deligne representation $\rho_\ell=(r_\ell, N_\ell)$ of $K$ over $\overline{\Z}_\ell$ associated with $H^i_{\et}(X_{\overline K},\Z_\ell)$.
If $X$ is an abelian variety, then the assertion is proved in \cite{Ito21}; more precisely, this follows from \cite[Theorem~3.7 and Proposition~3.10(ii)]{Ito21}.
We assume that $X$ is a \hk variety over $K$.
If $i=2$, then this again follows from \cite{Ito21}.
We shall deduce the general case from this case.

We choose an embedding $K \hookrightarrow \C$ and let $\bG\coloneqq \MT(X_\C)$.
We may assume that $K=K^c$ and obtain the representation $\phi^{\MT}_\ell \colon G_K \to \bG(\Q_\ell)$ for all $\ell$ as in \Cref{Subsection:MT-WD representation}.
By \Cref{lem:weighted tensor generation}, $H^i_B(X_\C,\Q)^{\otimes2}$ is a direct summand of a finite direct sum of $H^2_B(X_\C,\Q)^{\otimes a} \otimes H^2_B(X_\C,\Q)^{\vee, \otimes b}$ for some $a, b$ as a $\bG$-representation.
From this, we see that $H^i_{\et}(X_{\overline K},\Z_\ell)^{\otimes2}$ is a direct summand of a finite direct sum of $H^2_{\et}(X_{\overline K},\Z_\ell)^{\otimes a} \otimes H^2_{\et}(X_{\overline K},\Z_\ell)^{\vee, \otimes b}$ as a $G_K$-module for all but finitely many $\ell \neq p$.
It follows that $\rho_\ell^{\otimes 2}$ satisfies property \textup{(t-f)}.
Finally, property \textup{(t-f)} can be detected on tensor squares
for sufficiently large $\ell$.
Indeed, the monodromy filtration on a tensor square is the
tensor product filtration, both in characteristic zero and,
for all but finitely many $\ell$, after reduction to
$\overline{\F}_\ell$.
Since a filtration is determined by its tensor square,
compatibility of the monodromy filtration with reduction
for $\rho_\ell^{\otimes2}$ implies the same compatibility
for $\rho_\ell$.
Thus $\rho_\ell$ also has property \textup{(t-f)}.
This completes the proof.
\end{proof}

\begin{rem}\label{Remark:torsion WMC}
The torsion analogue of the weight--monodromy conjecture formulated in \cite[Conjecture~3.5]{Ito21} also holds for hyper-K\"ahler varieties over $K$ in every degree.
Indeed, this follows from \Cref{Theorem:purity of Weil-Deligne representations} and \Cref{Proposition: torsion freeness of cohomological monodromy} together with \cite[Proposition~3.10(iii)]{Ito21}.
\end{rem}

We return to the notation and assumptions of this subsection.
Recall that $\mu \colon \GG_m \to \bG$ is the weight cocharacter.
This extends to a cocharacter of $\cG$, which we denote by the same notation $\mu$.

\begin{prop}\label{Proposition: integral purity of MT representations}
There exists a positive integer $D$ divisible by $D_0$, such that, for every prime $\ell\nmid D$, both
\[
(\rho_v^{\integ})_{\overline{\Z}_\ell}
\quad\text{and}\quad
\rho^{\MT,\integ}_{\ell,v}
\]
are pure with respect to $\mu$ in the sense of \Cref{Definition:integral G-valued purity}.
\end{prop}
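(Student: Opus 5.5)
The plan is to verify conditions (1)--(3) of \Cref{Definition:integral G-valued purity} directly for both representations and then invoke \Cref{Lemma: sufficient criterion for integral purity} for condition (4). By \Cref{Proposition: ell independence of Weil part of integral MT representation}, after conjugating by an element of $\cG(\overline{\Z}_\ell)$ we may assume that both representations have the same Weil part $r\coloneqq(r^{\integ}_v)_{\overline{\Z}_\ell}$. We also know that the centralizer $H=Z_{\cG_{\overline{\Z}_\ell}}(r)$ is smooth, which gives (3).

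For condition (1), the generic fibers are equivalent to $\rho^{\MT}_{\ell,v}$. These are Frobenius semisimple by \Cref{prop:global Frobenius semisimplicity} in the hyper-K\"ahler case and by \cite{SGA7-I} in the abelian case. They are pure with respect to $\mu$: in the hyper-K\"ahler case this is \Cref{Theorem:purity of Weil-Deligne representations} applied to each degree, and every $\bG$-representation in which $\mu$ acts by $z^i$ is a summand of tensor constructions of the total cohomology, so purity passes to it. In the abelian case we use Grothendieck's weight--monodromy theorem for $H^1$.

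For condition (2), the Frobenius-weight cocharacter $w$ is determined by $r$ alone: on each eigenspace of the semisimple element $x=r(\Frob_q)$, $w$ acts according to the archimedean absolute value of the eigenvalue, and $w$ commutes with $x$. By \Cref{Lemma: smoothness of Frobenius centralizer} and the proof of \Cref{Proposition: ell independence of Weil part of integral MT representation}, $x$ lies in a maximal torus $T\subset\cG_{\overline{\Z}_\ell}$. The cocharacter $w$ is a cocharacter of $T_{\overline{\Q}_\ell}$ determined by the eigenvalues of $x$ on $H^\bullet_B\otimes\overline{\Q}$. Since $T$ is split over the strictly henselian ring $\overline{\Z}_\ell$, its cocharacters extend, so (2) holds. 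Only finitely many $w$ arise, since $w$ is defined over $\overline{\Q}$ via $\rho_v$, so no primes need to be discarded here.

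The main work is the hypotheses of \Cref{Lemma: sufficient criterion for integral purity}.

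\textbf{Isotypic splitting.} We need $\ge_{\overline{\Z}_\ell}=\bigoplus_\tau\ge_{\overline{\Z}_\ell,\tau}$. Since $r(I_{F_v})$ is central, $\ge$ is a sum of unramified characters given by the eigenvalues of $\Ad(x)$, which are ratios of eigenvalues of $x$. After enlarging $D_0$ to $D$ so that differences of distinct such ratios are $\ell$-adic units for $\ell\nmid D$, the eigenspace projectors are integral. This is a condition on finitely many algebraic numbers coming from $\rho_v$, so it excludes only finitely many $\ell$.

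\textbf{Torsion-free cokernel.} We need $\operatorname{coker}(\ad N)$ on $\ge_{\overline{\Z}_\ell}$ to be torsion-free, for both $N=(N^{\integ}_v)_{\overline{\Z}_\ell}$ and $N=N^{\MT,\integ}_{\ell,v}$. For the Galois one, $\ge_{\Z_\ell}\subset\End(\Lambda_B\otimes\Z_\ell)$ is, for $\ell\gg0$, a $\cG$-stable direct summand: $\cG$ is reductive and $\ell$ is large, so one can use the trace form, which is nondegenerate for $\ell\gg 0$. Also $\End(\Lambda)\simeq\Lambda\otimes\Lambda^\vee$, and $\ad N$ is the induced monodromy operator. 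By \Cref{Proposition: torsion freeness of cohomological monodromy}, applied to $H^i\otimes H^{j,\vee}$ for the finitely many pairs $(i,j)$ (via tensor-power summands or directly with mixed degrees), \textup{(t-f)} holds for $\ell\gg0$. Property \textup{(t-f)} passes to direct summands, giving torsion-free cokernel on $\ge_{\overline{\Z}_\ell}$.

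For $\rho^{\integ}_v$, I expect the main obstacle: it is not a priori Galois. I would avoid it by noting that its generic fiber is equivalent to that of $\rho^{\MT,\integ}_{\ell,v}$. Both have the same $r$ and are pure rationally, so by \Cref{Proposition:uniqueness of monodromy operator rational case} the two $N$'s are conjugate by $H(\overline{\Q}_\ell)$. Alternatively, and more robustly, $N^{\integ}_v$ is a single nilpotent element defined over $\overline{\Z}[1/D_0]$. The cokernels of $\ad(N^{\integ}_v)^j$ on $\ge_{\overline{\Z}[1/D_0]}$ are finitely generated modules over a Dedekind-type ring, so their torsion is supported at finitely many primes, which can be absorbed into $D$.

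Taking $D$ to be $D_0$ times all the excluded primes, \Cref{Lemma: sufficient criterion for integral purity} gives (4) for both representations, completing the plan.
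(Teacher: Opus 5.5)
Your proposal is correct and follows essentially the same route as the paper. Conditions (1)--(3) come from the earlier results; your maximal-torus argument for (2) is a harmless variant of the paper's direct appeal to condition (d). The isotypic splitting and the torsion-freeness for $N^{\integ}_v$ come from $\rho_v$ being defined over a number field. The torsion-freeness for $N^{\MT,\integ}_{\ell,v}$ comes from a $\cG$-equivariant splitting of $\ge\subset\bigoplus_i\End(H^i)$ together with \Cref{Proposition: torsion freeness of cohomological monodromy}.

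You should drop your first suggestion for $N^{\integ}_v$. Conjugacy by $H(\overline{\Q}_\ell)$ does not preserve the lattice $\ge_{\overline{\Z}_\ell}$, so it cannot transfer integral torsion-freeness. The finiteness argument you give as the alternative is the one that works, and it is what the paper uses.
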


\begin{proof}
We want to apply \Cref{Lemma: sufficient criterion for integral purity}.
By the preceding results (and the corresponding known results for abelian varieties), the generic fibers are Frobenius semisimple and pure with respect to $\mu$.
Condition (d) implies that the Frobenius-weight cocharacters are defined over $\overline{\Z}_\ell$ for every $\ell \nmid D_0$.
Moreover, the scheme-theoretic centralizers of 
the corresponding actions of $W_{F_v}$ are smooth over $\overline{\Z}_\ell$ by \Cref{Proposition: ell independence of Weil part of integral MT representation}.
Thus conditions \textup{(1)--(3)} of \Cref{Definition:integral G-valued purity} hold.

Let $\ge\coloneqq  \Lie(\cG)$.
Since $\rho_v$ is defined over $\overline{\Q}$, it is easy to see that there exists a positive integer $D$ divisible by $D_0$, such that, for every prime $\ell\nmid D$, the isotypic decomposition of $\ge_{\overline{\Q}_\ell}$ for the action of $r^{\integ}_v(W_{F_v})$ is defined over $\overline{\Z}_\ell$.
By \Cref{Proposition: ell independence of Weil part of integral MT representation}, the same holds for $r^{\MT,\integ}_{\ell,v}$.

It remains to show that for all but finitely many $\ell$, the cokernels of
\[
d_1 \colon \ge_{\overline{\Z}_\ell} \longrightarrow \ge_{\overline{\Z}_\ell}, \quad Y \mapsto [Y, N^{\integ}_v]
\]
and
\[
d_2 \colon \ge_{\overline{\Z}_\ell} \longrightarrow \ge_{\overline{\Z}_\ell}, \quad Y \mapsto [Y, N^{\MT, \integ}_{\ell, v}]
\]
are torsion-free.
Since $\rho_v$ is defined over $\overline{\Q}$, one can easily check the assertion for $d_1$.
For $d_2$, we note that
the natural
$\bG$-equivariant inclusion
\[
\ge_\Q\hookrightarrow
\bigoplus_i\End(H^i_B(X_\C,\Q))
\]
splits since $\bG$ is reductive.
After enlarging $D$, we may assume that the inclusion and splitting extend
over $\Z[1/D]$.
Under the \'etale--Betti comparison, this splitting is also $G_{F_v}$-equivariant.
Now \Cref{Proposition: torsion freeness of cohomological monodromy} implies that after enlarging $D$ further, the cokernel of $d_2$ is torsion-free for every $\ell \nmid D$.

All the hypotheses of \Cref{Lemma: sufficient criterion for integral purity} are now satisfied for both representations.
\end{proof}

\begin{thm}
\label{Theorem:integral compatibility for HK}
There exists a positive integer $D$ divisible by $D_0$, such that, for every prime $\ell\nmid D$, we have
\[
(\rho_v^{\integ})_{\overline{\Z}_\ell}
\sim_{\cG(\overline{\Z}_\ell)}
\rho^{\MT,\integ}_{\ell,v}.
\]
\end{thm}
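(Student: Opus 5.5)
The plan is to combine the results just established and reduce the theorem to \Cref{Proposition:uniqueness of monodromy operator integral case}. Take $D$ to be the integer provided by \Cref{Proposition: integral purity of MT representations}, so $D_0\mid D$. Fix a prime $\ell\nmid D$. By that proposition, both $\rho_1\coloneqq(\rho_v^{\integ})_{\overline{\Z}_\ell}$ and $\rho_2\coloneqq\rho^{\MT,\integ}_{\ell,v}$ are pure with respect to $\mu$ in the sense of \Cref{Definition:integral G-valued purity}.

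Next I check the Frobenius semisimplicity hypothesis on generic fibers. The generic fiber of $\rho_1$ is $(\rho_v)_{\overline{\Q}_\ell}$, and $\rho_v$ was chosen Frobenius semisimple. The generic fiber of $\rho_2$ is equivalent to $\rho^{\MT}_{\ell,v}$, and semisimplicity of elements is preserved under conjugation. Hence $\rho^{\MT}_{\ell,v}$ is Frobenius semisimple:
\begin{itemize}
\item in the hyper-K\"ahler case by \Cref{prop:global Frobenius semisimplicity};
\item in the abelian case by the classical results \cite{SGA7-I}, \cite{Coleman-Iovita}, or directly because it is equivalent to $(\rho_v)_{\overline{\Q}_\ell}$ by strong compatibility.
\end{itemize}

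The hypothesis on the Weil parts comes from \Cref{Proposition: ell independence of Weil part of integral MT representation}: since $\ell\nmid D_0$, the homomorphisms $(r_v^{\integ})_{\overline{\Z}_\ell}$ and $r^{\MT,\integ}_{\ell,v}$ are $\cG(\overline{\Z}_\ell)$-conjugate. Applying \Cref{Proposition:uniqueness of monodromy operator integral case} to $\rho_1$ and $\rho_2$ then gives $\rho_1\sim_{\cG(\overline{\Z}_\ell)}\rho_2$, which is the assertion.

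I do not expect a genuine obstacle, since the real work sits in \Cref{Proposition: integral purity of MT representations}, resting on \Cref{Proposition: torsion freeness of cohomological monodromy}. The one point needing care is bookkeeping: \Cref{Proposition:uniqueness of monodromy operator integral case} needs purity for a specific representative in each equivalence class. The integral purity conditions (smooth centralizer, integrality of the weight cocharacter, surjectivity onto $\mathcal V$) are invariant under $\cG(\overline{\Z}_\ell)$-conjugation. So after conjugating $\rho_2$ so that $r_1=r_2$, both representations remain pure and the proposition applies verbatim. The class of $\rho_2$ is independent of choices by \Cref{Proposition: independence of integral WD construction}, so the conclusion is well defined.
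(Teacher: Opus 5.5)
Your proposal is correct and is essentially the paper's proof: take $D$ from \Cref{Proposition: integral purity of MT representations}, use Frobenius semisimplicity of the generic fibers and the $\cG(\overline{\Z}_\ell)$-conjugacy of the Weil parts from \Cref{Proposition: ell independence of Weil part of integral MT representation}, and conclude by \Cref{Proposition:uniqueness of monodromy operator integral case}. Your remark that integral purity is invariant under conjugation is exactly what justifies the paper's step ``we may assume $r_1=r_2$'' in that proposition.
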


\begin{proof}
By \Cref{Proposition: integral purity of MT representations}, we may choose $D$ divisible by $D_0$ such that both representations are pure with respect to $\mu$ for every $\ell \nmid D$.
Their generic fibers are Frobenius semisimple, and the representations $(r^{\integ}_v)_{\overline{\Z}_\ell}$ and $r^{\MT,\integ}_{\ell,v}$ are $\cG(\overline{\Z}_\ell)$-conjugate by \Cref{Proposition: ell independence of Weil part of integral MT representation}.
The assertion therefore follows from \Cref{Proposition:uniqueness of monodromy operator integral case}.
\end{proof}

\subsection*{Acknowledgements}
We thank Ziquan Yang for drawing our attention to the papers \cite{KZa,KZb}.
We also thank Salvatore Floccari, Tetsushi Ito, Teruhisa Koshikawa, and Teppei Takamatsu for helpful discussions, particularly on the LLV representation, strong compatibility, and the $\ell$-independence of Frobenius characteristic polynomials.
We are grateful to Zhiyuan Li, Ben Moonen, and Ziquan Yang for their valuable comments on an early draft of this manuscript.

\subsection*{Funding information}
K. Ito is supported by JSPS KAKENHI Grant Numbers 24K16887 and 24H00015.
H. Zou is supported by the Deutsche Forschungsgemeinschaft (DFG, German Research Foundation), Project-ID 491392403, TRR 358.

\subsection*{AI declarations}
This project began in early 2026, and most of the results presented here, including the weight--monodromy conjecture and the strong compatibility, were obtained before August 2026 without any assistance from generative AI tools.
Thereafter, Codex and Claude Code were used to assist with exposition, literature searches, statement simplification, and proof audits.
The uses described below bear on the mathematical content.

The second author arrived at the statement of \Cref{lem:global additive rigidity} after completing his project with Zhichao Tang \cite{TZ26}, and used it to rule out the case in which the unipotent radical is isomorphic to the additive group $\GG_a$; at the time he did not realize that a full proof of semisimplicity was within reach.
The reduction of a general unipotent quotient to the case of $\GG_a$, given in \Cref{prop:no-unipotent quotient for global representation}, was pointed out by ChatGPT 5.6 Sol in August 2026, while he was asking about an unrelated general fact on algebraic groups during the Algebraic Geometry Summer School at SCMS, Shanghai.
With the Mumford--Tate conjecture then available in full,
rather than only in its semisimple version,
several technical workarounds could be dropped. In preparing \Cref{Strongly compatible systems with integral coefficients}, the first author used ChatGPT 6 Astra to help clarify how to control the finite extensions of base fields required in the argument.
In particular, the proof of \Cref{Proposition:LLV and Kuga-Satake} and the example in \Cref{ex:nontrivial central inertia character} were found with its help.

All AI outputs bearing on the mathematical content were verified independently by the authors.

\begingroup \sloppy
\printbibliography
\endgroup
\end{document}